\begin{document}
\title{On the Word-Representability of 5-Regular Circulant Graphs}
\titlerunning{On the Word-Representability of 5-Regular Circulant Graphs}
%
\author{Suchanda Roy\thanks{A preliminary version of this work has been accepted for presentation at CALDAM~2026.}\and
Ramesh Hariharasubramanian \Letter}
\authorrunning{S. Roy et al.}
%
\institute{Department of Mathematics, Indian Institute of Technology Guwahati, \\Guwahati, Assam 781039, India\\
\email{r.suchanda@iitg.ac.in}\\
\email{\Letter~ramesh\_h@iitg.ac.in}}
 \maketitle              
\begin{abstract}
A graph $G = (V, E)$ is \emph{word-representable} if there exists a word $w$ over the alphabet $V$ such that, for any two distinct vertices $x, y \in V$, $xy \in E$ if and only if $x$ and $y$ alternate in $w$.  
Two letters $x$ and $y$ are said to \emph{alternate} in $w$ if, after removing all other letters from $w$, the resulting word is of the form $xyxy\dots$ or $yxyx\dots$ (of even or odd length). For a given set \(R = \{r_1, r_2, \dots, r_k\}\) of jump elements, an undirected \emph{circulant graph} \(C_n(R)\) on \(n\) vertices has vertex set 
\(\{0, 1, \dots, n-1\}\) and edge set
$
E = \left\{ \{i,j\} \;\middle|\; |i - j| \bmod n \in \{r_1, r_2, \dots, r_k\} \right\},
$
where \(0 < r_1 < r_2 < \dots < r_k < \frac{n}{2}\).
Recently, Kitaev and Pyatkin proved that every \(4\)-regular circulant graph is word-representable. 
Srinivasan and Hariharasubramanian further investigated circulant graphs and obtained bounds on the representation number for \(k\)-regular circulant graphs with \(2 \le k \le 4\).  
In addition to these positive results, their work also presents examples of non-word-representable circulant graphs. In this work, we study word-representability and the representation number of 5-regular circulant graphs via techniques from elementary number theory and group theory, as well as graph coloring, graph factorization and morphisms.
\keywords{word-representable graph  \and circulant graph \and semi-transitive orientation \and representation number.}
\end{abstract}
\section{Introduction}
The theory of \emph{word-representable graphs} is a rich and promising area of research, with strong connections to algebra, graph theory, combinatorics on words, formal languages, and scheduling problems. This concept was first introduced by Kitaev and Pyatkin in \cite{kitaev2008representable}, based on the study of Kitaev and Seif on the celebrated \emph{Perkins semigroup} \cite{kitaev2008word}. Word-representable graphs generalize several fundamental graph classes, including \emph{circle graphs, comparability graphs, and 3-colorable graphs}. We refer the reader to \cite{kitaev2015words},\cite{kitaev2017comprehensive} for motivation and a detailed discussion of the connections between word-representable graphs and various fields. Notably, determining whether a graph is word-representable or not is a \emph{NP-complete} problem.

The notion of \emph{semi-transitive orientation} (Definition \ref{def}), introduced by Halldórsson in~\cite{halldorsson2016semi}, plays a central role in characterizing \emph{word-representable graphs}. In fact, a graph is word-representable if and only if it admits such an orientation (Theorem~\ref{semi}).

 Circulant graph is an important family in graph theory due to their high degree of symmetry, as they are regular and vertex-transitive. These properties, together with their connections to number theory and applications in areas such as network design, coding theory, and cryptography, make them a natural object of study. These graphs have been extensively investigated with respect to their structural properties, including connectivity, planarity, and factorization. In recent years, word-representability of circulant graphs has attracted attention. In \cite{kitaev2020semi}, Kitaev and Pyatkin showed that every \(4\)-regular circulant graph is 
word-representable, while Srinivasan and Hariharasubramanian examined word-representability of circulant graphs and established bounds on the representation number for \(k\)-regular circulant 
graphs with \(2 \leq k \leq 4\) in \cite{srinivasan2025semi}. Their work also highlights that not all circulant graphs are word-representable, as explicit 
counterexamples are known. The coexistence of such examples motivates us to extend the study to \(5\)-regular circulant graphs.

 This paper presents several contributions to the study of word-representability of 5-regular circulant graphs. We begin by dividing this class into subclasses according to the parity of the jump set elements and analyze their word-representability (Theorem~\ref{odd}, Remark~\ref{even}). Also, we give particular attention to the subclass of 5-regular circulant graphs in which at least one jump set element is relatively prime to the number of vertices. Their word-representability is established in 
Theorems~\ref{nx2}, \ref{nx0}, \ref{n0x1}, \ref{n0x2}, \ref{n1x2}, \ref{n2x1}, as well as in   
Corollaries \ref{cnx1}, \ref{cnxo}, \ref{cnox1}, \ref{cn2x1}, \ref{cnox2}  and 
\ref{cn1x2}.  
Moreover, upper bounds on their representation numbers are provided in 
Theorems~\ref{20}, \ref{21}, and \ref{12n}. We then extend the analysis to the general case of 5-regular circulant graphs (Theorem~\ref{19}, \ref{17},    \ref{6} and \ref{7}). The paper ends with several open problems and directions for future research.

This paper is organized as follows. Section \ref{2} reviews basic results from elementary number theory and group theory, provides the necessary background on word-representability in \ref{2.1}, and introduces circulant graphs along with their key properties in \ref{2.2}.
 In Section \ref{result}, we present our results on the word-representability of 5-regular circulant graphs and give upper bounds on their representation number. Finally, Section \ref{con} concludes the paper with open problems and directions for future research.

\section{Preliminaries}\label{2}
Before presenting the necessary background on word-representability, we first state some basic results and definitions from elementary number theory and group theory that will be used in this work.
\begin{theorem}\label{num}
A linear congruence of the form 
\(
b x \equiv a \pmod{n}
\) 
has a unique solution modulo \(n\) if and only if \(\gcd(b, n) = 1\).

\end{theorem}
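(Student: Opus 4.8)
The plan is to prove the two implications of the biconditional separately, using Bézout's identity to construct a solution and the divisibility properties of the greatest common divisor for uniqueness; throughout I write $d = \gcd(b,n)$.

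For the sufficiency direction I assume $d = 1$. Bézout's identity then supplies integers $u,v$ with $bu + nv = 1$, so that $bu \equiv 1 \pmod n$ and $u$ is a multiplicative inverse of $b$ modulo $n$. Setting $x_0 = au$ gives $b x_0 \equiv a \pmod n$, which establishes existence. For uniqueness, if $x_1$ and $x_2$ are both solutions then $b(x_1 - x_2) \equiv 0 \pmod n$, i.e.\ $n \mid b(x_1 - x_2)$; since $\gcd(b,n) = 1$, Euclid's lemma forces $n \mid (x_1 - x_2)$, whence $x_1 \equiv x_2 \pmod n$. Thus the solution is unique modulo $n$.

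For the necessity direction I argue the contrapositive: assuming $d > 1$, I show the congruence can never have exactly one solution. The key point is that $t = n/d$ satisfies $0 < t < n$ and $bt = (b/d)\,n \equiv 0 \pmod n$, so $t$ is a nonzero residue annihilated by multiplication by $b$ modulo $n$. Hence if some $x_0$ solves the congruence then $x_0 + t$ is a second, distinct solution, so a unique solution is impossible; and if no solution exists, uniqueness fails as well. In either case the existence of a unique solution forces $d = 1$.

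These steps are all standard, so the only point demanding genuine care is the necessity direction, where one must exhibit a concrete nonzero element $t$ with $bt \equiv 0 \pmod n$ and verify $0 < t < n$ — precisely the place where the hypothesis $d > 1$ is used. Equivalently, one may package the whole argument group-theoretically: multiplication by $b$ is an endomorphism of the finite additive group $\mathbb{Z}_n$ whose kernel has exactly $d$ elements, and an endomorphism of a finite group is a bijection if and only if its kernel is trivial, so a unique solution exists precisely when $d = 1$.
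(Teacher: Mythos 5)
Your proof is correct and complete: the Bézout/Euclid argument for sufficiency and the explicit nonzero annihilator $t = n/d$ for necessity (covering both the no-solution and multiple-solution cases) together establish the biconditional. The paper states this as a standard preliminary from elementary number theory and gives no proof of its own, so there is nothing to compare against; your argument is the standard one and would serve as a proof here.
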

\begin{definition}
Let \(H\) be a group and \(a \in H\). The \emph{order} of \(a\), denoted \(o(a)\), is the smallest positive integer \(m\) such that 
$
a^m = e,
$ 
where \(e\) is the identity element of \(H\). If no such \(m\) exists, \(a\) is said to have infinite order.
\end{definition}

\begin{proposition}
Let \(a \in H\) be an element of finite order \(o(a)\), and let \(x\) be a positive integer. Then the order of \(a^x\) is given by
$
o(a^x) = \frac{o(a)}{\gcd(o(a), x)}.
$
\end{proposition}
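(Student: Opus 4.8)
The plan is to establish the two divisibility relations \(o(a^x) \mid \frac{o(a)}{\gcd(o(a),x)}\) and \(\frac{o(a)}{\gcd(o(a),x)} \mid o(a^x)\), from which the desired equality follows immediately. To keep the notation light I would write \(n = o(a)\) and \(d = \gcd(n,x)\) throughout, and set \(m = o(a^x)\), the quantity we wish to identify with \(n/d\). The whole argument rests on the standard characterization of order: \(g^k = e\) if and only if \(o(g) \mid k\).

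First I would verify that \((a^x)^{n/d} = e\). Since \(d \mid x\), the exponent \(x \cdot (n/d) = (x/d)\cdot n\) is an integer multiple of \(n\), so \((a^x)^{n/d} = (a^n)^{x/d} = e^{x/d} = e\). Applying the order characterization to \(g = a^x\) then gives \(m \mid n/d\), the first half of the claim.

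For the reverse divisibility I would start from the defining relation \((a^x)^m = e\), i.e. \(a^{xm} = e\). The order characterization applied to \(a\) yields \(n \mid xm\). Now I factor out \(d\): writing \(n = d(n/d)\) and \(x = d(x/d)\), the relation \(n \mid xm\) becomes \((n/d) \mid (x/d)\,m\). The crucial point is that \(\gcd(n/d,\, x/d) = 1\), since dividing \(n\) and \(x\) by their greatest common divisor always produces coprime quotients; Euclid's lemma then lets me cancel the factor \(x/d\) and conclude \((n/d) \mid m\).

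Combining \(m \mid n/d\) with \((n/d) \mid m\) forces \(m = n/d\), which is exactly the assertion \(o(a^x) = \frac{o(a)}{\gcd(o(a),x)}\). The only step that needs more than direct substitution into the order axiom is the cancellation in the reverse direction: without the coprimality of \(n/d\) and \(x/d\) one could only deduce the weaker \((n/d) \mid (x/d)m\). I therefore expect this coprimality-plus-Euclid argument to be the main (albeit still routine) obstacle, while the forward divisibility and the final combination are purely mechanical.
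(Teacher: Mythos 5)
Your proof is correct. The paper states this proposition as a standard preliminary from elementary group theory and gives no proof of its own, so there is nothing to diverge from; your two-sided divisibility argument --- showing \((a^x)^{n/d}=e\) to get \(o(a^x)\mid n/d\), then using \(n\mid x\,o(a^x)\) together with the coprimality of \(n/d\) and \(x/d\) (and the generalized Euclid lemma) to get the reverse divisibility --- is exactly the standard textbook proof and is complete as written.
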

\begin{definition}
A group \(H\) is called \emph{cyclic} if there exists an element \(g \in H\) such that every element of \(H\) can be written as a power of \(g\); that is,
$
H = \{ g^k \mid k \in \mathbb{Z} \}.
$
In this case, \(g\) is called a \emph{generator} of \(H\).
\end{definition}

\begin{proposition}
Let \(H = \langle g \rangle\) be a cyclic group of order \(n\). An element \(g^k \in H\) is a generator of \(H\) if and only if
$
\gcd(k, n) = 1.
$
\end{proposition}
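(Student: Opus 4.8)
The plan is to reduce the generator condition to a statement about the order of $g^k$ and then invoke the order formula stated in the proposition immediately preceding this one. First I would recall the standard fact that for any $h \in H$ the cyclic subgroup $\langle h \rangle$ has exactly $o(h)$ elements; consequently $h$ generates $H$ (that is, $\langle h \rangle = H$) precisely when $o(h) = |H| = n$. Specializing to $h = g^k$, the target equivalence ``$g^k$ is a generator $\iff \gcd(k,n) = 1$'' is thereby transformed into the purely arithmetic equivalence ``$o(g^k) = n \iff \gcd(k,n) = 1$.''

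Next I would compute $o(g^k)$ directly from the order formula $o(a^x) = o(a)/\gcd(o(a),x)$. Since $H = \langle g \rangle$ has order $n$ and $|\langle g\rangle| = o(g)$, the generator $g$ itself satisfies $o(g) = n$, so the formula yields $o(g^k) = n/\gcd(n,k)$. From this single expression both implications drop out simultaneously: $o(g^k) = n$ holds exactly when $\gcd(n,k) = 1$, which is precisely the claimed characterization. No case analysis is needed once the formula is in hand.

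As an alternative route that leans on Theorem~\ref{num} rather than the order formula, I could argue that $g^k$ generates $H$ if and only if $g \in \langle g^k \rangle$, i.e.\ if and only if there exists an integer $x$ with $g^{kx} = g$. Because $o(g) = n$, this is equivalent to the solvability of the linear congruence $kx \equiv 1 \pmod{n}$, which by Theorem~\ref{num} holds if and only if $\gcd(k,n) = 1$. This phrasing has the pedagogical advantage of tying the proposition back to the number-theoretic machinery introduced earlier in the section.

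Since both of the tools required are already available in the excerpt, there is no substantial obstacle to overcome here. The only point that genuinely requires care is the preliminary equivalence ``$h$ generates $H \iff o(h) = n$,'' which rests on the identity $|\langle h \rangle| = o(h)$ and must be justified before the order formula (or the congruence argument) can be brought to bear; everything after that is immediate.
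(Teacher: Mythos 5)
Your argument is correct. The paper states this proposition as a background fact in the preliminaries without supplying a proof, so there is nothing to compare against; your route via the order formula $o(g^k)=o(g)/\gcd(o(g),k)=n/\gcd(n,k)$ (which the paper does record as the preceding proposition) is the standard one, and you rightly isolate the only step needing justification, namely that $h$ generates $H$ iff $o(h)=|H|$, which follows from $|\langle h\rangle|=o(h)$. The alternative argument via Theorem~\ref{num} is also sound and fits the paper's toolkit.
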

 We have applied these results to the cyclic group \((\mathbb{Z}_{2n},+)\) in Theorems~\ref{15} and~\ref{17}. As \((\mathbb{Z}_{2n},+)\) is an additive group, instead of $g^k$ we write $k.g$, where $g$ is a generator of \((\mathbb{Z}_{2n},+)\). In the following, we present a brief overview of word-representability.

\subsection{Word-Representable Graphs}\label{2.1}
Throughout this paper, we assume that all graphs are finite, simple, and undirected. For a graph \(G\), we denote its vertex set by \(V(G)\) and its edge set by \(E(G)\). For vertices \(x, y \in V(G)\), we write \(x \sim y\) if there is an edge between \(x\) and \(y\), and \(x \nsim y\) if there is no edge between them.
We begin this section with the definition of word-representable graphs.

\begin{definition}[\cite{kitaev2015words}]  A simple graph $G = (V, E)$ is \textit{word-representable} if there exists a word $w$ over the alphabet $V$ such that letters $x$ and $y$, $\{x,y\}\in V$ alternate in $w$ if and only if $xy \in E$, i.e., $x$ and $y$ are adjacent for each $x\not=y$. If a word $w$ \textit{represents} $G$, then $w$ contains each letter of $V(G)$ at least once.
\end{definition}
\begin{example}
    For instance, the word $1234$ represents the complete graph $K_4$ on four vertices. More generally, any permutation of $n$ distinct symbols represents the complete graph $K_n$ on $n$ vertices. Also, $162132435465$ represents $C_6$, the cycle graph on 6 vertices.

\end{example}
\begin{proposition}[\cite{kitaev2015words}]\label{pr1}
 		Let $w = uv$ be a $k$-uniform word representing a graph $G$, where $u$ and $v$ are two, possibly empty, words. Then, the word $w' = vu$ also represents $G$.
\end{proposition}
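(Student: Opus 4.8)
The plan is to prove the proposition by showing that, for every pair of distinct vertices $x, y \in V(G)$, the letters $x$ and $y$ alternate in $w = uv$ if and only if they alternate in $w' = vu$. Once this biconditional is established for all pairs, the representation transfers directly: since $w$ represents $G$, adjacency $xy \in E$ is equivalent to alternation in $w$, which is equivalent to alternation in $w'$, so $w'$ represents $G$ as well.

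First I would fix an arbitrary pair $\{x,y\}$ and let $p$ denote the subword of $u$ obtained by deleting every letter other than $x$ and $y$, and let $q$ denote the analogous subword of $v$. Deleting all letters besides $x$ and $y$ from $w = uv$ then yields $pq$, while the same deletion applied to $w' = vu$ yields $qp$. The whole question thus reduces to comparing the alternation of $pq$ with that of its cyclic rotation $qp$.

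The key observation is that $k$-uniformity forces $x$ and $y$ each to occur exactly $k$ times, so $pq$ has even length $2k$. If $pq$ alternates, it must therefore be precisely $(xy)^k$ or $(yx)^k$. I would then argue, by splitting on the parity of $|p|$, that the rotation $qp$ is again one of $(xy)^k$ or $(yx)^k$: when $|p|$ is even the two blocks recombine into $(xy)^k$ (respectively $(yx)^k$), and when $|p|$ is odd a single leading/trailing letter is carried across, turning $(xy)^k$ into $(yx)^k$ and vice versa. In every case $qp$ alternates. The converse follows by symmetry, since $pq$ is itself the cyclic rotation of $qp$ by $|q|$ positions, so the roles of $w$ and $w'$ are interchangeable.

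The step carrying the actual content, and the only place where the hypothesis is used, is the even-length observation: for an odd-length alternating word a cyclic rotation need not alternate (for instance $xyx$ rotates to $yxx$), so $k$-uniformity is exactly what rules this out. I expect the minor case analysis on the parity of $|p|$ to be the main, though entirely routine, obstacle; once it is dispatched, the biconditional holds for every pair $\{x,y\}$ and the proposition follows.
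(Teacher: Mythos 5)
Your argument is correct: restricting to the two-letter subword, noting that $k$-uniformity forces it to have even length $2k$ so that an alternating instance must be $(xy)^k$ or $(yx)^k$, and checking that the block swap $pq \mapsto qp$ preserves this form (with the parity of $|p|$ only deciding which of the two forms results) is exactly the standard proof of this fact. The paper itself states this proposition as a cited result from \cite{kitaev2015words} without reproducing a proof, and your argument matches the one given there, so there is nothing further to reconcile.
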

 \begin{definition}[\cite{kitaev2015words}]
A word $w$ contains a word $u$ as a \textit{factor} if $w = xuy$ where $x$ and $y$ can be empty words. 
\end{definition}
\begin{definition}[\cite{kitaev2015words}]
A \textit{subword} of a word $w$ is a word obtained by removing certain letters from $w$. In a word $w$, if $x$ and $y$ alternate, then $w$ contains $xyxy\cdots$ or $yxyx\cdots$ (odd or even length) as a subword.
\end{definition}
\begin{example}
    The word $1241154325$ contains the words $154$ and $24$ as factors, while $24142$ is a subword of it.
\end{example}
\begin{definition}
     [\cite{kitaev2015words}] \textit{$k$-uniform word} A word $w$ is said to be $k$-uniform word if every letter occurs exactly $k$ times in it.
 \end{definition}

 \begin{definition}[\cite{kitaev2015words}] A graph is said to be $k$-word-representable if there exists a $k$-uniform word representing it.
 \end{definition}

 \begin{definition} [\cite{kitaev2017comprehensive}]
     For a word-representable graph $G$, \textit{the representation number} is the least $k$ such that $G$ is $k$-representable and it is denoted by $R(G)$.
 \end{definition}
 The following Theorem gives the representation number of cycle graph.
 \begin{theorem}[\cite{kitaev2015words}]\label{cycle}
Let \(G\) be a cycle graph. Then the representation number of \(G\) is 2.
\end{theorem}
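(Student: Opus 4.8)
The plan is to prove the equality $R(G)=2$ by bounding the representation number from both sides: showing that no $1$-uniform word can represent a cycle, so that $R(G)\ge 2$, and then exhibiting an explicit $2$-uniform word, so that $R(G)\le 2$. For the lower bound I would argue directly from the definition: in a $1$-uniform word every letter occurs exactly once, so after deleting all other letters any two distinct letters $x,y$ reduce to either $xy$ or $yx$, and hence \emph{automatically} alternate. Thus a $1$-uniform word can represent only a complete graph. A cycle $C_n$ on $n\ge 4$ vertices contains non-adjacent pairs (for instance two vertices at distance $2$ along the cycle), so it is not complete and admits no $1$-uniform representation; therefore $R(G)\ge 2$. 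The degenerate case $C_3=K_3$ is complete and is understood separately, so the statement is read for $n\ge 4$.

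For the upper bound I would produce an explicit construction. Labelling the vertices $1,2,\dots,n$ cyclically so that $i\sim i+1$ with indices taken modulo $n$, I propose the word
\[
w = 1\,n\,2\,1\,3\,2\,4\,3\,\cdots\,n\,(n-1),
\]
obtained by concatenating the length-two blocks $(1,n),(2,1),(3,2),\dots,(n,n-1)$. Each vertex appears exactly twice, once as the first and once as the second letter of a block, so $w$ is $2$-uniform. This is precisely the word $162132435465$ given for $C_6$ in the example above, specialised to general $n$.

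The core of the argument, and the step demanding the most care, is verifying that $x$ and $y$ alternate in $w$ if and only if $\{x,y\}\in E(C_n)$. I would carry this out by recording the two positions occupied by each letter when $w$ is read left to right: vertex $1$ occupies positions $\{1,4\}$, vertex $i$ occupies $\{2i-1,\,2i+2\}$ for $2\le i\le n-1$, and vertex $n$ occupies $\{2,\,2n-1\}$. With these position sets the verification becomes a comparison of intervals: for two adjacent vertices the occurrences interleave, producing the pattern $xyxy$, whereas for non-adjacent vertices both occurrences of one letter lie entirely before those of the other, producing $xxyy$. The only subtlety is the pair of ``wrap-around'' edges incident to vertex $n$, namely $\{n,1\}$ and $\{n,n-1\}$, because vertex $n$ has its first occurrence near the start of $w$; a direct check (using $2n-1>4$ for $n\ge 4$) confirms that these two pairs alternate while $\{n,j\}$ for $2\le j\le n-2$ does not. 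Combining the lower and upper bounds gives $R(G)=2$.
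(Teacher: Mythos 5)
Your proof is correct. Note that the paper does not prove this statement at all: Theorem~2 is quoted from the book of Kitaev and Lozin \cite{kitaev2015words} as a known result, so there is no in-paper argument to compare against. Your two-sided argument is the standard one: the lower bound $R(C_n)\ge 2$ for $n\ge 4$ follows because a $1$-uniform word forces every pair of letters to alternate and hence represents only complete graphs, and your $2$-uniform word $1\,n\,2\,1\,3\,2\,\cdots\,n\,(n-1)$ is exactly the construction underlying the paper's example $162132435465$ for $C_6$. The position bookkeeping checks out (vertex $1$ at $\{1,4\}$, vertex $i$ at $\{2i-1,2i+2\}$ for $2\le i\le n-1$, vertex $n$ at $\{2,2n-1\}$). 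One small imprecision: for the non-adjacent pairs $\{n,j\}$ with $2\le j\le n-2$ the two occurrences of $j$ are \emph{nested} between those of $n$ (pattern $n\,j\,j\,n$) rather than one letter's occurrences preceding the other's, but this still fails to alternate, and you do verify these pairs separately, so the conclusion stands.
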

In the following, we state one of the key developments in the study of word-representable graphs, namely its characterization in terms of semi-transitive orientations, which is defined based on shortcuts in \cite{halldorsson2016semi}.
\begin{definition}[\cite{kitaev2015words}]
    A \emph{semi-cycle} is the directed acyclic graph obtained by reversing the direction of one edge of a directed cycle.
\end{definition}
\begin{definition}[\cite{halldorsson2016semi}]\label{def}
    An acyclic digraph is called a \emph{shortcut} if it is induced by the vertices of a semi-cycle and contains a pair of non-adjacent vertices. In particular, any shortcut  

\begin{itemize}
    \item is acyclic (that is, it has no directed cycles);
    \item contains at least four vertices;
    \item has exactly one source (a vertex with no incoming edges) and exactly one sink (a vertex with no outgoing edges), with a directed path from the source to the sink passing through every vertex;
    \item includes an edge connecting the source to the sink, called the \emph{shortcutting edge};
    \item is not transitive (that is, there exist vertices $u$, $v$, and $z$ such that $u \to v$ and $v \to z$ are edges, but $u \to z$ is not).
\end{itemize}

\begin{definition}[\cite{halldorsson2016semi}]
An orientation of a graph is \emph{semi-transitive} if it is acyclic and shortcut-free.
\end{definition}
 
\end{definition}
\begin{theorem}[\cite{halldorsson2016semi}]\label{semi}
A graph \(G\) is word-representable if and only if it admits a semi-transitive orientation.
\end{theorem}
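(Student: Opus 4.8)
The plan is to prove the two implications of the biconditional separately, constructing the required object explicitly in each direction. For the forward implication (word-representable $\Rightarrow$ semi-transitive orientation), I would start from a word $w$ representing $G$ (possibly after reducing to a $k$-uniform word, which is harmless and simplifies the position bookkeeping) and orient every edge $\{a,b\}$ of $G$ from $a$ to $b$ precisely when the first occurrence of $a$ precedes that of $b$ in $w$; equivalently, when $w$ restricted to $\{a,b\}$ begins with $a$. Writing $f(v)$ for the position of the first occurrence of $v$, every directed edge $a \to b$ satisfies $f(a) < f(b)$. Since $f$ is injective and strictly increasing along directed paths, no directed cycle can exist, so the orientation is acyclic.

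The substantive part of this direction is proving that the orientation is shortcut-free, and I would argue by contradiction. Assuming a shortcut exists, Definition~\ref{def} already supplies a source $x$, a sink $y$, the shortcutting edge $x \to y$, and a directed Hamiltonian path $x = p_0 \to p_1 \to \cdots \to p_m = y$ through all its vertices, together with a non-adjacent pair forced by non-transitivity; passing to a vertex-minimal shortcut further constrains which chords $p_i \to p_j$ are present. The goal is to show that the alternation pattern imposed by $w$ along the path, combined with the alternation of the shortcutting edge $x \to y$, forces the non-adjacent pair to alternate in $w$ — contradicting that it is a non-edge. I expect this alternation bookkeeping, controlling the interleaving of occurrences along the path relative to the endpoints $x$ and $y$, to be the main obstacle; minimality of the shortcut should let me reduce to a tractable configuration, with the four-vertex pattern $x \to p \to q \to y$ and edge $x \to y$ serving as the base instance.

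For the reverse implication (semi-transitive orientation $\Rightarrow$ word-representable), I would build a representing word directly from the orientation. Using acyclicity, I would first fix a topological order and stratify the vertices into levels $L_0, L_1, \ldots$ by longest-path length, observing that each level is an independent set (an edge would connect vertices of strictly increasing level). I would then construct the word inductively — for instance, by deleting a source, representing the smaller semi-transitively oriented graph, and reinserting the deleted vertex — where shortcut-freeness is exactly the property that guarantees the reinsertion can be performed so that the new vertex alternates with precisely its out-neighbours and with no non-neighbour. Verifying that the resulting word creates the correct alternations for every pair, and crucially creates no spurious alternation across a non-edge, is the core difficulty here, and it is precisely where the absence of shortcuts must be invoked rather than mere acyclicity, since an arbitrary acyclic orientation would not suffice — not every graph is word-representable.
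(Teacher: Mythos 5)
First, note that the paper does not prove this theorem at all: it is quoted from the cited reference of Halld\'orsson, Kitaev and Pyatkin, so your attempt can only be measured against the original proof there. Against that benchmark, your forward direction has the right setup (pass to a $k$-uniform word, orient each edge toward the letter whose first occurrence comes later, observe acyclicity via the injection $v\mapsto f(v)$), but the step you defer as ``alternation bookkeeping'' is the entire content, and the way to close it is a concrete interleaving lemma rather than a minimality reduction: in a $k$-uniform word, if $x$ and $y$ alternate and the first occurrence of $x$ precedes that of $y$, then for every $m$ the $m$-th occurrence of $x$ precedes the $m$-th occurrence of $y$, which precedes the $(m+1)$-st occurrence of $x$. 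Applying this along the directed Hamiltonian path $v_1\to v_2\to\cdots\to v_t$ of a putative shortcut gives $v_1^{(m)}<v_2^{(m)}<\cdots<v_t^{(m)}$ for each $m$, and applying it to the shortcutting edge $v_1\to v_t$ gives $v_t^{(m)}<v_1^{(m+1)}$; chaining the two yields $v_i^{(m)}<v_j^{(m)}<v_i^{(m+1)}$ for all $i<j$, so every pair on the path alternates, contradicting the non-adjacent pair required by Definition~\ref{def}. No passage to a vertex-minimal shortcut or four-vertex base case is needed, and it is not clear the reduction you propose is even available, since an induced subdigraph of a shortcut need not itself be a shortcut.

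The reverse direction is where the genuine gap lies. Your assertion that ``shortcut-freeness is exactly the property that guarantees the reinsertion can be performed'' is precisely the theorem, and you supply no mechanism for it. The inductive scheme ``delete a source, represent $G-v$, reinsert $v$'' does not go through as stated: given an arbitrary word $w'$ representing $G-v$, there is in general no way to insert occurrences of a single new letter so that it alternates with exactly a prescribed subset of letters and with nothing else, even after cyclic shifts or passing to a uniform word, so the induction hypothesis is too weak to carry the step. The published proof is instead a global construction that builds a uniform representing word (of multiplicity controlled by the maximum degree) directly from a topological order of the semi-transitive orientation, using shortcut-freeness to show that all unwanted alternations can be destroyed simultaneously while all edges remain alternating. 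Your level decomposition into independent sets uses only acyclicity and, as you acknowledge, cannot by itself distinguish semi-transitive orientations from arbitrary acyclic ones. As it stands, the proposal is a plan with both hard steps left open rather than a proof.
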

\begin{remark}\label{bip}
The notion of semi-transitive orientation generalizes the classical concept of transitive orientation, where an orientation is \emph{transitive} if the presence of \(u \rightarrow v\) and \(v \rightarrow z\) implies \(u \rightarrow z\). Graphs admitting such an orientation are called \emph{comparability graphs}. In particular, path graphs, bipartite graphs and complete graphs admit transitive orientations, and therefore they are word-representable.
\end{remark}
\begin{theorem}[\cite{kitaev2015words}]\label{3-col}
 If a graph $G$ is 3-colorable, then it is word-representable.
\end{theorem}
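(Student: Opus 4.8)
The plan is to invoke the characterization in Theorem~\ref{semi}: it suffices to exhibit a semi-transitive orientation of $G$. The natural candidate comes directly from a proper coloring. So first I would fix a proper $3$-coloring $c : V(G) \to \{1,2,3\}$, which exists by hypothesis, and orient every edge $\{x,y\} \in E(G)$ from the endpoint of smaller color to the endpoint of larger color, i.e.\ $x \to y$ whenever $c(x) < c(y)$. This is well-defined precisely because a proper coloring forbids $c(x) = c(y)$ on an edge, so every edge receives exactly one orientation.

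Next I would verify acyclicity. Along any directed edge the color strictly increases, so along any directed path the sequence of colors is strictly increasing. A directed cycle would force a vertex's color to strictly exceed itself, which is impossible; hence the orientation is acyclic. The same monotonicity observation is the engine of the whole argument: since the colors come from a set of size $3$ and must strictly increase along a directed path, every directed path contains at most three vertices, and therefore has length at most $2$.

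The crux is then to rule out shortcuts. By Definition~\ref{def}, any shortcut is an acyclic digraph on at least four vertices possessing a directed path from its unique source to its unique sink that passes through \emph{every} vertex; such a path has at least four vertices and hence length at least $3$. But our orientation admits no directed path on more than three vertices, so no induced subgraph of it can contain such a spanning path. Consequently no shortcut can occur, and the orientation is shortcut-free; being also acyclic, it is semi-transitive. Applying Theorem~\ref{semi} then yields that $G$ is word-representable.

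I expect the only genuinely delicate point to be the bookkeeping at the shortcut step, namely making explicit that a shortcut's defining property forces a directed Hamiltonian path \emph{of the shortcut} on $\ge 4$ vertices, which directly contradicts the length-$2$ bound; everything else (well-definedness of the orientation and acyclicity) is immediate from properness of the coloring and the strict monotonicity of colors along arcs. No heavy computation is required, and the short diameter of directed paths forced by having only three colors is what makes the obstruction collapse.
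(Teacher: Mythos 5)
Your proof is correct and is precisely the standard argument for this result: orient edges from smaller to larger color, note that only three colors forces every directed path to have length at most $2$, and observe that a shortcut would require a source-to-sink path through at least four vertices, hence of length at least $3$. The paper states this theorem as a cited result from the literature without reproving it, and your argument matches the known proof.
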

The above theorem establishes a connection between colorability and word-representa-\\bility, which plays an important role in the proofs of Theorems~\ref{19} and~\ref{17}. Furthermore, the following theorem allows us to restrict the study of word-representable graphs to connected graphs.

\begin{theorem}[\cite{kitaev2015words}]\label{conn}
Let \(G\) be a graph with connected components \(G_1, G_2, \dots, G_k\). Then \(G\) is word-representable if and only if each \(G_i\) is word-representable. Moreover, the representation number of \(G\) satisfies
\(
R(G) = \max\{R(G_1), R(G_2), \dots, R(G_k)\}.
\)
\end{theorem}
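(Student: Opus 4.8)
The plan is to prove the word-representability equivalence and the identity $R(G)=\max_i R(G_i)$ together, using two elementary operations on words --- \emph{restriction} (deleting all occurrences of a chosen set of letters) and \emph{concatenation} --- together with a monotonicity lemma asserting that a $j$-word-representable graph is also $(j+1)$-word-representable. The guiding fact throughout is that whether two letters alternate in a word depends only on the subword obtained by deleting all other letters, so alternation behaves predictably under both operations.

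For the forward implication and the lower bound $\max_i R(G_i)\le R(G)$, I would take an $r$-uniform word $w$ representing $G$ with $r=R(G)$ and restrict it to $V(G_i)$, obtaining $w_i$. Every vertex of $G_i$ still occurs exactly $r$ times, so $w_i$ is $r$-uniform; and for $x,y\in V(G_i)$ the subword of $w_i$ on $\{x,y\}$ coincides with that of $w$, whence $x,y$ alternate in $w_i$ if and only if $xy\in E(G)$, equivalently $xy\in E(G_i)$ (adjacency within a component is the same in $G$ and in $G_i$). Thus $w_i$ represents $G_i$, so each component is word-representable and $R(G_i)\le r=R(G)$, giving $\max_i R(G_i)\le R(G)$.

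For the reverse implication and the upper bound I would concatenate padded component words. Set $m=\max\{2,\max_i R(G_i)\}$ and, for each $i$, choose an $m$-uniform word $w_i$ representing $G_i$. Producing words of this common length requires the monotonicity lemma, which I would establish via the \emph{initial permutation}: if $u$ is a $j$-uniform word representing a graph and $\pi(u)$ lists the vertices in order of first occurrence in $u$, then $\pi(u)\,u$ is a $(j+1)$-uniform word representing the same graph, since a short case check on each pair $\{x,y\}$ shows that prepending $\pi(u)$ preserves both the alternating and the non-alternating status. Now let $w=w_1w_2\cdots w_k$. Two vertices of a common component have all their occurrences confined to one block, so they alternate in $w$ exactly as in the relevant $w_i$; whereas for $x\in V(G_i)$ and $y\in V(G_j)$ with $i<j$ the subword on $\{x,y\}$ is $x^{m}y^{m}$, which fails to alternate because $m\ge 2$, matching the non-adjacency of vertices in distinct components. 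Hence $w$ is $m$-uniform and represents $G$, so $R(G)\le m$.

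Combining the bounds gives $R(G)=\max_i R(G_i)$ whenever that maximum is at least $2$. The sole delicate point is the degenerate case $\max_i R(G_i)=1$, in which every component is complete and $G$ is a disjoint union of cliques: such a $G$ is not $1$-representable once disconnected (a $1$-uniform word is a permutation and forces every pair to alternate), while the concatenation above already yields a $2$-representation, so $R(G)=2$. I expect the monotonicity lemma to be the main obstacle, as it is precisely what allows all component words to be taken at a single common uniform length --- without it, concatenation would neither be uniform nor determine the representation number.
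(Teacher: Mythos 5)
The paper states this result as a citation from \cite{kitaev2015words} and gives no proof of its own, so there is nothing internal to compare against; your argument is the standard one (restriction of a representing word to a component for necessity and the lower bound, the initial-permutation monotonicity lemma plus concatenation of $m$-uniform component words for sufficiency and the upper bound) and it is correct. Your closing caveat is also right and worth keeping: as literally stated the identity $R(G)=\max\{R(G_1),\dots,R(G_k)\}$ fails when $k\ge 2$ and every component is a clique, since then each $R(G_i)=1$ while a $1$-uniform word is a permutation and necessarily represents a complete graph, forcing $R(G)=2$; the equality should therefore be read with this degenerate case excluded (or with the right-hand side replaced by $\max\{2,\max_i R(G_i)\}$ for disconnected $G$).
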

In the following, we present results on the word-representability of Cartesian products of graphs. We begin with the definition of the Cartesian product.

\begin{definition}
Let $G_{1} = (V(G_{1}), E(G_{1}))$ and $G_{2} = (V(G_{2}), E(G_{2}))$ be two graphs.  
The \emph{Cartesian product} $G_{1} \square G_{2}$ is the graph defined as follows:
\begin{itemize}
    \item The vertex set is $V(G_{1} \square G_{2}) = V(G_{1}) \times V(G_{2})$.
    \item Two vertices $(u_{1}, v_{1})$ and $(u_{2}, v_{2})$ are adjacent in $G_{1} \square G_{2}$ if and only if either
    \begin{enumerate}
        \item $u_{1} = u_{2}$ and $v_{1}$ is adjacent to $v_{2}$ in $G_{2}$, or
        \item $v_{1} = v_{2}$ and $u_{1}$ is adjacent to $u_{2}$ in $G_{1}$.
    \end{enumerate}
\end{itemize}
\end{definition}

\begin{theorem}[\cite{kitaev2015words}]\label{carts}
If $G_{1}$ and $G_{2}$ are word-representable graphs, then their Cartesian product $G_{1} \square G_{2}$ is also word-representable.
\end{theorem}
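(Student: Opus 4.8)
The plan is to prove this through the semi-transitive orientation characterization of Theorem~\ref{semi}: starting from semi-transitive orientations of the two factors, I would construct one for the product. Since $G_1$ and $G_2$ are word-representable, I fix semi-transitive orientations $O_1$ and $O_2$ together with linear extensions (topological orderings) $f_1 : V(G_1) \to \mathbb{Z}$ and $f_2 : V(G_2) \to \mathbb{Z}$ satisfying $f_i(x) < f_i(y)$ whenever $x \to y$ in $O_i$. I call an edge of $G_1 \square G_2$ \emph{horizontal} if its endpoints share the second coordinate (so it arises from an edge of $G_1$) and \emph{vertical} if they share the first coordinate (arising from an edge of $G_2$). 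I then orient each horizontal edge between $(u_1,v)$ and $(u_2,v)$ exactly as $O_1$ orients $u_1 u_2$, and each vertical edge between $(u,v_1)$ and $(u,v_2)$ exactly as $O_2$ orients $v_1 v_2$, obtaining an orientation $O$.

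Acyclicity is the routine part. I introduce the potential $F(u,v) = f_1(u) + f_2(v)$: along a horizontal edge the second coordinate is fixed and $f_1$ strictly increases, while along a vertical edge the first coordinate is fixed and $f_2$ strictly increases, so $F$ strictly increases along every directed edge of $O$. A directed cycle would force $F$ to return to its starting value while strictly increasing, which is impossible; hence $O$ is acyclic.

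The heart of the argument, and the step I expect to be the main obstacle, is showing that $O$ is shortcut-free. I would argue by contradiction: suppose $O$ contains a shortcut (Definition~\ref{def}), given by a directed path $w_0 \to w_1 \to \cdots \to w_m$ passing through all its vertices together with the shortcutting edge $w_0 \to w_m$, and failing to be transitive. The key observation is that every edge of this path must be of the same type as the shortcutting edge. Indeed, if the shortcutting edge is horizontal, then $w_0 = (a,c)$ and $w_m = (b,c)$ share their second coordinate, so $f_2(w_0) = f_2(w_m)$; but along the path $f_2$ is non-decreasing (unchanged on horizontal edges, strictly increasing on vertical ones), and equality of the endpoint values forces every edge to be horizontal. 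The whole configuration therefore lies in the single copy $\{(u,c) : u \in V(G_1)\}$, within which all edges are horizontal and correspond bijectively to edges of $G_1$; projecting onto the first coordinate thus carries the configuration, as a digraph, isomorphically onto an induced shortcut of $O_1$, contradicting the semi-transitivity of $O_1$. The case of a vertical shortcutting edge is symmetric, using $f_1$ and $O_2$. Hence $O$ admits no shortcut and is semi-transitive, so by Theorem~\ref{semi} the graph $G_1 \square G_2$ is word-representable.
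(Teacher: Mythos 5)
Your proof is correct, and since the paper only cites Theorem~\ref{carts} from the literature without reproving it, there is no internal proof to diverge from; your argument is in fact the standard one behind the cited result. Orienting each copy of $G_1$ and $G_2$ by fixed semi-transitive orientations, using the additive potential $f_1+f_2$ for acyclicity, and observing that equality of the relevant coordinate at the two ends of a shortcutting edge forces the entire shortcut into a single copy of one factor is exactly the right mechanism, and every step (including the injectivity of the projection and the transfer of the non-adjacent pair) checks out.
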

\begin{theorem}[\cite{srinivasan2024minimum}]\label{rcarts}
Let $G_{1}$ and $G_{2}$ be two word-representable graphs with their representation numbers $r_{1}$ and $r_{2}$, respectively.  
Then the Cartesian product $G_{1} \square G_{2}$ is $(r_{1} + r_{2} + \min\{|G_{1}|, |G_{2}|\})$-representable, i.e, $R(G)\leq (r_{1} + r_{2} + \min\{|G_{1}|, |G_{2}|\}) $  
\end{theorem}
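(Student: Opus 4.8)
The plan is to prove the bound by exhibiting an explicit $(r_1+r_2+\min\{|G_1|,|G_2|\})$-uniform word that represents $G_1\square G_2$, since producing a $k$-uniform representation is exactly what an upper bound of $k$ on the representation number requires. Assume without loss of generality that $n:=|G_2|\le|G_1|=:m$, write $V(G_1)=\{1,\dots,m\}$ and $V(G_2)=\{1,\dots,n\}$, and identify a vertex of the product with a pair $(i,j)$. Fix an $r_1$-uniform word $w_1$ representing $G_1$ and an $r_2$-uniform word $w_2$ representing $G_2$. The target word is a concatenation $W=A\cdot B\cdot C$ of three blocks in which every letter occurs $r_1$, $r_2$ and $n$ times respectively, so that $W$ is $(r_1+r_2+n)$-uniform with $n=\min\{|G_1|,|G_2|\}$.

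Block $A$ is obtained by substituting each letter $i$ of $w_1$ with the string $(i,1)(i,2)\cdots(i,n)$. Then two vertices sharing a second coordinate, $(i_1,j)$ and $(i_2,j)$, occur in $A$ in the relative order of $i_1$ and $i_2$ in $w_1$; hence they alternate throughout $A$ exactly when $i_1\sim i_2$ in $G_1$, while a $G_1$-non-edge produces a repeated letter in $A$ that no suffix can ever repair. Symmetrically, block $B$ substitutes each letter $j$ of $w_2$ with $(1,j)(2,j)\cdots(m,j)$, so that two vertices sharing a first coordinate alternate throughout $B$ precisely when their second coordinates are adjacent in $G_2$, a $G_2$-non-edge again producing a repeated letter. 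Thus every same-coordinate non-edge is already destroyed inside $A$ or $B$, and every genuine edge is alternating inside the block that concerns it. The subtle point is that such an edge must in addition alternate across the $A$–$B$ junction; I would secure this by first normalising $w_1$ and $w_2$, using the cyclic-shift freedom of Proposition~\ref{pr1} together with a relabelling of $V(G_1),V(G_2)$ by order of first/last occurrence, so that the two endpoints of every genuine edge meet the junction in the correct parity. After $A\cdot B$ the same-coordinate adjacencies are then exactly right, and the only pairs that can still alternate spuriously are the \emph{diagonal} pairs $(i_1,j_1),(i_2,j_2)$ with $i_1\neq i_2$, $j_1\neq j_2$ for which simultaneously $i_1\sim i_2$ in $G_1$ and $j_1\sim j_2$ in $G_2$; these are non-edges of $G_1\square G_2$ and must be eliminated.

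Block $C$ is a concatenation of $n$ permutations of the full vertex set $V(G_1)\times V(G_2)$, supplying the remaining $n$ occurrences. Its role is twofold: it must keep every genuine edge alternating, for which the $n$ permutations should agree on the relative order of its two endpoints in the parity dictated by how that pair leaves $A\cdot B$, while for every surviving spurious diagonal pair it must introduce a repeated letter — either at the $B$–$C$ junction or between two consecutive permutations — that destroys the alternation. I would build these permutations as linear extensions of the partial order forced by the now-fixed first- and second-coordinate orders, perturbing them block by block so that each diagonal pair is either inverted at least once or meets the $B$–$C$ junction in its own parity. Verifying that the resulting $W$ alternates a pair if and only if it is an edge of $G_1\square G_2$ then reduces to the three cases above, and the uniform count yields $R(G_1\square G_2)\le r_1+r_2+n$.

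The hard part is this last block: one must simultaneously preserve every same-coordinate edge and destroy every diagonal adjacency using only $\min\{|G_1|,|G_2|\}$ permutation blocks. The tension is structural, since the coordinate orders that keep genuine edges alternating also fix the relative order of the \emph{product-comparable} diagonal pairs (those with $i_1<i_2$ and $j_1<j_2$), so those cannot be broken by an order-inversion and must instead be broken through the junction parities; reconciling these competing demands is precisely where the normalisation of $w_1,w_2$ and the precise choice of the $n$ permutations have to be dovetailed. Finally, the degenerate cases in which a factor is complete (so $r=1$ and the substitution blocks do not by themselves separate repeated occurrences) or edgeless should be handled directly, and by Theorem~\ref{conn} one may additionally assume both factors connected.
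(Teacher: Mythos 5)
First, note that the paper does not prove this statement at all: Theorem~\ref{rcarts} is imported from \cite{srinivasan2024minimum} and used as a black box, so there is no in-paper argument to compare yours against. Judged on its own terms, your proposal has a genuine gap, and the gap is not merely an omitted verification: the architecture $W=A\cdot B\cdot C$ with $C$ a concatenation of $n$ full permutations cannot work as described. The problematic pairs are exactly the ones you flag, namely the ``product-comparable'' diagonal pairs $\{(i_1,j_1),(i_2,j_2)\}$ with $i_1\sim i_2$ in $G_1$, $j_1\sim j_2$ in $G_2$, and (after your normalisation by first occurrence) $i_1<i_2$, $j_1<j_2$. Such a pair alternates throughout $A$ (its two letters sit one per substituted block of the alternating letters $i_1,i_2$ of $w_1$) and throughout $B$ (one per substituted block of the alternating letters $j_1,j_2$ of $w_2$); with your normalisation $A$ ends this pair with $(i_2,j_2)$ and $B$ begins it with $(i_1,j_1)$, so the $A$--$B$ junction continues the alternation rather than breaking it. It therefore must be broken in $C$ or at the $B$--$C$ junction. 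But preserving the genuine edges forces every permutation in $C$ to place $(i_1,j_1)$ before $(i_2,j_1)$ and $(i_2,j_1)$ before $(i_2,j_2)$ (any inversion between consecutive permutations, or the wrong order in the first permutation, creates a repeated letter on an edge), and transitivity inside each total order then forces $(i_1,j_1)$ before $(i_2,j_2)$ in every permutation. Since $B$ ends the pair with $(i_2,j_2)$ and $C$ is forced to begin it with $(i_1,j_1)$, the $B$--$C$ junction also continues the alternation, and the pair alternates in all of $W$ despite being a non-edge of $G_1\square G_2$.

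So the ``junction parity'' escape route you propose for comparable diagonal pairs is closed off by the same edge constraints that close off the order-inversion route; only the anti-comparable diagonal pairs (those pairing the later-ending $i$ with the earlier-starting $j$) can be killed at the $A$--$B$ junction, and their comparable twins survive everywhere. Fixing this requires changing something structural --- e.g.\ not taking the blocks of $C$ to be permutations of the whole vertex set, interleaving the $A$- and $B$-type material per vertex of the smaller factor rather than concatenating two monolithic blocks, or distributing the $\min\{|G_1|,|G_2|\}$ extra occurrences non-uniformly across the construction --- none of which is supplied or sketched in a checkable way. Since this is precisely the ``hard part'' you yourself identify and defer, the proposal does not constitute a proof of the bound $R(G_1\square G_2)\le r_1+r_2+\min\{|G_1|,|G_2|\}$.
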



 
\subsection{Circulant Graph}\label{2.2}
A circulant graph \(C_n(R)\) for a set \(R = \{r_{1}, r_{2}, \ldots, r_{k}\}\) is defined as the graph with vertex set 
\(V(G)=\{0,1,\ldots,n-1\}\) and edge set 
\(E(G)=
\{\, ij \mid |i-j| \pmod{n} \in \{r_{1}, r_{2}, \ldots, r_{k}\}\,\},
\)
where \(0 < r_{1} < r_{2} < \cdots < r_{k} < \tfrac{n+1}{2}\). $R$ is known as the jump set and $r_{1}, r_{2}, \cdots r_{k}$ are known as jump elements of the circulant graph.  

The graph \(C_n( r_{1}, r_{2}, \ldots, r_{k})\) is regular of degree
\(
d = 
\begin{cases}
2k, & \text{if } r_{k} \neq \tfrac{n}{2}, \\[6pt]
2k-1, & \text{if } r_{k} = \tfrac{n}{2}.
\end{cases}
\)


In the following, we will discuss some fundamental properties of circulant graphs.
\begin{theorem}[\cite{vilfred2013theory}]\label{thm:cycle-period}
Let \(C_{n}(R)\) be a circulant graph and let \(r \in R\). 
Then a cycle of period \(r\) in \(C_{n}(R)\) has length \(\tfrac{n}{\gcd(n,r)}\), 
and there are exactly \(\gcd(n,r)\) vertex-disjoint periodic cycles of period \(r\).
\end{theorem}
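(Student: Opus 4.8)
The plan is to translate this combinatorial statement into the language of the additive cyclic group $(\mathbb{Z}_n,+)$, which is exactly the setting for which the earlier propositions on element orders and cyclic generators were prepared. By definition, a cycle of period $r$ is the closed walk obtained by fixing a starting vertex $i$ and repeatedly adding the jump $r$ modulo $n$, namely $i \to i+r \to i+2r \to \cdots$; consecutive vertices are adjacent because their difference modulo $n$ equals $r \in R$. Such a walk first returns to $i$ after exactly $m$ steps, where $m$ is the least positive integer satisfying $mr \equiv 0 \pmod{n}$. This $m$ is precisely the order of the element $r$ in $(\mathbb{Z}_n,+)$, and it is also the number of distinct vertices visited, hence the length of the cycle.

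For the length, I would apply the proposition on the order of a power (written additively, since $(\mathbb{Z}_n,+)$ is additive): the generator $1$ of $(\mathbb{Z}_n,+)$ has order $n$, and $r = r\cdot 1$, so
\[
o(r) = \frac{o(1)}{\gcd(o(1),\,r)} = \frac{n}{\gcd(n,r)},
\]
which gives the claimed cycle length $\frac{n}{\gcd(n,r)}$ directly.

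For the number of vertex-disjoint cycles, I would observe that each period-$r$ cycle is exactly a coset of the cyclic subgroup $H = \langle r\rangle \leq \mathbb{Z}_n$: the cycle through $i$ is the set $\{\,i + tr \bmod n : 0 \le t < m\,\} = i + H$. Since $|H| = o(r) = \frac{n}{\gcd(n,r)}$, the number of cosets equals the index
\[
\frac{|\mathbb{Z}_n|}{|H|} = \frac{n}{\,n/\gcd(n,r)\,} = \gcd(n,r).
\]
Distinct cosets are disjoint and together partition $\{0,1,\dots,n-1\}$, so there are exactly $\gcd(n,r)$ period-$r$ cycles, they are vertex-disjoint, and they cover every vertex, as claimed.

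The main point to handle carefully is verifying that the closed walk visits $m$ distinct vertices before repeating, that is, that $i + t_1 r \equiv i + t_2 r \pmod{n}$ cannot occur for $0 \le t_1 < t_2 < m$; this is immediate from the minimality in the definition of $o(r)$, but it must be stated explicitly to justify calling these orbits genuine cycles rather than walks that retrace themselves early. I would also note that the constraint $r < \tfrac{n}{2}$ forces $\gcd(n,r) \le r < \tfrac{n}{2}$, hence $\frac{n}{\gcd(n,r)} > 2$, so each such orbit has length at least $3$ and is a bona fide cycle in the simple graph $C_n(R)$ (with $r = \tfrac{n}{2}$ being the only degenerate case, where the "period-$r$ structure" reduces to a matching edge rather than a cycle).
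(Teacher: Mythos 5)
Your proof is correct. Note, however, that the paper does not prove this statement at all: it is imported verbatim from the cited reference on circulant graph theory and used as a black box, so there is no in-paper argument to compare against. Your argument --- identifying a period-$r$ cycle with a coset of the subgroup $\langle r\rangle \leq (\mathbb{Z}_n,+)$, computing its size via $o(r)=n/\gcd(n,r)$, and counting cosets by the index --- is the standard proof, and it correctly uses exactly the order and generator propositions that the paper sets up in its preliminaries for other purposes. Your explicit treatment of the two points most often glossed over (that minimality of $o(r)$ guarantees the $m$ visited vertices are distinct, and that $r=\tfrac{n}{2}$ degenerates to a single edge rather than a cycle) is a genuine improvement over simply citing the result.
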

\begin{theorem}[\cite{vilfred1994sigma}, \cite{boesch1984circulants}]\label{thm:connected-circulant}
A circulant graph \(C_n( r_{1}, r_{2}, \ldots, r_{k})\) is connected 
if and only if 
\(
\gcd(n, r_{1}, r_{2}, \ldots, r_{k}) = 1.
\) 
Moreover, if \(d = \gcd(n, r_{1}, r_{2}, \ldots, r_{k})\), then
\(
C_n( r_{1}, r_{2}, \ldots, r_{k}) \cong 
d \cdot C_{\tfrac{n}{d}}\!\left(\tfrac{r_{1}}{d}, \tfrac{r_{2}}{d}, \ldots, \tfrac{r_{k}}{d}\right),
\)
i.e, \(C(n; r_{1}, \ldots, r_{k})\) is isomorphic to the disjoint union of \(d\) copies of 
\(C\!\left(\tfrac{n}{d}; \tfrac{r_{1}}{d}, \ldots, \tfrac{r_{k}}{d}\right)\).
\end{theorem}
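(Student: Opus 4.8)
The plan is to work inside the additive cyclic group $\mathbb{Z}_n$, identifying the vertex set $\{0,1,\dots,n-1\}$ with $\mathbb{Z}_n$ so that $i \sim j$ exactly when $i - j \equiv \pm r_s \pmod n$ for some $s$. The crucial observation is that a walk in $C_n(R)$ starting at a vertex $v$ reaches precisely those vertices obtained from $v$ by repeatedly adding or subtracting jump elements; hence the set of vertices reachable from $0$ is exactly the subgroup $H = \langle r_1, r_2, \dots, r_k \rangle$ of $\mathbb{Z}_n$ generated by the jumps. By the generalized B\'ezout identity, this subgroup coincides with $\langle d \rangle$, where $d = \gcd(n, r_1, \dots, r_k)$, and $\langle d\rangle = \{0, d, 2d, \dots, (n/d - 1)d\}$ has exactly $n/d$ elements.

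For the connectivity equivalence, I would argue that the connected component containing $0$ is precisely $H = \langle d \rangle$ (and, by vertex-transitivity, every component has the same size). Thus $C_n(R)$ is connected if and only if $\langle d \rangle = \mathbb{Z}_n$, which happens exactly when $|\langle d\rangle| = n/d = n$, i.e. when $d = 1$. This establishes the first claim.

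For the \emph{moreover} part, I would partition $\mathbb{Z}_n$ into the $d$ cosets of $\langle d\rangle$, namely $c + \langle d\rangle$ for $c = 0, 1, \dots, d-1$. Because every jump element is a multiple of $d$, each edge joins two vertices lying in the same coset; consequently these $d$ cosets are exactly the connected components, each of size $n/d$. To finish, I would exhibit, for each coset, the explicit bijection $\phi: c + \langle d\rangle \to \mathbb{Z}_{n/d}$ sending $c + md \mapsto m$, and verify that it is a graph isomorphism onto $C_{n/d}(r_1/d, \dots, r_k/d)$.

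The main obstacle, and the step deserving the most care, is verifying that $\phi$ both preserves and reflects adjacency, which reduces to a cancellation in modular arithmetic. Two vertices $c + m_1 d$ and $c + m_2 d$ are adjacent in $C_n(R)$ iff $(m_1 - m_2)d \equiv \pm r_s \pmod n$ for some $s$. Writing $r_s = (r_s/d)\,d$ and $n = (n/d)\,d$, and using the fact that for $d \mid n$ one has $xd \equiv yd \pmod n$ if and only if $x \equiv y \pmod{n/d}$, this congruence is equivalent to $m_1 - m_2 \equiv \pm (r_s/d) \pmod{n/d}$, which is exactly the adjacency condition for $\phi(c+m_1 d) = m_1$ and $\phi(c + m_2 d) = m_2$ in $C_{n/d}(r_1/d, \dots, r_k/d)$. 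Since all $d$ components are isomorphic to this single graph, we conclude $C_n(R) \cong d \cdot C_{n/d}(r_1/d, \dots, r_k/d)$, completing the proof.
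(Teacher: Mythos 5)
The paper does not prove this statement at all: it is imported as a known result with citations to Vilfred and to Boesch--Tindell, so there is no in-paper argument to compare yours against. Judged on its own, your proof is correct and is the standard one. The key facts you use --- that the set of vertices reachable from $0$ is the subgroup $\langle r_1,\dots,r_k\rangle=\langle d\rangle$ of $\mathbb{Z}_n$, that every edge stays within a coset of $\langle d\rangle$ because each jump is a multiple of $d$, and the cancellation law $xd\equiv yd \pmod{n} \iff x\equiv y \pmod{n/d}$ for $d\mid n$ --- are all accurate, and the explicit map $c+md\mapsto m$ is a well-defined bijection that preserves and reflects adjacency exactly as you compute. The only step I would spell out more fully is the passage from ``no edge leaves a coset'' to ``each coset is a single component'': you need that each coset is itself connected, which follows because the translation $v\mapsto v+c$ is a graph automorphism carrying the component of $0$ (namely all of $\langle d\rangle$) onto the component of $c$; you gesture at this via vertex-transitivity, so it is a presentational point rather than a gap.
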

\begin{theorem}[\cite{heuberger2003planarity}]\label{thm:bipartite-circulant}
Let \(G := C_{n}(r_{1}, r_{2}, \ldots, r_{k})\) be a connected circulant graph.  
Then \(G\) is bipartite if and only if \(n\) is even and each \(r_{i}\) is odd for \(1 \leq i \leq k\).
\end{theorem}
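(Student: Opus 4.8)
The statement is an ``if and only if,'' so the plan is to treat the two directions separately; the sufficiency direction is a short explicit construction, while the necessity direction carries the real content.

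For sufficiency, I would assume $n$ is even and every $r_i$ is odd and exhibit a proper $2$-colouring directly: colour each vertex $v$ by its parity $c(v) = v \bmod 2$. Because $n$ is even, this descends to a well-defined map $\mathbb{Z}_n \to \mathbb{Z}_2$ (the only point to check is that $n\mathbb{Z} \subseteq 2\mathbb{Z}$). Any edge joins $i$ to $j$ with $j \equiv i \pm r_t \pmod{n}$ for some $r_t \in R$, so $c(j) - c(i) \equiv r_t \equiv 1 \pmod{2}$ since $r_t$ is odd; thus the endpoints of every edge receive different colours and $G$ is bipartite. This part requires no deep input.

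For necessity, I would assume $G$ is connected and bipartite and extract both conclusions from a single parity invariant. Fixing the vertex $0$, define $p(v) \in \mathbb{Z}_2$ to be the length modulo $2$ of an arbitrary walk from $0$ to $v$; bipartiteness guarantees all such walks have the same parity, so $p$ is well defined, with $p(0)=0$ and $p(u)\neq p(v)$ for adjacent $u,v$. The key step is to upgrade $p$ to a group homomorphism $\mathbb{Z}_n \to \mathbb{Z}_2$: given walks realizing $p(a)$ and $p(b)$, translating the second by $a$ (legitimate since the edge relation depends only on differences modulo $n$) and concatenating yields a walk from $0$ to $a+b$ of length $p(a)+p(b)$, whence $p(a+b)=p(a)+p(b)$. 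Since $G$ is connected, $R$ generates $\mathbb{Z}_n$, and $p(r_1)=1\neq 0$, so $p$ is a surjective homomorphism. A nontrivial homomorphism $\mathbb{Z}_n \to \mathbb{Z}_2$ exists only when $n$ is even and must then be reduction modulo $2$; consequently $n$ is even and $p(r_i)=r_i \bmod 2$. As each edge forces $p(r_i)=1$, every $r_i$ is odd, completing this direction.

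The main obstacle is the necessity direction, and specifically ruling out even jumps. One can already force $n$ to be even more cheaply using Theorem~\ref{thm:cycle-period}: the period-$r_1$ cycle has even length $n/\gcd(n,r_1)$ in a bipartite graph, and an even number dividing $n$ forces $2 \mid n$. However, the analogous single-jump argument does not show that each $r_i$ is odd, because a cycle built from one even jump $r_i$ can still have even length whenever $r_i$ contains fewer factors of $2$ than $n$ does; the obstruction to bipartiteness only surfaces when two different jumps interact (for instance, $C_8(1,2)$ contains the triangle on $0,1,2$). The homomorphism viewpoint is attractive precisely because it handles all jumps simultaneously and converts the combinatorial alternation condition into the purely algebraic fact that the cyclic group $\mathbb{Z}_n$ admits a nontrivial quotient of order two exactly when $n$ is even.
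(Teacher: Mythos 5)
Your argument is correct, but there is nothing in the paper to compare it against: Theorem~\ref{thm:bipartite-circulant} is imported from Heuberger's work on planarity and colorability of circulant graphs and is stated with a citation only, with no proof supplied in this paper. On its own terms your proof is sound and self-contained. The sufficiency direction via the parity colouring $v \mapsto v \bmod 2$ is the standard one, and the only point needing care (that the colouring descends to $\mathbb{Z}_n$, which requires $n$ even) is addressed. For necessity, the walk-parity function $p$ is well defined by bipartiteness and connectivity, the translation-and-concatenation step correctly uses the fact that $v \mapsto v+a$ is an automorphism of any circulant graph, and the conclusion that a surjective homomorphism $\mathbb{Z}_n \to \mathbb{Z}_2$ forces $2 \mid n$ and must be reduction modulo $2$ is exactly right; the edges $0 \sim r_i$ then force each $r_i$ odd. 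Your closing observation is also accurate: the cycle-length count of Theorem~\ref{thm:cycle-period} applied to a single jump yields only the evenness of $n$, and the homomorphism viewpoint is what handles all jumps simultaneously (as your example $C_8(1,2)$, which is non-bipartite despite $n$ even, illustrates).
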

The following two theorems establish that circulant graphs can be factorized uniquely into a Cartesian product of prime graphs. 
\begin{theorem}[\cite{vilfred2013theory}]\label{th7} 
For $n \in \mathbb{N}$ and a set $R = \{r_{1}, r_{2}, \ldots, r_{k}\}$, the Cartesian product 
\(
P_{2} \,\square\, \\
C(2n+1; R) \;\cong\; C\bigl(2(2n+1);\, 2R \cup \{2n+1\}\bigr) 
\;\cong\; C\bigl(2(2n+1);\, 2dR \cup \{2n+1\}\bigr),
\)
where $\gcd\!\bigl(2(2n+1), d\bigr) = 1$.
\end{theorem}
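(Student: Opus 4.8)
The plan is to prove both isomorphisms by exhibiting explicit vertex bijections and verifying that each preserves and reflects adjacency. Write $N = 2n+1$, so the product $P_2 \,\square\, C(N;R)$ has vertex set $\{0,1\}\times\mathbb{Z}_N$ and the target $C(2N;\,2R\cup\{N\})$ has vertex set $\mathbb{Z}_{2N}$. For the first isomorphism I would define $\phi\colon \{0,1\}\times\mathbb{Z}_N \to \mathbb{Z}_{2N}$ by $\phi(a,i) = (2i + aN)\bmod 2N$. The initial step is to confirm $\phi$ is a bijection: because $N$ is odd, $\phi(0,i)=2i$ is even while $\phi(1,i)=2i+N$ is odd, and as $i$ runs over $\mathbb{Z}_N$ each sublist hits every even, respectively every odd, residue modulo $2N$ exactly once.

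The core of the argument is matching the two edge types of the Cartesian product to the two jump classes of the circulant graph. The $P_2$-fibres, namely the edges $(0,i)\sim(1,i)$, satisfy $\phi(1,i)-\phi(0,i)=N$, so they map onto the jump-$N$ edges, which form a perfect matching on $\mathbb{Z}_{2N}$ since $\gcd(N,2N)=N$. The circulant fibres, namely edges $(a,i)\sim(a,j)$ with circular distance $|i-j|\bmod N = r\in R$, satisfy $\phi(a,i)-\phi(a,j)=2(i-j)$, whose circular distance modulo $2N$ is $2r$ precisely because $r<N/2$; hence these map onto the jump-$2r$ edges. For the reflecting direction I would check that non-edges map to non-edges: two vertices in the same fibre with $|i-j|\bmod N\notin R$ yield an even circular distance not in $2R$, while two vertices in different fibres with $i\neq j$ yield $\phi(0,i)-\phi(1,j)=2(i-j)-N$, which is odd and hence cannot lie in the even set $2R$, and reduces to $\pm N$ modulo $2N$ only when $i=j$. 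Since $\phi$ is a vertex bijection that both preserves and reflects adjacency, it is a graph isomorphism.

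For the second isomorphism $C(2N;\,2R\cup\{N\})\cong C(2N;\,2dR\cup\{N\})$ with $\gcd(2N,d)=1$, I would invoke the multiplier map $\psi\colon x\mapsto dx \bmod 2N$ on $\mathbb{Z}_{2N}$. By Theorem~\ref{num}, multiplication by $d$ is a bijection of $\mathbb{Z}_{2N}$ exactly because $\gcd(d,2N)=1$, and it sends an edge of circular distance $s$ to one of circular distance $ds \bmod 2N$. Thus $\psi$ carries the jump set $2R\cup\{N\}$ to $2dR\cup\{\,dN \bmod 2N\,\}$. The final observation is that $\gcd(d,2N)=1$ forces $d$ to be odd, so $dN\equiv N\pmod{2N}$; the jump $N$ is fixed and the image jump set is exactly $2dR\cup\{N\}$, giving the desired isomorphism.

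The step I expect to be the main obstacle is the reflecting direction of the first isomorphism, that is, ruling out spurious adjacencies between the two fibres. This is precisely where the oddness of $N$ is indispensable: it produces the clean even/odd split of the two fibres under $\phi$, guaranteeing that the only odd-distance edges of the target (the jump-$N$ matching) are used up exactly by the $P_2$-fibres, with every remaining cross-fibre pair landing at an odd distance different from $N$ and hence non-adjacent.
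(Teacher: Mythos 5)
The paper does not prove this statement: it is imported verbatim from the cited reference (Vilfred's factorization results for circulant graphs) and used as a black box, so there is no in-paper proof to compare against. Judged on its own, your argument is correct and is the standard one. The interleaving map $\phi(a,i)=2i+aN \bmod 2N$ is a genuine bijection because $N=2n+1$ is odd (even residues come from the fibre $a=0$, odd ones from $a=1$), the $P_2$-edges land exactly on the jump-$N$ matching, and the copy-of-$C(N;R)$ edges land on jumps $2r$ since $r<N/2$ guarantees $2r$ is the circular distance of $2(i-j)$ modulo $2N$. Your handling of the reflecting direction is the right place to be careful, and the parity argument there is sound: cross-fibre differences $2(i-j)-N$ are odd, so they can only match the jump $N$, which forces $i=j$. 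The second isomorphism via the multiplier $x\mapsto dx$ is the usual \'Ad\'am-type isomorphism, and your observation that $\gcd(d,2N)=1$ forces $d$ odd, hence $dN\equiv N\pmod{2N}$, correctly explains why the jump $2n+1$ is preserved while $2R$ becomes $2dR$ (understood up to sign, i.e., as circular distances). The only cosmetic point is that $2dr\bmod 2N$ should be read as a circular distance, i.e., replaced by $\min(2dr\bmod 2N,\,2N-(2dr\bmod 2N))$, which is implicit in your phrasing but worth making explicit.
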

\begin{theorem}[\cite{vilfred2013theory} Factorization Theorem of Circulant Graphs]\label{fac}
Let $p$ and $q$ be relatively prime integers. If $R \subseteq [1, p/2]$,  
$S \subseteq [1, q/2]$, and $T \subseteq [1, pq/2]$ with  
\(
T = d q R \,\cup\, d p S
\)
for some $d$ such that $\gcd(pq, d) = 1$, then
\(
C_{pq}(T) \;\cong\; C_{p}(R) \,\square\, C_{q}(S)
\).
\end{theorem}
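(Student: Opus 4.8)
The plan is to exhibit an explicit graph isomorphism between $C_{p}(R)\,\square\,C_{q}(S)$ and $C_{pq}(T)$ built from the Chinese Remainder Theorem, which applies since $\gcd(p,q)=1$. The vertex set of the Cartesian product is $\mathbb{Z}_{p}\times\mathbb{Z}_{q}$, while that of $C_{pq}(T)$ is $\mathbb{Z}_{pq}$, and these two abelian groups are isomorphic. The guiding idea is that the two directions of the product, namely moving within a $C_{p}(R)$-fibre and moving within a $C_{q}(S)$-fibre, should correspond to the two families of jumps $dqR$ and $dpS$ that make up $T$. Accordingly, I would define $\phi\colon\mathbb{Z}_{p}\times\mathbb{Z}_{q}\to\mathbb{Z}_{pq}$ by $\phi(a,b)=(dq\,a+dp\,b)\bmod pq$, the map being designed so that a step of size $r$ in the first coordinate shifts the image by $dq\,r$ and a step of size $s$ in the second shifts it by $dp\,s$.

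First I would check that $\phi$ is well defined and bijective. Well-definedness follows because increasing $a$ by $p$ or $b$ by $q$ changes $dq\,a$ or $dp\,b$ by a multiple of $pq$. For injectivity, hence bijectivity since both sets have $pq$ elements, suppose $dq(a_{1}-a_{2})+dp(b_{1}-b_{2})\equiv 0 \pmod{pq}$; reducing modulo $p$ and using $\gcd(d,p)=\gcd(q,p)=1$ forces $a_{1}\equiv a_{2}\pmod p$, and reducing modulo $q$ forces $b_{1}\equiv b_{2}\pmod q$. Here the cancellations are precisely instances of the unique-solvability of linear congruences guaranteed by Theorem~\ref{num}.

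Next I would verify that $\phi$ preserves and reflects adjacency. In the forward direction, an edge of the first type joins $(a_{1},b)$ and $(a_{2},b)$ with $a_{1}-a_{2}\equiv\pm r\pmod p$ for some $r\in R$; then $\phi(a_{1},b)-\phi(a_{2},b)\equiv\pm dq\,r\pmod{pq}$, a jump associated with the element $dq\,r\in dqR\subseteq T$, so the images are adjacent in $C_{pq}(T)$, and symmetrically edges of the second type produce differences in $dpS$. For the reverse direction I would take $u-v\equiv\pm t\pmod{pq}$ with $t\in T$ and split on whether $t\in dqR$ or $t\in dpS$: if $t=dq\,r$, then reducing the relation $dq(a_{1}-a_{2})+dp(b_{1}-b_{2})\equiv\pm dq\,r$ modulo $q$ forces $b_{1}=b_{2}$, after which cancelling $dq$ modulo $p$ yields $a_{1}-a_{2}\equiv\pm r$, recovering a first-type edge; the case $t\in dpS$ is symmetric. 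A short computation also shows $dqR$ and $dpS$ are disjoint as residues modulo $pq$ (an equality $dq\,r\equiv dp\,s$ would force $r\equiv 0\pmod p$, impossible for $r\in[1,p/2]$), so the two cases never conflict.

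The main obstacle I anticipate is the careful bookkeeping in the reverse direction: one must rule out mixed preimages in which both coordinates differ yet the total displacement still happens to land in $T$, and one must respect the folding convention that identifies a jump $t$ with $pq-t$ so that each $dq\,r$ and $dp\,s$ genuinely represents an element of $T\subseteq[1,pq/2]$, together with the attendant $\pm$ signs. The simultaneous reductions modulo $p$ and modulo $q$ are exactly what make this bookkeeping go through cleanly, and once adjacency and non-adjacency are matched in both directions, the bijection $\phi$ is the desired isomorphism.
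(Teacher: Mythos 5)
This statement is quoted in the paper as a known result from \cite{vilfred2013theory} and is not proved there, so there is no in-paper argument to compare against. Your proposal is correct on its own terms: the map $\phi(a,b)=(dq\,a+dp\,b)\bmod pq$ is the standard Chinese-Remainder-Theorem isomorphism, your reductions modulo $p$ and modulo $q$ correctly establish bijectivity and both directions of adjacency preservation, and you rightly flag the folding of $dq\,r$ and $dp\,s$ into $[1,pq/2]$, which is absorbed by the $\pm$ sign in the circulant adjacency condition. No gaps.
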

We now present the known results regarding the word-representability and non-word-representability of circulant graphs.

\begin{theorem}[\cite{srinivasan2025semi}]\label{3reg}
Let \(G \cong C_{2n}( a, n)\) be a \(3\)-regular circulant graph. Then it is word-representable and \(R(G) \leq 3\).
\end{theorem}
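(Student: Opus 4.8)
The plan is to normalise the jump $a$ and then reduce the statement to two concrete graph families whose representation numbers can be controlled by hand. Multiplication by a unit $u\in\mathbb{Z}_{2n}^{*}$ is an automorphism of $\mathbb{Z}_{2n}$, so it induces an isomorphism $C_{2n}(a,n)\cong C_{2n}(ua\bmod 2n,\ un\bmod 2n)$. Every unit of $\mathbb{Z}_{2n}$ is odd, whence $un\equiv n\pmod{2n}$ and the jump $n$ is fixed, while $a$ may be sent to any element sharing its gcd with $2n$. Choosing $u$ appropriately, I would assume without loss of generality that $a\mid 2n$ and write $L=2n/a$; by Theorem~\ref{thm:cycle-period} the jump-$a$ edges then form exactly $a$ vertex-disjoint cycles, each of length $L$.

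I would next describe $G$ as these $a$ cycles of length $L$ together with the perfect matching $i\sim i+n$ coming from the jump $n$, and split on the parity of $L$ (equivalently on whether $a\mid n$). If $L$ is even, then $i$ and $i+n$ lie in the same jump-$a$ cycle at antipodal positions, so every component is the circulant $C_{L}(1,L/2)$, i.e.\ a M\"obius ladder. If $L$ is odd (which forces $a$ even), the matching sends the cycle of residue $c\bmod a$ to that of residue $c+a/2$, pairing the cycles two at a time, and each resulting component is a prism $C_{L}\,\square\,K_{2}$; this last identification is also visible through the factorization Theorems~\ref{th7} and~\ref{fac}. By Theorem~\ref{conn} it then suffices to bound $R$ by $3$ for a single M\"obius ladder and a single prism, the smallest instances $C_{4}(1,2)\cong K_{4}$ and $C_{6}(1,3)\cong K_{3,3}$ being handled directly as complete and complete bipartite graphs.

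For each family the goal is an explicit $3$-uniform word. The natural starting point is the $2$-uniform word for the underlying cycle(s) provided by Theorem~\ref{cycle}; the real content is to encode the matching edges in a third occurrence of each letter without breaking the existing cycle alternations or creating spurious ones. One cannot simply invoke Theorem~\ref{rcarts}: for the prism $P_{2}\,\square\,C_{L}$ it only gives $R\le 1+2+2=5$, and a M\"obius ladder is not a Cartesian product at all. I would instead organise the word into three blocks aligned with a proper $3$-colouring of the ladder (whose existence also yields word-representability via Theorems~\ref{3-col} and~\ref{semi}), ordering the colour classes within each block so that cycle-neighbours and matched vertices alternate across precisely the right pair of blocks while same-colour and non-adjacent pairs do not.

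The routine part is the verification: checking that the constructed words are $3$-uniform and that two letters alternate exactly when the corresponding vertices are adjacent; if only word-representability were wanted, this could be replaced by exhibiting a shortcut-free acyclic orientation and quoting Theorem~\ref{semi}. The main obstacle is the sharp bound $R(G)\le 3$, not representability as such: the generic product estimate is too lossy and the M\"obius ladder escapes it entirely, so the argument rests on the hand-built $3$-uniform words for the ladder and the prism, together with the bookkeeping that the parity case-split and the matching geometry are exactly as described.
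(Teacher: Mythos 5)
This theorem is imported from \cite{srinivasan2025semi} and the paper contains no proof of it, so there is nothing in-house to compare against; the closest in-paper analogue of how such a bound is actually obtained is the morphism technique of Section~\ref{mor} (e.g.\ Theorem~\ref{12n}, where $f(i)=(i)_{2n}(i-2)_{2n}(i+n)_{2n}$ applied to $01\cdots(2n-1)$ yields a $3$-uniform representant of $C_{2n}(2,1,n)$ in one stroke). Your reduction is correct and rather elegant: multiplication by a unit of $\mathbb{Z}_{2n}$ does fix the jump $n$ and lets you replace $a$ by $\gcd(a,2n)$, the parity split on $L=2n/a$ is right, and the components are indeed M\"obius ladders $C_L(1,L/2)$ when $a\mid n$ and prisms $C_L\,\square\,K_2$ when $L$ is odd, so Theorem~\ref{conn} legitimately reduces everything to those two one-parameter families.

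The gap is that the theorem's actual content --- the $3$-uniform words --- is never produced, and the construction you sketch cannot produce them. A $3$-uniform word organised into ``three blocks'' each containing every letter once is a concatenation of three permutations $\pi_1\pi_2\pi_3$; two letters alternate in such a word if and only if they occur in the same relative order in all three blocks, so the represented graph is the comparability graph of the poset $\bigcap_i<_{\pi_i}$. But the very graphs you must handle are not comparability graphs: $C_8(1,4)$ (the Wagner graph, i.e.\ the component type for $a\mid n$ with $L=8$) contains the induced $5$-cycle $0\,1\,2\,3\,7$, and $C_5\,\square\,K_2$ contains an induced $C_5$ as well, so no arrangement of colour classes inside three permutation blocks can succeed for either family. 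Thus the step you call ``routine verification'' is exactly where the proof lives, and it requires a word whose three occurrences of each letter are interleaved non-block-wise --- for instance the morphism $f(i)=(i)_{2n}(i-a)_{2n}(i+n)_{2n}$ applied to $w=01\cdots(2n-1)$, verified by the $\prec_{w_i}$ bookkeeping of Theorems~\ref{20}--\ref{12n}, which also avoids your case split entirely. As minor points: $C_6(1,3)\cong K_{3,3}$ is the smallest M\"obius ladder, not a prism (the smallest prism is $C_6(2,3)\cong K_3\,\square\,K_2$), and $3$-colourability of the ladders gives word-representability via Theorem~\ref{3-col} but carries no information about the bound $R(G)\le 3$.
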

\begin{theorem}[\cite{kitaev2020semi}]
Let \(G \cong C_n( a, b)\) be a \(4\)-regular circulant graph. Then \(G\) is word-representable.
\end{theorem}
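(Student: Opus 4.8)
The plan is to invoke Theorem~\ref{semi} and exhibit a semi-transitive orientation of $G \cong C_n(a,b)$. First I would reduce to the connected case: by Theorem~\ref{conn} it suffices to word-represent a single connected component, and by Theorem~\ref{thm:connected-circulant} each component is again a $4$-regular circulant graph $C_{n'}(a',b')$ with $\gcd(n',a',b')=1$, since dividing all three parameters by $d=\gcd(n,a,b)$ preserves $0<a'<b'<n'/2$ and hence keeps the degree equal to $4$. So I assume $\gcd(n,a,b)=1$ throughout.

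Next I would normalize the jump set whenever possible. If $\gcd(a,n)=1$ (or symmetrically $\gcd(b,n)=1$), the multiplier map $i \mapsto a^{-1}i \pmod n$ is a graph isomorphism carrying $C_n(a,b)$ to $C_n(1,c)$, where $c$ is the representative of $\pm a^{-1}b$ lying in $(0,n/2)$; here Theorem~\ref{num} guarantees the needed inverse. In this normalized form the $1$-jumps trace a spanning Hamiltonian cycle (Theorem~\ref{thm:cycle-period} with $\gcd(1,n)=1$) and the $c$-jumps are chords. When instead neither jump is coprime to $n$, I would try to split $G$ as a Cartesian product: if $n=pq$ with $\gcd(p,q)=1$ and $\{a,b\}$ can be written in the form $dqR\cup dpS$ of Theorem~\ref{fac}, then $G \cong C_p(R)\,\square\,C_q(S)$, each factor being a cycle graph (Theorem~\ref{cycle}), so Theorem~\ref{carts} settles this case.

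With the structure normalized I would dispose of the easy cases by coloring. Whenever $C_n(1,c)$ happens to be $3$-colorable (which covers many residue classes of $n$, and in particular the bipartite situation of Theorem~\ref{thm:bipartite-circulant}, itself a comparability graph by Remark~\ref{bip}), Theorem~\ref{3-col} gives word-representability directly; and the exceptional complete graph $K_5 \cong C_5(1,2)$ is word-representable as a comparability graph. What remains are the genuinely hard cases: connected, non-$3$-colorable graphs $C_n(1,c)$ that do not factor, the prototype being $C_n(1,2)$ with $n \not\equiv 0 \pmod 3$.

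For these I would construct an explicit orientation and prove it is shortcut-free. The naive rule ``orient every edge from the smaller to the larger label'' is acyclic but fails, because a wraparound cycle-edge such as $0 \to n-1$, together with the long directed path $0 \to 1 \to \cdots \to n-1$, already forms a shortcut. The remedy I would pursue is to partition the vertices into consecutive blocks and orient the within-block, between-block, and wraparound edges so that no short chord leaps over a long directed path: orient the Hamiltonian $1$-jump cycle as a path, redirect the few wraparound edges, and give the $c$-jump chords a direction consistent with a carefully chosen acyclic order. \textbf{The main obstacle is precisely the verification of shortcut-freeness.} Since every edge joins vertices at circular distance $a$ or $b$, the index increments along any directed path are tightly constrained, and I would exploit this to show that whenever an edge $u \to v$ spans a directed path in the orientation, the intermediate vertices are pairwise adjacent, so the non-adjacent pair required by a shortcut cannot arise. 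I expect this final step to require a case analysis on the residue of $n$ modulo small integers and on $\gcd(a,n)$ and $\gcd(b,n)$, mirroring the number-theoretic and factorization techniques the paper develops for the $5$-regular setting.
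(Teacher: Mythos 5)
This statement is imported by the paper from \cite{kitaev2020semi} with no proof given here, so there is nothing internal to compare against; judged on its own terms, your proposal has a genuine gap. The reductions in your first three paragraphs (restricting to connected components, renormalizing to $C_n(1,c)$ when a jump is invertible modulo $n$, splitting off Cartesian-product and $3$-colorable cases) are all sound but are the easy part of the theorem. The entire difficulty lives in your last paragraph, and there you do not actually produce an orientation: you describe a ``remedy you would pursue'' and a case analysis you ``expect'' to need. A proof must exhibit the block partition, state the orientation rule for within-block, between-block, wraparound and chord edges explicitly, and then verify shortcut-freeness; none of that is done. Moreover, the verification strategy you sketch --- showing that whenever an edge $u \to v$ spans a directed path, ``the intermediate vertices are pairwise adjacent'' --- cannot work as stated: pairwise adjacency of all vertices on the path would force that path to induce a clique, and a $4$-regular graph contains no clique on more than five vertices, so this could only ever handle paths of length at most four. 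The correct obligation is to show that every semi-cycle whose vertex set contains a non-adjacent pair fails to be an induced shortcut, which is a different and weaker condition that your argument does not engage with.

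There is also a gap in your case division itself. When $\gcd(n,a,b)=1$ but neither $a$ nor $b$ is coprime to $n$ (for example $C_{12}(2,3)$), the multiplier normalization is unavailable, and Theorem~\ref{fac} only applies when the jump set happens to have the form $dqR \cup dpS$, which is not automatic; such graphs need not be $3$-colorable either. So even before reaching the unproved orientation step, some $4$-regular circulant graphs fall outside every branch you enumerate. The residual class of ``genuinely hard cases'' is therefore not pinned down, and the claim that $C_n(1,2)$ with $n \not\equiv 0 \pmod 3$ is ``the prototype'' is asserted rather than established.
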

\begin{theorem}[\cite{srinivasan2025semi}]
  Let \(G \cong C_n( 1, a)\) be a connected \(4\)-regular circulant graph with 
\(\frac{n}{3} \leq a < \frac{n}{2}\) and \(n > 6\).  
Then $G$ is word-representable and \(R(G) \leq 4\).
  
\end{theorem}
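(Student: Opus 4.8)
The plan is to exhibit a semi-transitive orientation of $G=C_n(1,a)$ and then read off a word in which every letter occurs at most four times, so that Theorem~\ref{semi} yields word-representability while the word itself certifies $R(G)\le 4$. Since $1\in\{1,a\}$ we have $\gcd(n,1,a)=1$, so $G$ is connected by Theorem~\ref{thm:connected-circulant}, and $a<\tfrac n2$ makes it genuinely $4$-regular. The hypothesis $\tfrac n3\le a<\tfrac n2$ is exactly $2a<n\le 3a$; setting $r=n-2a$ gives $0<r\le a$ and (using $n>6$) $a\ge 3$, facts that will control how the two jump-classes interact. I would also note at the outset that these graphs need not be $3$-colorable --- $C_7(1,3)$ satisfies the hypotheses yet has chromatic number $4$ --- so Theorem~\ref{3-col} is unavailable and a genuine semi-transitive orientation has to be built.

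Structurally $G$ is the edge-union of the Hamiltonian cycle $H\colon 0-1-\cdots-(n-1)-0$ given by the jump $1$ and the jump-$a$ subgraph, which by Theorem~\ref{thm:cycle-period} splits into $\gcd(n,a)$ cycles of length $n/\gcd(n,a)$. The first thing to record is that the naive orientation ``$u\to v$ whenever $u<v$'' is acyclic but not semi-transitive: a short jump-$a$ edge $s\to s+a$ together with the monotone path $s\to s+1\to\cdots\to s+a$ in $H$ is a shortcut as soon as $a\ge 3$, which always holds here. The whole difficulty is to perturb this orientation so that no jump-$a$ edge is spanned by a directed jump-$1$ path, while keeping the digraph acyclic.

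To achieve this I would use the three blocks $B_0=[0,a)$, $B_1=[a,2a)$, $B_2=[2a,n)$ cut out by $a$; because $2a<n\le 3a$ there are exactly three of them, the third of size $r$, and every jump-$a$ edge joins two distinct blocks (allowing for the wrap back into $B_0$). The idea is to orient the jump-$1$ edges forward inside each block but to reverse the few edges at the two block boundaries and at the wrap-around edge $\{n-1,0\}$, so that every maximal directed jump-$1$ path is confined to a single block and hence has length at most $a-1<a$; the jump-$a$ edges are then oriented consistently with the block order. Since no directed jump-$1$ path can reach length $a$, no jump-$a$ edge is spanned, eliminating the shortcuts found above. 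One then checks (i) acyclicity, by producing a topological order refining the block order, and (ii) shortcut-freeness, by sorting a hypothetical shortcut according to whether its shortcutting edge is a jump-$1$ or a jump-$a$ edge and ruling out each possibility, using $r=n-2a$ to bound the admissible intermediate vertices.

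The verification in (ii) is where the real effort lies, and I expect it to be the main obstacle: the awkward configurations are semi-cycles that straddle a block boundary or the wrap-around edge, where the two jump-classes and the cyclic identification overlap, and these require the precise inequalities coming from $2a<n\le 3a$. Once the orientation is certified semi-transitive, I would convert it into a representing word by the standard procedure associated with a semi-transitive orientation and argue that the three-block structure keeps each letter's number of occurrences down to four; verifying that two vertices alternate in this word exactly when they are adjacent in $G$ then yields both word-representability and $R(G)\le 4$.
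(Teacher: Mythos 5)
This statement is quoted from \cite{srinivasan2025semi}; the present paper contains no proof of it, so your attempt can only be judged against the technique the paper uses for its directly analogous results (Theorems~\ref{20}, \ref{21}, \ref{12n}), which is an explicit morphism producing a $k$-uniform word. Measured that way, your proposal has a decisive gap: even if your block orientation were certified semi-transitive, that only yields word-representability via Theorem~\ref{semi}; it does not yield $R(G)\le 4$. There is no ``standard procedure'' that reads off a $4$-uniform word from a semi-transitive orientation --- the general conversion produces a uniform word whose multiplicity is governed by the length of a longest directed path, and in your orientation the directed jump-$1$ paths inside a block have about $a\approx n/3$ vertices, so the generic bound grows with $n$. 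The claim that ``the three-block structure keeps each letter's number of occurrences down to four'' is exactly the content of the theorem and is asserted, not proved. The way to get $R(G)\le 4$ is to write the word down: define a morphism such as $f(i)=(i)_n\,(i-1)_n\,(i-a)_n\,(i+a)_n$, show $f(0\,1\cdots(n-1))$ is $4$-uniform, and verify the alternations using the inequalities $2a<n\le 3a$, precisely as the paper does in its $5$-regular analogues.

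Separately, the orientation itself is not in a provable state. Reversing the boundary edges $\{a-1,a\}$ and $\{2a-1,2a\}$ creates arcs from $B_1$ into $B_0$ and from $B_2$ into $B_1$, so the orientation does not refine the block order and the promised ``topological order refining the block order'' cannot exist as stated; acyclicity would have to be argued differently, and the shortcut-freeness check --- which you yourself identify as the main obstacle, especially at the block boundaries and the wrap-around --- is not carried out. Your preliminary observations are sound (the reduction $2a<n\le 3a$, the fact that every jump-$a$ edge crosses blocks, the shortcut in the monotone orientation, and the example $C_7(1,3)\cong\overline{C_7}$ showing $3$-colorability fails), but the proof of both conclusions of the theorem is missing.
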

\begin{remark}\label{2reg}
     As we know that any 2-regular connected graph is a cycle, then any 2-regular connected circulant graph is also a cycle. Hence, by Theorem \ref{cycle}, $G$ is word-representable and $R(G)=2$ where $G$ is any 2-regular circulant graph.

\end{remark}
The following theorem will provide examples of non-word-representable circulant graphs.
\begin{theorem}[\cite{srinivasan2025semi}]\label{thm:new}
Let $G=C_n(r,r+1,\ldots,2r)$. Then $G$ is not word-representable whenever $2<\tfrac{n+1}{5} < r < \tfrac{n-1}{4}$.
\end{theorem}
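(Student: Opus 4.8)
The plan is to prove non-word-representability by exhibiting a small forbidden induced subgraph, namely the wheel $W_5$ (a $5$-cycle together with one hub vertex joined to all five rim vertices), which is known to be non-word-representable \cite{kitaev2015words}. Word-representability is hereditary under taking induced subgraphs: restricting a semi-transitive orientation of $G$ to any vertex subset keeps it acyclic and shortcut-free, so by Theorem~\ref{semi} every induced subgraph of a word-representable graph is word-representable. Consequently, if $C_n(r,r+1,\dots,2r)$ contains an induced $W_5$, it cannot be word-representable, and the theorem follows. So the whole task reduces to locating one induced $W_5$ uniformly over the window $4r+1<n<5r-1$ (which is exactly the hypothesis, since $\tfrac{n+1}{5}<r$ means $n<5r-1$ and $r<\tfrac{n-1}{4}$ means $n>4r+1$; note also $r\ge 3$ because $\tfrac{n+1}{5}>2$).

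By vertex-transitivity I would place the hub at $0$, whose neighbourhood is $\{r,r+1,\dots,2r\}\cup\{n-2r,n-2r+1,\dots,n-r\}$. Writing $\delta(x,y)=\min\{|x-y|,\,n-|x-y|\}$ for the circular distance, two vertices are adjacent exactly when $\delta\in\{r,\dots,2r\}$. I propose the five rim vertices $r,\;2r,\;3r+1,\;r+1,\;n-2r$ (all distinct and all adjacent to the hub under the hypotheses), arranged in the cyclic order $r\to 2r\to 3r+1\to r+1\to (n-2r)\to r$. It then remains only to confirm that these five vertices induce a $C_5$ in this order and that each is adjacent to $0$.

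The verification amounts to evaluating $\delta$ on the $\binom{6}{2}=15$ pairs. The five hub pairs have circular distances $r,\,r+1,\,2r,\,n-3r-1,\,2r$, each lying in $[r,2r]$ precisely because $4r+1<n<5r-1$ (for instance $n-3r-1\ge r\Leftrightarrow n\ge 4r+1$ and $n-3r-1\le 2r\Leftrightarrow n\le 5r+1$). The five consecutive rim pairs have circular distances $r,\,r+1,\,2r,\,n-3r-1,\,n-3r$, again all in $[r,2r]$ by the same inequalities, so they are edges; the five non-consecutive rim pairs have circular distances $1,\,2r+1,\,r-1,\,n-4r,\,5r+1-n$, and here the two-sided bound is what does the work: $n<5r$ forces $n-4r<r$, while $n>4r+1$ forces $0<5r+1-n<r$, so all five of these distances fall outside $[r,2r]$ and the corresponding pairs are non-edges. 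Hence $\{0,r,r+1,2r,3r+1,n-2r\}$ induces a copy of $W_5$, which completes the argument.

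The routine but delicate part, and essentially the only place where care is required, is the bookkeeping of the circular (modular) distances for the two ``wrap-around'' vertices $3r+1$ and $n-2r$, together with the check that this single explicit configuration remains a genuine induced $W_5$ throughout the entire range $4r+1<n<5r-1$. This is exactly where both inequalities are consumed: the lower bound $n>4r+1$ guarantees $5r+1-n\neq r$ (keeping $3r+1$ and $n-2r$ non-adjacent) and keeps the six vertices distinct, while the upper bound $n<5r-1$ guarantees $n-3r-1\ge r$ (making $r+1$ and $n-2r$ adjacent) and $n-4r<r$ (making $2r$ and $n-2r$ non-adjacent). I expect no conceptual obstacle beyond making these boundary inequalities airtight.
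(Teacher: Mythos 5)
The paper does not prove this statement; it is imported verbatim from \cite{srinivasan2025semi} as a known result, so there is no in-paper proof to compare against. Judged on its own, your argument is correct and self-contained: word-representability is hereditary (deleting letters from a representing word represents the induced subgraph, or equivalently restricting a semi-transitive orientation stays acyclic and shortcut-free), $W_5$ is the standard minimal non-word-representable graph, and your explicit configuration $\{0,r,r+1,2r,3r+1,n-2r\}$ does induce a $W_5$ throughout the window $4r+2\le n\le 5r-2$. I checked all fifteen circular distances: the five hub pairs and the five consecutive rim pairs land in $[r,2r]$, and the five diagonals ($1$, $2r+1$, $r-1$, $n-4r$, $5r+1-n$) all fall outside it, exactly as you claim; a spot check at $r=4$, $n=18$ (the first nonempty case under the strict inequalities) confirms the induced $W_5$ on $\{0,4,5,8,13,10\}$. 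This is almost certainly the same mechanism as the cited source (locating a forbidden induced wheel), but since that proof is not reproduced here, your write-up has the added value of being independently checkable. One cosmetic slip in your closing paragraph: the inequality $n-3r-1\ge r$ comes from the \emph{lower} bound $n>4r+1$, not the upper bound $n<5r-1$ (you state this correctly earlier); the upper bound is what gives $n-3r-1\le 2r$ and $n-4r<r$. No mathematical gap.
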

Here, we presented some results concerning the word-representability of $k$-regular circulant graphs for $k \leq 4$, and provided explicit examples of circulant graphs that are not word-representable. In the following section  we pursue the investigation of 5-regular circulant graphs.

\noindent 
\section{Word-representability of 5-regular Circulant Graph}\label{result}
In this section, we first divide the class of 5-regular circulant graphs into subclasses based on the parity of the jump set elements and analyze their word-representability (Theorem \ref{odd}, Remark \ref{even}). We then study word-representability more generally and, in certain cases, establish upper bounds on the representation number by employing techniques such as colorability (\ref{col}), morphisms (\ref{mor}) and factorization into smaller circulant graphs (\ref{factor}).  
  
\begin{theorem}\label{odd}
Let $G = C_{2n}(a,b,n)$ be a $5$-regular circulant graph, where $n$ is odd. 
If $a$ and $b$ have the same parity, i.e., both are even or both are odd, then $G$ is word-representable.

\begin{proof}
   Let $G=C_{2n}(a,b,n)$ be a 5-regular circulant graph, where $n$ is odd. We consider two cases as below.
   \begin{itemize}
       \item  Both $a$ and $b$ are even. In this case, using Theorem \ref{th7} we get $G=C_{2n}(a,b,n) \cong P_2 \square C_n(\frac{a}{2}, \frac{b}{2})$, where $P_2$ is the path of $2$ vertices. Now, combining Remark \ref{bip} and Theorem \ref{carts}, we conclude that $G$ is word-representable. 
       \item Both $a$ and $b$ are odd. In this case, using Theorem \ref{thm:bipartite-circulant}, we conclude that $G$ is bipartite. Therefore, $G$ is word-representable by Remark \ref{bip}.
   \end{itemize}
\end{proof}
\end{theorem}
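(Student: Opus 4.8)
The plan is to split the argument according to the common parity of $a$ and $b$, since the two cases exploit entirely different structural features of the graph, and in both cases reduce $G$ to a graph already known (via earlier results) to be word-representable.

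First I would handle the case where both $a$ and $b$ are even. Here the idea is to recognize $G = C_{2n}(a,b,n)$ as a Cartesian product by invoking the factorization in Theorem~\ref{th7}. Writing $n = 2m+1$ (odd), that theorem gives $P_2 \,\square\, C_{n}(R) \cong C_{2n}\!\bigl(2R \cup \{n\}\bigr)$, so with $R = \{a/2,\,b/2\}$ we obtain $2R = \{a,b\}$ and hence $G \cong P_2 \,\square\, C_{n}(a/2,\,b/2)$. Since $P_2$ is a path and therefore a (bipartite) comparability graph, it is word-representable by Remark~\ref{bip}; the factor $C_{n}(a/2,b/2)$ is a $4$-regular circulant graph and is word-representable by the Kitaev--Pyatkin theorem quoted in the excerpt. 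Finally, Theorem~\ref{carts} says the Cartesian product of two word-representable graphs is word-representable, which closes this case.

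Second I would handle the case where both $a$ and $b$ are odd. Here the key observation is that every jump element is odd: $a$ and $b$ are odd by assumption, and the third jump element $n$ is odd since $n$ is odd. Because the number of vertices $2n$ is even and all three jumps are odd, Theorem~\ref{thm:bipartite-circulant} applies and shows $G$ is bipartite. By Remark~\ref{bip}, bipartite graphs admit a transitive orientation and are therefore word-representable, finishing this case.

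The main obstacle, and the point that needs the most care, is verifying that the hypotheses of the invoked structural theorems are genuinely met in each case, since those theorems carry side conditions tied to the parity of $n$ and the divisibility of the jumps. In the even case one must check that $a/2$ and $b/2$ are legitimate jump elements of a circulant graph on $n$ vertices (i.e.\ they lie strictly below $n/2$ and are distinct), so that $C_n(a/2,b/2)$ is a well-defined $4$-regular circulant graph; this follows from $0 < a < b < n$ together with $a,b$ even and $n$ odd. In the odd case one must confirm all three jumps are odd and that $2n$ being even makes Theorem~\ref{thm:bipartite-circulant} applicable, which again hinges on the hypothesis that $n$ is odd. Once these parity bookkeeping checks are in place, both cases reduce immediately to previously established word-representability results, and there is no delicate combinatorial or orientation-theoretic construction required.
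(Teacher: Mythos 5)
Your proposal is correct and follows essentially the same two-case argument as the paper: the even case via the $P_2 \,\square\, C_n(a/2,b/2)$ factorization from Theorem~\ref{th7} combined with Theorem~\ref{carts}, and the odd case via bipartiteness from Theorem~\ref{thm:bipartite-circulant} and Remark~\ref{bip}. Your explicit appeal to the Kitaev--Pyatkin $4$-regular result for the factor $C_n(a/2,b/2)$, and your verification that $a/2 < b/2 < n/2$ are legitimate jumps, make the even case slightly more complete than the paper's version, which leaves the word-representability of that factor implicit.
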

\begin{remark}\label{even}
Let $G = C_{2n}(a,b,n)$ be a $5$-regular circulant graph where $n$, $a$, and $b$ are even. As $gcd(a,b,n)\geq2$, then by Theorem \ref{thm:connected-circulant}, $G$ is not connected. Hence, $G$ is word-representable if and only if all of its connected components are word-representable (by Theorem~\ref{conn}). From this point onward, we restrict our attention to connected $5$-regular circulant graphs.  
 
\end{remark}

\begin{theorem}\label{15}
Let $G=C_{2n}(a,b,n)$ be a circulant graph with $ gcd(b,2n)=1 $. Then $G \cong C_{2n}(x,1,n)$
, where  $bx \equiv a \pmod{2n}$. 
\begin{proof}
Since $\gcd(b,2n)=1$, the element $b$ is a generator of the cyclic group $\mathbb{Z}_{2n}$.  
Hence, the vertices of the circulant graph can be arranged on a cycle as  
\(
\{0,\, 1\cdot b,\, 2\cdot b,\, \ldots,\,(n-1)\cdot b\}.
\) 
Consider the isomorphism
\(
f : V\big(C_{2n}(a,b,n)\big) \;\longrightarrow\; V\big(C_{2n}(x,1,n)\big)\}
\)
defined by
\(
f(i\cdot b) = i,  \forall\, i \in \{0,1,2,\ldots,n-1\}.
\)

\begin{itemize}
    \item If $\lvert i-j \rvert = 1$, then $i \cdot b \sim j \cdot b \iff i \sim j$.
    
    \item If $\lvert i-j \rvert > 1$, then 
    \(
    i \cdot b \sim j \cdot b \implies  i \cdot b - j \cdot b \equiv \pm a \pmod{2n} 
    \;\;\text{or}\;\;
    i \cdot b - j \cdot b \equiv n \pmod{2n}.
    \)\\
    Now, $i \cdot b-j \cdot b \equiv \pm a \pmod{2n} \iff i-j \equiv \pm x \pmod {2n}$ by Theorem \ref{num}. Hence $i \cdot b \sim j \cdot b \iff i  \sim j.$\\
    Since $2n\equiv 0 \pmod{2n}$, $o(n)=2$. Let $n=k.b$. Then $o(k.b)=\frac{o(b)}{gcd(k,o(b))}$, which implies $\frac{2n}{gcd(k,2n)}=2$, i.e, $k=n$. Hence $n.b=n$. Then, $i \cdot b - j \cdot b \equiv n \pmod{2n} \implies (i-j) \equiv n \pmod {2n}$ by Theorem \ref{num}. Again, $i-j\equiv n\pmod{2n}\implies i.b-j.b\equiv n.b\equiv n\pmod{2n}$. Hence $i \cdot b \sim j \cdot b \iff i  \sim j$.
\end{itemize}
Hence, $C_{2n}(a,b,n) \cong C_{2n}(x,1,n)$.
\end{proof}
\end{theorem}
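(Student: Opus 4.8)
The plan is to exploit the hypothesis $\gcd(b,2n)=1$, which guarantees that $b$ is a generator of the additive cyclic group $(\mathbb{Z}_{2n},+)$ and, equivalently, that $b$ is invertible modulo $2n$. Let $b^{-1}$ denote its multiplicative inverse, and define the relabeling $f:\mathbb{Z}_{2n}\to\mathbb{Z}_{2n}$ by $f(v)=b^{-1}v \pmod{2n}$; this is precisely the map sending $i\cdot b$ to $i$. Since multiplication by the unit $b^{-1}$ is a bijection of $\mathbb{Z}_{2n}$, $f$ is automatically a bijection of vertex sets, so it only remains to check that $f$ preserves and reflects adjacency. Two vertices $u,v$ are adjacent in $C_{2n}(a,b,n)$ exactly when $u-v\equiv \pm a,\ \pm b,$ or $n \pmod{2n}$, and I would verify that each of these three jump types is sent to the corresponding jump type of $C_{2n}(x,1,n)$.

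For the jumps $\pm b$ and $\pm a$ this is immediate from invertibility. Writing $u=i\cdot b$ and $v=j\cdot b$, we have $u-v=(i-j)b$, so multiplying the congruence by $b^{-1}$ gives $u-v\equiv \pm b \iff i-j\equiv \pm 1$ and $u-v\equiv \pm a \iff i-j\equiv \pm x$, where $x\equiv b^{-1}a$, i.e.\ $bx\equiv a\pmod{2n}$. The existence and uniqueness of such an $x$ modulo $2n$ is guaranteed by Theorem~\ref{num}. Thus the $b$-jump becomes the $1$-jump and the $a$-jump becomes the $x$-jump.

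The hard part will be the jump $n$, since a priori $f$ could move it: I would need $(i-j)b\equiv n \iff i-j\equiv n \pmod{2n}$, which reduces to showing $n\cdot b\equiv n\pmod{2n}$. The key observation is that $\gcd(b,2n)=1$ forces $b$ to be odd, whence $(b-1)n$ is a multiple of $2n$ and $nb\equiv n\pmod{2n}$. Equivalently, in a group-theoretic formulation matching the framework of the excerpt, the element $n$ has order $2$ in $\mathbb{Z}_{2n}$; writing $n=k\cdot b$ and applying the order formula $o(k\cdot b)=o(b)/\gcd(k,o(b))=2n/\gcd(k,2n)$ forces $\gcd(k,2n)=n$ and hence $k=n$, so that $n\cdot b\equiv n$. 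With this fact in hand, multiplying $u-v\equiv n$ by $b^{-1}$ and using $b^{-1}n\equiv n$ shows the $n$-jump is preserved in both directions, completing the verification that $f$ is a graph isomorphism and hence that $C_{2n}(a,b,n)\cong C_{2n}(x,1,n)$.
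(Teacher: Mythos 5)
Your proposal is correct and follows essentially the same route as the paper: the same relabeling $f(i\cdot b)=i$ (multiplication by $b^{-1}$), the same use of Theorem~\ref{num} for the $\pm a$ and $\pm b$ jumps, and the same key fact $n\cdot b\equiv n\pmod{2n}$ for the $n$-jump. Your parity argument ($b$ odd, so $(b-1)n\equiv 0\pmod{2n}$) is a slightly more elementary justification of that last fact than the paper's order computation, but the overall structure is identical.
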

In the following sections, for the class of 5-regular circulant graphs with either $\gcd(a,2n)\\
=1$ or $\gcd(b,2n)=1$, we restrict our attention to the graphs of the form $C_{2n}(x,1,n)$.

\subsection{Word-Representability and Colorability of 5-Regular Circulant Graphs}\label{col}

In this section, we establish the word-representability of \(5\)-regular circulant graphs by relating it to their colorability. 
Our approach relies on Theorem~\ref{3-col} together with the embedding technique described in \cite{huang2024embedding}. Basically, for a given graph \(G\), we find a word-representable graph \(H\) with a homomorphism 
\(f: G \to H\), orient \(G\) according to a fixed acyclic orientation of \(H\), and then check \(H\) 
for shortcutting edges; if none exist, we conclude that the induced orientation of \(G\) is semi-transitive and if there exists some shortcutting edge, we check whether it is inducing a shortcut in $G$ or not. If no shortcut is induced, then we conclude that the induced orientation in $G$ is semitransitive.

\begin{theorem}\label{17}
Let $G=C_{2n}(a,b,n)$ be a 5-regular circulant graph. $G$ is word-represent-\
able if there exists a generator $g$ of the underlined cyclic group $(Z_{2n},+)$ such that $r=min\{p,q,2n-p,2n-q\}\geq \lceil\frac{2n}{3}\rceil$, where $a=p.g$ and $b=q.g$.
\begin{proof}
We claim that $r=min\{p,q,2n-p,2n-q\}<n$. To prove this we first prove that for any  $j\in(Z_{2n},+)$, $o(j)=2 \iff j=n.g$. Let $o(j)=2$ and $j=x.g$ for some $x\in\{0,1,2\cdots 2n-1\}$. Then $o(j)=o(x.g)\implies 2=\frac{o(g)}{gcd(x,o(g))}\implies gcd(x,2n)=n\implies x=n$. Again, let $j=n.g$. Then $o(j)=o(n.g)\implies o(j)=\frac{o(g)}{gcd(n,o(g))}\implies o(j)=2.$

Moreover, since \(o(n) = 2\), the element \(n = n.g\) is the unique element of order \(2\) in \((\mathbb{Z}_{2n}, +)\).  
As \(a \neq n\), it follows that \(p\) is either greater than \(n\) or less than \(n\).  
Without loss of generality, assume \(p > n\). Then \(2n - p < n\), which in turn implies \(r < n\).

Next, we define
$
X_0=\{j\in V(G)\mid j=k.g \hspace{.3em}\text{where} \hspace{.3em}, 0\leq k\leq r-1  \},
$ $
X_1=\{j\in V(G)\mid j=k.g \hspace{.3em}\text{where} \hspace{.3em}, r\leq k\leq 2r-1  \}
 $ and $X_2= V(G)\setminus(X_0 \cup X_1)$.
Clearly, $X_0,X_1,X_2$ form a partition of $V(G)$. We assign color $c_i$ to every vertex in $X_i$ for $i\in \{0,1,2\}$.  

Let $j_{1}\in X_i$ and $j_{1}=k_{1}.g$. Suppose, for contradiction, $j_{1} \sim j_{2}(=k_{2}.g)$ and $ j_{2} \in X_i$. Then $|k_{1}-k_{2}| \in \{p,q,2n-p,2n-q,n\}$, but $|k_{1}-k_{2}|\leq r-1$ where $r=min\{p,q,2n-p,2n-q\}$. Hence, we get a contradiction. Thus, $G$ is 3-colorable and word-representable by Theorem \ref{3-col}.

\end{proof}

\end{theorem}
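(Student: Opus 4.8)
The plan is to prove that $G = C_{2n}(a,b,n)$ is $3$-colorable under the stated hypothesis, and then invoke Theorem~\ref{3-col} to conclude word-representability. The key device is the generator $g$: since $\gcd$ of the relevant index with $2n$ equals $1$, multiplication by $g$ reorders all $2n$ vertices along a single cycle, so I may relabel every vertex $j$ as $j = k \cdot g$ with $k \in \{0,1,\dots,2n-1\}$. Under this relabeling, an edge of $G$ connects $k_1 \cdot g$ and $k_2 \cdot g$ precisely when $|k_1 - k_2| \pmod{2n} \in \{p,\,q,\,n\}$ (equivalently, when the index difference lies in $\{p,q,2n-p,2n-q,n\}$), where $a = p\cdot g$ and $b = q\cdot g$. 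The whole point of passing to $g$ is that it lets me treat the jump structure purely in terms of the indices $p$, $q$, and $n$.

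The coloring itself partitions the index range $\{0,1,\dots,2n-1\}$ into three consecutive blocks of length $r = \min\{p,q,2n-p,2n-q\}$ (with the leftover vertices forming the third block $X_2$), and colors each block with one color. The correctness of this coloring hinges on a single inequality: I need every jump length to strictly exceed $r-1$, so that no edge can join two vertices inside the same length-$r$ block. Concretely, for two vertices $k_1\cdot g, k_2\cdot g$ in the same block we have $|k_1 - k_2| \le r-1$, yet any adjacency forces $|k_1 - k_2|$ (taken mod $2n$) to be one of $p,q,2n-p,2n-q,n$, each of which is at least $r$. The hypothesis $r \ge \lceil 2n/3 \rceil$ guarantees the three blocks actually cover all $2n$ vertices, so that $X_0, X_1, X_2$ form a genuine partition.

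Before the coloring argument can run, I first need to establish that $r < n$, since otherwise the jump $n$ itself could fall inside a block and the third color would not suffice. The natural route, which I would follow, is to characterize the unique element of order $2$ in $(\mathbb{Z}_{2n},+)$ using the order formula $o(x\cdot g) = o(g)/\gcd(x, o(g))$: an element has order $2$ iff its index satisfies $\gcd(x,2n) = n$, forcing $x = n$. Since $a \ne n$ (the three jumps are distinct and $n$ is already a jump), the index $p$ cannot equal $n$, so either $p < n$ or $p > n$; in the former case $p = r < n$ directly, and in the latter $2n - p < n$ gives $r < n$. This is the one place where the number-theoretic setup does real work, and I expect it to be the main obstacle, not because it is deep but because it requires carefully justifying why the ``middle'' jump value $n$ cannot collide with the block structure. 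Once $r < n$ is in hand, the contradiction argument (an in-block edge would force a jump length $\le r-1$, impossible since all jump lengths are $\ge r$) completes the proof, and Theorem~\ref{3-col} finishes it.
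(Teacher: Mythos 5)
Your proposal is correct and follows essentially the same route as the paper: relabel the vertices by powers of the generator $g$, partition the index range into three consecutive blocks of length (at most) $r$, show no edge stays inside a block because every jump length $p,q,2n-p,2n-q,n$ is at least $r$, and establish $r<n$ via the unique element of order $2$ in $(\mathbb{Z}_{2n},+)$ before concluding with Theorem~\ref{3-col}. The only cosmetic difference is your remark that $r\geq\lceil 2n/3\rceil$ makes the blocks ``cover'' the vertices; its actual role (in both your argument and the paper's) is to bound the size of the leftover block $X_2$ by $r$ so that in-block index differences stay below $r$.
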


In the following theorems (From Theorem~20 upto Theorem~26), we assign colors to the vertices of 
the circulant graph based on their congruence classes modulo \(3\). As the vertices are already indexed modulo \(2n\), 
Table~\ref{rule} is used to determine the corresponding congruence classes modulo 3 of the adjacent vertices 
\( (j+x)_{2n} \), \( (j+n)_{2n} \), \( (j+2n-x)_{2n} \), \( (j+1)_{2n} \) and \( (j+2n-1)_{2n} \) for any given vertex \( j \).
\vspace{-.5cm}
\begin{table}[h!]
\centering
\begin{tabular}{|c|c|c|}
\hline
\textbf{Adjencies of Vertex \textbf{$j$}} & \multicolumn{2}{c|}{\textbf{Modulo 3 Congruence Classes of the Adjacencies of $j$}} \\ \hline

\multirow{2}{*}{$(j+1)_{2n}$}
& \textbf{if $j\neq 2n-1$} & $(j+1)\pmod{3})$ \\ \cline{2-3}
& \textbf{if $j=2n-1$} & $0$ \\ \hline

\multirow{2}{*}{$(j+2n-1)_{2n}$}
& \textbf{if $j\neq 0$} & $(j-1)\pmod{3})$ \\ \cline{2-3}
& \textbf{if $j=0$} & $2n-1$ \\ \hline

\multirow{2}{*}{$(j+2n-x)_{2n}$}
& \textbf{if $j\geq x$} & $(j-x)\pmod{3})$ \\ \cline{2-3}
& \textbf{if $j< x$} & $(2n-x+j)\pmod{3})$ \\ \hline

\multirow{2}{*}{$(j+x)_{2n}$}
& \textbf{if $j\geq 2n- x$} & $(j+x-2n)\pmod{3})$ \\ \cline{2-3}
& \textbf{if $j< 2n-x$} & $(j+x)\pmod{3})$ \\ \hline

\multirow{2}{*}{$(j+n)_{2n}$}
& \textbf{if $j\geq n$} & $(j-n)\pmod{3})$ \\ \cline{2-3}
& \textbf{if $j< n$} & $(j+n)\pmod{3})$ \\ \hline

\end{tabular}
\vspace{.2cm}
\caption{Modulo 3 Congruence Classes of the Vertices Adjacent to $j$ }
\label{rule}
\end{table}
\vspace{-1cm}

\begin{theorem}\label{19}
Let $G=C_{2n}(a,b,n)$ be a 5-regular circulant graph. If $3$ does not divide any of $a, b, 2n-a, 2n-b$, or $n$, then $G$ is word-representable.
\end{theorem}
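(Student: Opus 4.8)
The plan is to exhibit an explicit proper $3$-coloring of $G$ and then invoke Theorem~\ref{3-col}. The natural candidate is the coloring $c\colon V(G)\to\{0,1,2\}$ defined by $c(j)=j \bmod 3$, since the vertex labels already live in $\mathbb{Z}_{2n}$, and the five hypotheses are precisely the divisibility conditions on the jumps $a$, $b$, and $n$ that one encounters while verifying that this coloring is proper.

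First I would recall that in $G=C_{2n}(a,b,n)$ each vertex $j$ is adjacent exactly to $(j\pm a)_{2n}$, $(j\pm b)_{2n}$, and $(j+n)_{2n}$, the last being a single neighbor because $n=\tfrac{2n}{2}$ has order $2$. Hence it suffices to check properness on the representative edges $\{j,(j+a)_{2n}\}$, $\{j,(j+b)_{2n}\}$, and $\{j,(j+n)_{2n}\}$ as $j$ ranges over $\mathbb{Z}_{2n}$. For a fixed jump $r\in\{a,b\}$ the verification splits into two cases according to whether the edge wraps around modulo $2n$. If $j+r<2n$, then $c((j+r)_{2n})=(j+r)\bmod 3$, so the endpoints receive distinct colors precisely when $3\nmid r$. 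If instead $j+r\ge 2n$, then $(j+r)_{2n}=j+r-2n$, and the two colors differ by $(r-2n)\equiv-(2n-r)\pmod 3$, so properness on these wrap-around edges is equivalent to $3\nmid(2n-r)$. This is exactly where the seemingly redundant hypotheses $3\nmid(2n-a)$ and $3\nmid(2n-b)$ become essential. For the jump $n$, both the non-wrapping edges ($j<n$) and the wrapping edges ($j\ge n$) produce a color difference of $\pm n\pmod 3$, so the single condition $3\nmid n$ handles both. These residue computations are exactly the ones recorded in Table~\ref{rule}, which I would use to present the five cases uniformly.

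Combining these observations, the five hypotheses are precisely the conditions guaranteeing that $c(j)=j\bmod 3$ assigns distinct colors to the endpoints of every edge of $G$; therefore $G$ is $3$-colorable, and Theorem~\ref{3-col} yields that $G$ is word-representable.

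I do not anticipate a serious obstacle here, as the argument is a direct coloring check. The only point requiring care is the case analysis for the wrap-around edges: one must remember that reducing modulo $2n$ shifts the residue modulo $3$ by $-2n$, so the relevant divisibility condition switches from $3\nmid r$ to $3\nmid(2n-r)$. Overlooking this wrap-around shift is the single place where the proof could silently go wrong, which is why I would state the two subcases for each of $a$ and $b$ explicitly rather than asserting properness from $3\nmid a$ and $3\nmid b$ alone.
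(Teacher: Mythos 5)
Your proposal is correct and follows essentially the same route as the paper: color vertex $j$ by $j \bmod 3$ and observe that the five divisibility hypotheses (with $3\nmid(2n-a)$ and $3\nmid(2n-b)$ covering the wrap-around edges) are exactly what is needed for this coloring to be proper, then apply Theorem~\ref{3-col}. Your explicit treatment of the wrap-around subcases makes transparent why the conditions on $2n-a$ and $2n-b$ appear, which the paper handles more tersely by saying the same argument applies to those quantities.
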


\begin{proof}
We define 
$
X_i=\{\, j\in V(G)\mid j\equiv i \pmod{3}\,\}, \quad i=0,1,2.
$
Clearly, $X_0,X_1,X_2$ form a partition of $V(G)$. We assign color $c_i$ to every vertex in $X_i$.  

Let $j\in X_i$. Suppose, for contradiction, $(j+a)_{2n}\in X_i$. Then
\(
j \equiv (j+a)_{2n} \equiv i \pmod{3} \quad\\
\implies  a \equiv 0 \pmod{3},
\)
Which contradicts the assumption $3\nmid a$. The same argument can be applied to $b, 2n-a, 2n-b$, and $n$, ensuring that no two vertices within $X_i$ are adjacent.  
Therefore, $G$ is 3-colorable and by Theorem \ref{3-col}, it follows that $G$ is word-representable.
\end{proof}

\begin{corollary}\label{cnx1}
Let $G=C_{2n}(x,1,n)$ be a 5-regular circulant graph. If $n \equiv x \equiv 1 \pmod{3}$, then $G$ is word-representable.
\end{corollary}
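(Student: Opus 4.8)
The plan is to apply Theorem~\ref{19} directly by verifying that none of the five relevant quantities is divisible by $3$. Since $G = C_{2n}(x,1,n)$, the jump set is $\{x, 1, n\}$, so in the notation of Theorem~\ref{19} we have $a = x$ and $b = 1$. The five quantities to check are therefore $a = x$, $b = 1$, $2n - a = 2n - x$, $2n - b = 2n - 1$, and $n$.

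First I would handle the jump element $b = 1$: clearly $3 \nmid 1$, and consequently $3 \nmid 2n - 1$ as well, since $2n - 1 \equiv -1 \equiv 2 \pmod 3$ regardless of $n$. These two are automatic and require no hypothesis. Next, using the assumption $x \equiv 1 \pmod 3$, I get $3 \nmid x = a$ immediately, and also $2n - x \equiv 2n - 1 \equiv 2 \pmod 3$ (using $n \equiv 1$, so $2n \equiv 2$), hence $3 \nmid 2n - a$. Finally, the hypothesis $n \equiv 1 \pmod 3$ gives $3 \nmid n$ directly.

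Thus all five quantities $x, 1, 2n - x, 2n - 1, n$ are coprime to $3$ in residue, so $3$ divides none of them. The hypotheses of Theorem~\ref{19} are satisfied, and word-representability of $G$ follows at once.

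This corollary is essentially an immediate specialization, so there is no real obstacle; the only care needed is the bookkeeping of residues modulo $3$, in particular confirming that the combined hypotheses $n \equiv x \equiv 1 \pmod 3$ suffice to rule out $3 \mid (2n - x)$ and that the trivial jump $b = 1$ automatically disposes of both $b$ and $2n - b$. One minor point worth stating explicitly is that the congruence conditions are taken on the representatives $0 < x < n$ and the modulus $2n$ is consistent with reducing modulo $3$; since divisibility by $3$ of an integer is unaffected by which residue system modulo $2n$ we use to name the vertex labels, this causes no difficulty.
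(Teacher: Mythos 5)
Your approach is exactly the paper's: the corollary is stated immediately after Theorem~\ref{19} with no separate proof, precisely because it is the direct specialization $a=x$, $b=1$ that you carry out, and your conclusion is correct. However, two of your residue computations are off: the claim that $2n-1\equiv -1\equiv 2\pmod 3$ ``regardless of $n$'' is false (for $n\equiv 2\pmod 3$ one has $3\mid 2n-1$); under the actual hypothesis $n\equiv 1\pmod 3$ the correct values are $2n-1\equiv 1\pmod 3$ and likewise $2n-x\equiv 2\cdot 1-1\equiv 1\pmod 3$, not $2$. Both quantities are still nonzero modulo $3$, so all five conditions of Theorem~\ref{19} hold and the argument goes through once these slips are corrected.
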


\begin{theorem}\label{nx2}
Let $G=C_{2n}(x,1,n)$ be a 5-regular circulant graph. If $n \equiv x \equiv 2 \pmod{3}$, then $G$ is word-representable.
\end{theorem}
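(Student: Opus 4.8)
The plan is to build an explicit semi-transitive orientation of $G$ and invoke Theorem~\ref{semi}. Since $1$ is a jump element, $\gcd(2n,x,1,n)=1$, so $G$ is connected by Theorem~\ref{thm:connected-circulant} and it suffices to orient the single graph $G$. I would colour each vertex $j$ by $c(j)=j\bmod 3$. Because $n\equiv x\equiv 2\pmod 3$ we have $2n\equiv 1\pmod 3$, and I would read off from Table~\ref{rule} how the colour changes along each jump: a $\pm n$ or $\pm x$ step always changes the residue by $\pm 2\not\equiv 0$, while a $\pm 1$ step changes it by $\pm 1$ except at the wrap-around, where $2n-1$ and $0$ both receive colour $0$. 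Hence the mod-$3$ colouring is proper on every edge except the single \emph{seam edge} $\{0,2n-1\}$. (This is exactly why the pure $3$-colouring argument of Theorem~\ref{19} does not apply: for example with $x=2$ the graph contains $C_{2n}^{\,2}$, which is not $3$-colourable when $3\nmid 2n$.)

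With this in hand I would orient every edge from its lower-coloured endpoint to its higher-coloured endpoint, and orient the seam edge as $2n-1\to 0$. Along any directed edge the colour is non-decreasing, and it is strictly increasing on every edge except the seam; since the seam is the only monochromatic edge, no directed cycle can close up, so the orientation is acyclic. The orientation of the seam is not cosmetic: the opposite choice $0\to 2n-1$ produces the genuine shortcut $0\to 2n-1\to n-1\to n$, whose shortcutting edge is $0\to n$ and whose non-adjacent pair is $\{0,n-1\}$, so the direction $2n-1\to 0$ must be used.

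The heart of the argument is to show this orientation is shortcut-free. Because there are only three colours and the unique monochromatic edge is the seam, every directed path has a non-decreasing colour sequence with at most one repeated value, forcing length at most four and, in the length-four case, the pattern $(0,0,1,2)$ whose repeat is exactly the seam. As $2n-1$ is a source, such a path must begin $2n-1\to 0$; the only colour-$1$ neighbour of $0$ is the vertex $1$, and the only colour-$2$ neighbour of $1$ is the vertex $2$, so the path is forced to be $2n-1\to 0\to 1\to 2$. A shortcut on these four vertices would require the shortcutting edge $\{2n-1,2\}$, but $|2n-1-2|=2n-3$ corresponds to the jump $3$, and $3\notin\{1,x,n\}$ since $x\equiv n\equiv 2\pmod 3$; thus $2n-1\nsim 2$. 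Hence no shortcut exists, the orientation is semi-transitive, and $G$ is word-representable by Theorem~\ref{semi}.

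The main obstacle I anticipate is precisely this shortcut analysis at the seam: one must both pick the correct orientation of $\{0,2n-1\}$ and eliminate the unique surviving four-vertex directed path, and it is in verifying the non-adjacency $2n-1\nsim 2$ (equivalently $3\notin\{1,x,n\}$) that the hypothesis $n\equiv x\equiv 2\pmod 3$ is genuinely used. The remaining bookkeeping—confirming via Table~\ref{rule} that $\pm x$ and $\pm n$ never create monochromatic edges, and that $1$ and $2$ are the only relevant colour-$1$ and colour-$2$ neighbours of $0$ and $1$—is routine.
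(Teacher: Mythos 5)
Your proof is correct, and I verified the key steps: with $c(j)=j\bmod 3$ and $2n\equiv 1\pmod 3$, the seam $\{0,2n-1\}$ is indeed the unique monochromatic edge; $2n-1$ becomes a source under your orientation, so any directed path on four or more vertices must begin with the seam and follow the colour pattern $(0,0,1,2)$; the only colour-$1$ out-neighbour of $0$ is $1$ and the only colour-$2$ out-neighbour of $1$ is $2$ (all of $x$, $2n-x$, $n$, $1+x$, $1-x$, $1+n$ land in the wrong residue classes); and $2n-1\nsim 2$ because the corresponding jump is $3\notin\{1,x,n\}$. Your route is genuinely different in execution from the paper's, though it rests on the same underlying facts. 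The paper promotes $2n-1$ to a fourth colour $c_3$, obtains a proper $4$-colouring, embeds $G$ into a semi-transitively oriented $K_4$ with topological order $c_0,c_3,c_2,c_1$, and then checks that the unique shortcutting path of that $K_4$ induces only $0\to 2n-1\to 2n-2\to 2n-3$ in $G$, which fails to be a shortcut because $0\nsim 2n-3$ (again jump $3$). So the paper places $2n-1$ strictly between $0$ and $2n-2$ in the order, whereas you make it a source; your single dangerous path $2n-1\to 0\to 1\to 2$ is the mirror image of theirs, and both are eliminated by the observation that $x\equiv n\equiv 2\pmod 3$ forces $3\notin\{1,x,n\}$. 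What your version buys is economy: you avoid the extra colour and the $K_4$-homomorphism machinery, and the acyclicity and path-length bounds follow immediately from the monotone colour sequence with a single permitted level step. What the paper's version buys is uniformity with its other cases (Theorems~\ref{n0x2} and \ref{n1x2} reuse the same embedding template). One cosmetic point: a $\pm x$ or $\pm n$ step changes the residue by $\pm 2$ or by $\pm 1$ depending on whether it wraps past $2n$, not always by $\pm 2$ as you wrote; either way it is nonzero mod $3$, so nothing breaks.
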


\begin{proof}
We define
\(
X_i=\{\, j\in V(G)\setminus \{2n-1\}\mid j\equiv i \pmod{3}\,\}, \quad i=0,1,2.
\)
Thus, $X_0,X_1,X_2$ form a partition of $V(G)\setminus\{2n-1\}$. We assign color $c_i$ to the vertices of $X_i$ and color the remaining vertex $2n-1$ with $c_3$.
Let $j\neq 0$ be any vertex in $X_i$. Without loss of generality, let $j\in X_0$. Hence, it has color $c_0$. Now,
\(
j+2n-1 \equiv j-1 \pmod{2n} \quad \text{and} \quad j-1 \equiv 2 \pmod{3}.
\)
Thus, $j$ and $j-1$ receive different colors. Since $3$ does not divide $1, x, 2n-x$, or $n$, similar arguments can be applied to all other adjacencies, as in Theorem~\ref{19}.  
If $j=0$, then $0$ is adjacent to $1,2n-1,x,2n-x$. Here, $1 \in X_{1}$ has color $c_{1}$, the vertices $x$ and $2n-x \in X_{2}$ have color $c_{2}$, and $2n-1$ has color $c_{3}$.  
Hence, none of these neighbours has the same color as vertex $0$. Since $2n-1$ is the only vertex having color $c_3$, none of its adjacent vertices has color $c_3$. 
Hence, $G$ is 4-colorable, with only a single vertex (namely $2n-1$) having color $c_3$.

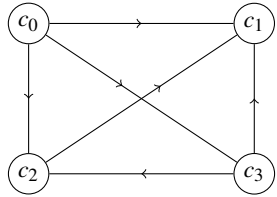
\begin{figure}[h]
\centering
\begin{tikzpicture}[every node/.style={circle, draw, fill=white, inner sep=2pt}]
    \tikzset{
        mid arrow/.style={
            postaction={decorate},
            decoration={markings, mark= at position 0.5 with {\arrow{>}}}
        },
        mid arrow1/.style={
            postaction={decorate},
            decoration={markings, mark= at position 0.4 with {\arrow{>}}}
        },
        mid arrow2/.style={
            postaction={decorate},
            decoration={markings, mark= at position 0.6 with {\arrow{>}}}
        }
    }

    \node (x2) at (0,0) {$c_2$};
    \node (x3) at (3,0) {$c_3$};
    \node (x0) at (0,2) {$c_0$};
    \node (x1) at (3,2) {$c_1$};

    \draw[mid arrow] (x0) -- (x2);
    \draw[mid arrow] (x3) -- (x1);
    \draw[mid arrow] (x3) -- (x2);
    \draw[mid arrow] (x0) -- (x1);

    \draw[mid arrow1] (x0) -- (x3); 
    \draw[mid arrow2] (x2) -- (x1); 
\end{tikzpicture}
\caption{A semi-transitive orientation of \(K_4\)}
\label{k4}
\end{figure}
 We embed the given graph into \(K_4\), the complete graph on four vertices, which is word-representable by Remark \ref{bip}. Let $f: G \rightarrow K_4$ be a homomorphism defined as $f(i)=$ color of the vertex $i$, $\forall i\in \{0,1,\dots,2n-1\}$. We consider an acyclic orientation of \(K_4\) as shown in Figure~\ref{k4}. Clearly, the induced orientation of $G$ through $f$ is acyclic. We claim that this induced orientation is also semi-transitive. Note that the orientation of $K_4$ contains only one shortcutting path: $c_0 \rightarrow c_3 \rightarrow c_2\rightarrow c_1$ with the shortcutting edge: $c_0 \rightarrow c_1$. To prove our claim, it is sufficient to prove that this shortcutting path and edge do not induce any shortcut in $G$. Now, from Table \ref{t1}, it is clear that shortcutting path: $c_0 \rightarrow c_3 \rightarrow c_2 \rightarrow c_1$ induce only one path in G: $0\rightarrow 2n-1\rightarrow 2n-2\rightarrow 2n-3$. But $0\not\sim 2n-3$ as $|2n-3-0| \notin \{1,2n-1,x,2n-x,n\}$. Hence $c_0 \rightarrow c_3 \rightarrow c_2\rightarrow c_1$ with the shortcutting edge: $c_0 \rightarrow c_1$ does not induce any shortcut in $G$. Thus, $G$ is semi-transitive as well as word-representable. 
\begin{table}[h!]
\centering
\begin{tabular}{|c|c|c|c|c|c|c|}
\hline
\textbf{Vertex} & \multicolumn{6}{c|}{\textbf{Adjacent Vertices and Their Colors}} \\ \hline


\multirow{2}{*}{$2n-1$}
& \textbf{Adjacent Vertices} & $0$ & $2n-2$ & $(2n-1+x)_{2n}$ & $(2n-1-x)_{2n}$ & $n-1$ \\ \cline{2-7}
& \textbf{Color} & $c_0$ & $c_2$ & $c_1$ & $c_1$ & $c_1$ \\ \hline

\multirow{2}{*}{$2n-2$}
& \textbf{Adjacent Vertices} & $2n-1$ & $2n-3$ & $(2n-2+x)_{2n}$ & $(2n-2-x)_{2n}$ & $n-2$ \\ \cline{2-7}
& \textbf{Color} & $c_3$ & $c_1$ & $c_0$ & $c_0$ & $c_0$ \\ \hline

\end{tabular}
\vspace{.2cm}
\caption{Adjacencies of the vertex $(2n - 1)$ and $(2n - 2)$ along with their colors}
\label{t1}
\end{table}
\vspace{-1cm}
\end{proof}
\begin{theorem}\label{nx0}
Let $G=C_{2n}(x,1,n)$ be a 5-regular circulant graph. If $n \equiv x \equiv 0 \pmod{3}$ and $x> \tfrac{2n}{3}$, then $G$ is word-representable.
\end{theorem}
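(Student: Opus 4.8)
The plan is to exploit the hypothesis $x>\tfrac{2n}{3}$ through a \emph{position}-based colouring rather than the residue-based colouring used in the neighbouring cases. The reason the mod-$3$ colouring of Theorem~\ref{19}, Corollary~\ref{cnx1} and Theorem~\ref{nx2} cannot be reused is that here $3\mid x$ and $3\mid n$ (hence $3\mid 2n-x$), so each of the three ``long'' jumps $x$, $n$, $2n-x$ \emph{preserves} the residue class modulo $3$. Consequently every $x$-, $n$-, and $(2n-x)$-edge would be monochromatic, and unlike the single bad wrap-around edge repaired in Theorem~\ref{nx2}, this cannot be fixed by one extra colour.

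First I would partition $V(G)=\{0,1,\dots,2n-1\}$ into three consecutive arcs $A_{0}=[0,\tfrac{2n}{3})$, $A_{1}=[\tfrac{2n}{3},\tfrac{4n}{3})$, $A_{2}=[\tfrac{4n}{3},2n)$, each of length $\tfrac{2n}{3}$ (an integer since $3\mid n$). Because $\tfrac{2n}{3}<x<n$, all three long jumps lie in $(\tfrac{2n}{3},\tfrac{4n}{3})$, so each has cycle-distance $\min\{\ell,2n-\ell\}>\tfrac{2n}{3}$. Since any two vertices in the same arc are at cycle-distance strictly less than $\tfrac{2n}{3}$, every $x$-, $n$-, and $(2n-x)$-edge necessarily joins two \emph{different} arcs. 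Thus colouring each arc with its own colour already resolves all long-jump edges; the only edges left are the unit-jump edges $\{j,j+1\}$, which lie inside a single arc except for the three that straddle an arc boundary.

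To also kill the unit-jump edges I would overlay an alternation inside each arc, i.e. pass to the map $f(j)=\bigl(\operatorname{arc}(j),\,j \bmod 2\bigr)$. For a long edge the arc-coordinate differs; for a unit edge the parity-coordinate flips; hence $f$ is a proper colouring with values among at most six colours, giving a homomorphism $f:G\to K_{6}$ into the complete graph on the realised colours, which is word-representable (Remark~\ref{bip}). Fixing an acyclic orientation of $K_{6}$ (a linear order of the colours) and pulling it back orients $G$ acyclically, and it then remains to verify, via an adjacency table in the style of Table~\ref{rule} and Table~\ref{t1}, that the shortcutting configurations of $K_{6}$ induce no shortcut in $G$, so that the induced orientation is semi-transitive (Theorem~\ref{semi}). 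If, moreover, the realised-colour graph turns out to be $3$-colourable, then composing with a $3$-colouring makes $G$ itself $3$-colourable and Theorem~\ref{3-col} applies directly, bypassing any shortcut analysis.

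The main obstacle is precisely the interaction of the unit jump with the three long jumps at the three arc boundaries, where a $+1$-edge changes arcs: the unit jump forces the colour to change at every step, which can reintroduce a monochromatic long-jump edge or an unavoidable shortcut near a boundary. Controlling this---choosing the within-arc alternation phase and the boundary colours so that no long edge becomes monochromatic and no induced shortcut survives---is the delicate step, and it is exactly the strict inequality $x>\tfrac{2n}{3}$ (every long jump strictly longer than a single arc) that leaves enough room to make a globally consistent choice.
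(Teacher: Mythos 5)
Your 6-colouring $f(j)=(\operatorname{arc}(j),\,j\bmod 2)$ is indeed a proper colouring (the arc coordinate separates the jumps $x$, $2n-x$, $n$, all of cycle-distance greater than the arc length $\tfrac{2n}{3}$, and parity separates the unit jumps), but a proper $6$-colouring by itself proves nothing: the entire content of the theorem would then live in the deferred step ``verify that the shortcutting configurations of $K_6$ induce no shortcut in $G$.'' With six linearly ordered colours there are $\binom{6}{4}+\binom{6}{5}+\binom{6}{6}=22$ potential shortcutting paths to rule out, and this analysis is exactly the long, delicate case work the paper has to carry out in Theorem~\ref{n1x2}; nothing in your setup makes it automatic here, and you give no argument for why the pulled-back orientation is shortcut-free. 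Your fallback --- hoping the realised-colour quotient is $3$-colourable --- provably fails in general: take $C_{24}(9,1,12)$ (so $n=12$, $x=9$). The $n$-jump preserves parity and connects every pair of arcs, so $(A_0,0),(A_1,0),(A_2,0)$ form a triangle in the quotient; adding $(A_2,1)$, which is joined to $(A_0,0)$ and $(A_1,0)$ by $x$-jumps (parity flips since $x$ is odd) and to $(A_2,0)$ by a unit edge inside $A_2$, yields a $K_4$. So the quotient has chromatic number at least $4$ and Theorem~\ref{3-col} cannot be reached by composition.

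The missing idea is to choose the arcs and the within-arc refinement so that a genuine $3$-colouring comes out, which is what the paper does. It cuts $V(G)$ into $X=[0,x)$, $Y=[x,2x)$, $Z=[2x,2n)$ (of lengths $x,x,2n-2x$ with $2n-2x<x$), refines each arc by residue modulo $3$, and assigns colours by a \emph{cyclic shift} of the residue-to-colour map from one arc to the next (Table~\ref{color}). Because $3\mid x$ and $3\mid n$, the three long jumps preserve the residue class but always land in a different arc, where that same residue class carries a different colour; and because $3\mid x$ the residue classes are aligned with the arc boundaries, so a unit jump changes the residue and hence the colour whether or not it crosses a boundary. This yields a proper $3$-colouring directly, and Theorem~\ref{3-col} finishes the proof with no semi-transitivity analysis at all. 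Your instinct to use a position-based partition driven by $x>\tfrac{2n}{3}$ is the right one, but pairing it with parity instead of with residues modulo $3$ (cyclically permuted across arcs) is what forces you from $3$ colours up to $6$ and leaves the hard part of the proof undone.
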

\begin{proof}
 We define a partition of \( V(G) \) as follows, 
\(
X = \{0, 1, \dots, x-1\}, \quad 
Y = \{x, x+1, \dots, 2x-1\}, \quad 
Z = \{2x, 2x+1, \dots, 2n-1\}.
\)
For \( i = 0, 1, 2 \), let 
\(
X_i = \{\, j \in X \mid j \equiv i \pmod{3} \,\}.
\)
Analogously, we define \( Y_i \) and \( Z_i \). 
Thus, each of \( X, Y, \) and \( Z \) is further divided into three divisions, namely 
\( X_0, X_1, X_2 \); \( Y_0, Y_1, Y_2 \); and \( Z_0, Z_1, Z_2 \), respectively. 
The coloring corresponding to these divisions is given in Table~\ref{color}. Note that the partition of \(V(G)\) gives \(|X|=|Y|=x\), while 
\(|Z|=2n-2x < x\) because \(x>\tfrac{2n}{3}\).
 Consequently, for any vertex \(j\in V(G)\), the neighbours \((j+x)_{2n}\), \((j+2n-x)_{2n}\),
and \((j+n)_{2n}\) lie in the union of two partitions different from the one containing \(j\).
Therefore, establishing 3-colorability for any one of the partitions \(X\), \(Y\), or \(Z\) is sufficient, 
since the same argument applies symmetrically to all three. Here, we verify it for \(X\).
\begin{figure}[h!]
\centering
\begin{minipage}{0.50\textwidth}
\centering
\renewcommand{\arraystretch}{1.0} 
\begin{tabular}{|c|>{\centering\arraybackslash}m{1cm}|>{\centering\arraybackslash}m{1cm}|>{\centering\arraybackslash}m{1cm}|}
\hline
\textbf{Color} & \multicolumn{3}{c|}{\textbf{Divisions of} \textbf{$X,Y,Z$}} \\ \hline
$c_0$ & $X_0$ & $Y_2$ & $Z_1$ \\ \hline
$c_1$ & $X_1$ & $Y_0$ & $Z_2$ \\ \hline
$c_2$ & $X_2$ & $Y_1$ & $Z_0$ \\ \hline
\end{tabular}
\vspace{.2cm}
\captionof{table}{Color assignments for the divisions of $X, Y, Z$}
\label{color}
\end{minipage}
\hfill
\begin{minipage}{0.45\textwidth}
\centering
\includegraphics[width=0.9\textwidth]{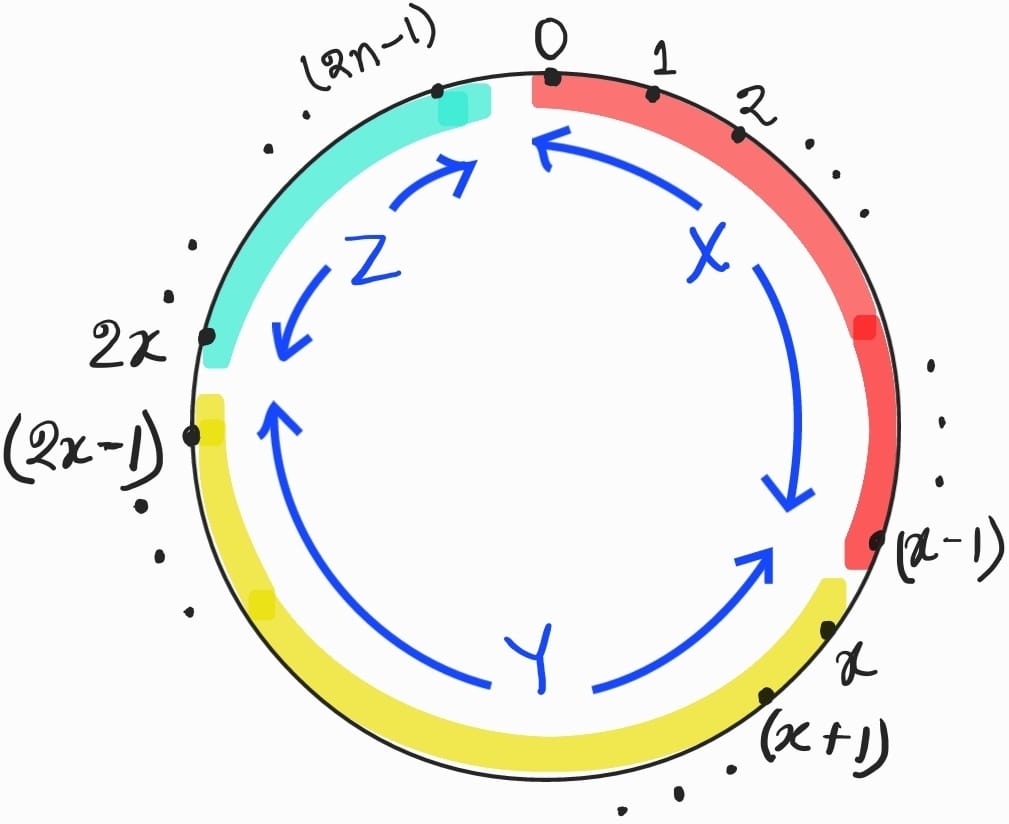}
\captionof{figure}{Schematic structure of the circulant graph with partitions $X,Y$ and $Z$.}
\label{c4}
\end{minipage}
\end{figure}
 \vspace{-.6cm}
 Without loss of generality, let $j\in X_0$. Hence, it has color $c_0$. Since $n\equiv x\equiv0\pmod{3}$ then according to Table \ref{rule}, $ (j+2n-x)_{2n}, (j+x)_{2n}$ and $(j+n)_{2n}$ $\in Y_0\cup Z_0$. Hence, these adjacencies will get colors different from $c_0$. Now, $(j+1)_{2n}\equiv 1\pmod{3}$  implies, $(j+1)_{2n}$ either belongs to $X_1$ or $Y_1$. Hence, it will get either $c_1$ or $c_2$ as its color. Similarly, $(j+2n-1)_{2n}\equiv 2\pmod{3}$ implies, $(j+2n-1)_{2n}$ either belongs to $X_2$ or $Z_2$. Hence, it will get either $c_2$ or $c_1$ as its color. Hence, $G$ is 3-colorable as well as word-representable. 

\end{proof}

\begin{theorem}\label{n0x1}
Let $G=C_{2n}(x,1,n)$ be a 5-regular circulant graph. If $n \equiv 0\pmod{3},  x \equiv 1 \pmod{3}$ and $x< \tfrac{n}{2}$, then $G$ is word-representable.
\end{theorem}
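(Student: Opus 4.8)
The plan is to exhibit an explicit proper $3$-coloring of $G$ and then invoke Theorem~\ref{3-col}. Since $3 \mid n$, the number $2n/3$ is an integer; write $m = n/3$, so that the block length is $2m = 2n/3$. I would partition $V(G)$ into three equal consecutive blocks $B_0 = \{0,\ldots,2m-1\}$, $B_1 = \{2m,\ldots,4m-1\}$, $B_2 = \{4m,\ldots,6m-1\}$ (note $6m = 2n$), and color a vertex $j \in B_i$ by $c(j) = (j+i) \bmod 3$; equivalently $c(j) = (j + \lfloor 3j/(2n)\rfloor)\bmod 3$. The reason for the ``shift by the block index'' is this: the plain assignment $j \bmod 3$ already properly colors the jumps $1$ and $x$ (both $\equiv 1 \pmod 3$), but it makes \emph{every} jump-$n$ edge monochromatic, because $n \equiv 0 \pmod 3$. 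The per-block shifts are introduced precisely to destroy these monochromatic antipodal edges without reintroducing conflicts on the other jumps.

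It then remains to check that $c$ is proper for each jump. For the jump $1$, inside a block the colors of $j$ and $j+1$ differ by $1 \pmod 3$, while across a block boundary the extra shift makes the difference $2 \pmod 3$; in particular the wrap-around edge $\{2n-1,0\}$ is fine, since $c(2n-1) = (2n+1)\bmod 3 = 1 \neq 0 = c(0)$. For the jump $x$ the hypothesis $x < n/2$ enters: it gives $x < 2n/3 = 2m$, the block length, so the interval from $j$ to $j+x$ meets at most one block boundary. Hence $c(j+x) - c(j) \equiv x \equiv 1$ (no boundary crossed) or $\equiv x+1 \equiv 2$ (one boundary crossed) modulo $3$, which is nonzero in either case.

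The main obstacle, and the feature around which the whole scheme is designed, is the jump $n$. Writing $s(k)$ for the block index of the vertex $(k)\bmod 2n$, one computes $c(j+n) - c(j) \equiv n + s(j+n) - s(j) \equiv s(j+n) - s(j) \pmod 3$, so properness of the antipodal edges is exactly the assertion that $j$ and $(j+n)\bmod 2n$ always lie in blocks of different index modulo $3$. Since $n = 3m = \tfrac{3}{2}\cdot 2m$ is one and a half block-lengths, I would verify this by splitting each block into two halves of length $m$ and tracking where $j+n$ lands; the six resulting cases show that adding $n$ always advances the block index by $1$ or $2$ modulo $3$, never by $0$. This is the crux of the argument, the jump-$1$ and jump-$x$ checks being routine by comparison. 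Once all three jumps are verified, $c$ is a proper $3$-coloring, so $G$ is $3$-colorable and therefore word-representable by Theorem~\ref{3-col}.
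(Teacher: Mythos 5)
Your proof is correct, and every step checks out: with $m=n/3$ and three blocks of length $2m$, the jump-$1$ and jump-$x$ edges change the color by $1$ or $2$ modulo $3$ (using $x\equiv 1\pmod 3$ together with $x<2m$, which guarantees at most one block boundary is crossed), and your six half-block cases do show that adding $n=3m$ always advances the block index by $1$ or $2$ modulo $3$, so no antipodal edge is monochromatic. The paper proves the theorem by the same mechanism --- a proper $3$-coloring of the form $j\mapsto (j+\text{block index})\bmod 3$ followed by Theorem~\ref{3-col} --- but over a different partition: its blocks are $X=\{0,\dots,n-1\}$, $Y=\{n,\dots,2n-x-1\}$ and $Z=\{2n-x,\dots,2n-1\}$, of lengths $n$, $n-x$ and $x$, with offsets $0$, $1$ and $2$ respectively. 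That choice makes the jump-$x$, jump-$(2n-x)$ and jump-$n$ checks follow from a list of interval-containment observations, whereas your equal-thirds partition concentrates all the work in the antipodal case. Your version has two concrete advantages: the partition does not depend on $x$, and the only place the bound on $x$ enters is the inequality $x<\tfrac{2n}{3}$, so your single coloring already covers the range $1<x<\tfrac{2n}{3}$, which the paper only reaches in Corollary~\ref{cnox1} by combining Theorem~\ref{n0x1} with the separate morphism construction of Theorem~\ref{20} and Corollary~\ref{com}.
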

\begin{proof}
 We define a partition of \( V(G) \) as follows, 
\(
X = \{0, 1, \dots, n-1\}, \quad 
Y = \{n, n+1, \dots, 2n-x-1\}, \quad 
Z = \{2n-x, 2n-x+1, \dots, 2n-1\}.
\)  
For \( i = 0, 1, 2 \), let 
\(
X_i = \{\, j \in X \mid j \equiv i \pmod{3} \,\}.
\)
Analogously, we define \( Y_i \) and \( Z_i \).
Thus, each of \( X, Y, \) and \( Z \) is further divided into divisions, namely 
\( X_0, X_1, X_2 \); \( Y_0, Y_1, Y_2 \); and \( Z_0, Z_1, Z_2 \), respectively. The coloring corresponding to these divisions is given in Table~\ref{color}.  For additional clarity, we refer to Figure~\ref{c3} for the partitions \(X, Y, Z\). 
(Note that the coloring of the vertices \(n\), \(2n-x\), and \(2n-1\) with color \(c_{3}\) in  Figure \ref{c3} is not applicable in the present context.)

  First we list down some observations which follow from the condition $x<\tfrac{n}{2}$ and the partitions of $V(G)$.
 \begin{enumerate}
     \item For any vertex $j\in X$, $(j+2n-x)_{2n}$  $\in X\cup Z$. This is because, $0\leq j\leq n-1  \implies 2n-x \leq (j+2n-x)_{2n}\leq (3n-x-1)_{2n} \implies 2n-x \leq (j+2n-x)_{2n}\leq n-x-1 < n-1$.
     
     From the interval definitions of \( X, Y, Z \), we obtain the following inclusions:
     
- if \( j \in X \), then \( (j+n)_{2n} \in Y \cup Z \);

- if \( j \in Y \cup Z\), then \( (j+n)_{2n} \in X \);

- if \( j \in Z \), then \( (j+x)_{2n} \in X \);

- if \( j \in Y \), then \( (j+2n-x)_{2n} \in X \cup Y \);

- if \( j \in Y \), then \( (j+x)_{2n} \in Y \cup Z \).

     \item For any vertex $j\in X$, $(j+x)_{2n}$ $\in X\cup Y$. Indeed, since \( 0 \le j \le n-1 \), we have $x \leq (j+x)_{2n}\leq n-1+x< 2n-x-1$ as $x<\tfrac{n}{2}$. Similarly, For any vertex $j\in Z$, $(j+2n-x)_{2n}$ $\in Y$.   
 \end{enumerate}
 Using the above observations together with Table~\ref{color}, we see that for any vertex $ j\in V(G)$ the neighbours 
\( (j+x)_{2n} \), \( (j+2n-x)_{2n} \), and \( (j+n)_{2n} \) always receive colors distinct from that of \( j \).  
It remains to verify that the same holds for the vertices \( j+1 \) and \( j+2n-1 \).  
To check this, we assume without loss of generality that \( j \in X_0 \). Hence, it has color $c_0$ Now, $(j+1)_{2n}\equiv 1\pmod{3}$  implies, $(j+1)_{2n}$ either belongs to $X_1$ or $Y_1$. Hence, it will get either $c_1$ or $c_2$ as its color. Similarly, $(j+2n-1)_{2n}\equiv 2\pmod{3}$ implies, $(j+2n-1)_{2n}$ either belongs to $X_2$ or $Z_2$. Hence, it will get either $c_2$ or $c_1$ as its color. Similar arguments will hold for $Y$ and $Z$. Hence, $G$ is 3-colorable as well as word-representable.  
\end{proof}
\begin{theorem}\label{n0x2}
Let $G=C_{2n}(x,1,n)$ be a 5-regular circulant graph. If $n \equiv 0\pmod{3},  x \equiv 2 \pmod{3}$ and $2<x< \tfrac{n}{2}$, then $G$ is word-representable.
\end{theorem}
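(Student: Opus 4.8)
My plan is to follow the embedding technique of Theorem~\ref{nx2} rather than to argue through a direct $3$-colouring. The obstruction is that the residue-mod-$3$ colouring of Theorem~\ref{19} breaks down here: since $3\mid n$, every antipodal edge $\{j,(j+n)_{2n}\}$ is monochromatic. In the companion case $x\equiv 1$ (Theorem~\ref{n0x1}) this is repaired by the shifted interval colouring of Table~\ref{color}, but when $x\equiv 2\pmod 3$ the same shift re-introduces a clash on every boundary-crossing $+x$ edge, and in fact for small parameters (for instance $n=12$, $x=5$) the graph turns out not to be $3$-colourable at all. I would therefore produce a proper $4$-colouring of $G$, embed $G$ into $K_4$ through the colouring homomorphism $f(i)=\mathrm{colour}(i)$, orient $K_4$ by the semi-transitive orientation of Figure~\ref{k4}, and then show that the induced acyclic orientation of $G$ is shortcut-free; word-representability then follows from Remark~\ref{bip} and Theorem~\ref{semi}.

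For the colouring itself I would start from the interval partition of Theorem~\ref{n0x1}, namely $X=\{0,\dots,n-1\}$, $Y=\{n,\dots,2n-x-1\}$ and $Z=\{2n-x,\dots,2n-1\}$, each refined into residue classes $X_i,Y_i,Z_i$ modulo $3$. The neighbour-location observations proved there depend only on $x<\tfrac n2$ and not on the residue of $x$, so they remain available and tell me, for each vertex $j$, which partitions its five neighbours $(j\pm1)_{2n}$, $(j\pm x)_{2n}$, $(j+n)_{2n}$ occupy. Reading off Table~\ref{rule}, I would colour most vertices with $c_0,c_1,c_2$ by a Table~\ref{color}-type rule and reserve the fourth colour $c_3$ for exactly the short boundary strips (of length controlled by $x$) on which the $+x$ and $+2n-x$ edges would otherwise be monochromatic; the hypothesis $2<x$ is what keeps those strips from overlapping. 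This step must end with a verified proper $4$-colouring, checked class by class via Table~\ref{rule}.

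With the colouring fixed, the induced orientation of $G$ is acyclic because the orientation of Figure~\ref{k4} is, and the only shortcutting path in $K_4$ is $c_0\to c_3\to c_2\to c_1$ with shortcutting edge $c_0\to c_1$ (as already noted in the proof of Theorem~\ref{nx2}). I would then enumerate, with the help of Tables~\ref{rule} and~\ref{t1}, every directed path of $G$ whose colour sequence is $c_0,c_3,c_2,c_1$; since only the $c_3$-strip vertices carry colour $c_3$, these are few. For each it is enough to show that its two endpoints have index difference outside $\{1,\,2n-1,\,x,\,2n-x,\,n\}$, so that the closing edge is absent and no shortcut arises, whence $G$ is semi-transitively orientable and hence word-representable.

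The delicate part is the colouring of the second step. I must choose the $c_3$-set so that it simultaneously destroys every monochromatic $+x$, $+2n-x$ and $+n$ edge left by the three-colour rule and yet is sparse and well-separated enough---this is precisely where $2<x<\tfrac n2$ is used---that the $K_4$-shortcut $c_0\to c_3\to c_2\to c_1$ lifts to only a handful of paths in $G$, all with non-adjacent endpoints. Achieving both properties at once, rather than merely exhibiting some proper $4$-colouring, is the main obstacle, and the three interval boundaries together with the wrap-around at the indices $n-1$, $2n-x-1$ and $2n-1$ will each have to be treated as separate cases, exactly as in Theorems~\ref{nx2} and~\ref{n0x1}.
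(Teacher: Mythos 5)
Your overall strategy is exactly the one the paper uses: keep the interval partition $X=\{0,\dots,n-1\}$, $Y=\{n,\dots,2n-x-1\}$, $Z=\{2n-x,\dots,2n-1\}$ of Theorem~\ref{n0x1} with its mod-$3$ refinement, repair the colouring with a fourth colour $c_3$, embed into $K_4$ via the colouring homomorphism with the orientation of Figure~\ref{k4}, and kill the unique shortcutting path $c_0\to c_3\to c_2\to c_1$ by checking that the endpoints of every lifted path are non-adjacent. So the architecture is right, and your identification of where the hypotheses $2<x<\tfrac n2$ enter is also consistent with the paper.

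The genuine gap is that the proof's central object --- the actual $4$-colouring --- is never produced; you describe the $c_3$-class as ``short boundary strips (of length controlled by $x$)'' and explicitly defer its construction as ``the main obstacle.'' This is not a routine detail one can wave at: if the $c_3$-class is a strip of more than one vertex, you must additionally verify that it is an independent set, and the number of lifted $c_0\to c_3\to c_2\to c_1$ paths grows with the strip length, so the final case analysis is not automatically ``a handful.'' The paper's resolution is much sharper than what you sketch: the $c_3$-class consists of exactly the three vertices $n$, $2n-x$ and $2n-1$ (pairwise non-adjacent because their pairwise differences avoid $\{1,2n-1,x,2n-x,n\}$, which is where $2<x<\tfrac n2$ is used), while all remaining vertices follow the Table~\ref{color1} assignment on $X_i,Y_i,Z_i$; the only failures of properness in the three-colour rule occur at the $\pm1$ edges incident to these three boundary vertices, and recolouring just those three fixes everything. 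With a three-element $c_3$-class the shortcut check splits into three explicit cases (source of the lifted path adjacent to $n$, to $2n-x$, or to $2n-1$), each with a short list of candidate paths whose endpoints are visibly non-adjacent. Until you commit to a concrete $c_3$-set and verify both the properness of the colouring and the finite list of lifted shortcutting paths, the argument is a plan rather than a proof.
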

\begin{proof}
We use the same partition as in Theorem~\ref{n0x1} and follow the coloring scheme in Table~\ref{color1}, 
except that the vertices $n$, $2n-x$, and $2n-1$ are assigned the additional color $c_3$, 
as shown in Figure~\ref{c3}. These three vertices are not adjacent to one another since 
their pairwise differences  $\notin\{1,2n-1,x,2n-x,n\}$.

With this small change, the same observations used in Theorem~\ref{n0x1}, combined with the coloring scheme in Table~\ref{color1}, imply that for every vertex $j\in V(G)$, the neighbours 
$(j+x)_{2n}$, $(j+2n-x)_{2n}$, and $(j+n)_{2n}$ always receive colors different from that of $j$.

Furthermore, for the adjacencies \( (j+1)_{2n} \) and \( (j+2n-1)_{2n} \), the same argument used in Theorem~\ref{n0x1} holds. The only exceptions occur when these neighbours coincide with one 
of the specially colored vertices. Specifically,

-if $ j \in \{2n-2,\; n-1,\; 2n-x-1\} $\;then $(j+1) _{2n}$ has color $ c_3$.

-if $ j \in \{0,\; n+1,\; 2n-x+1\} $\;then $(j+2n-1) _{2n}$ has color $ c_3$.

 Hence, $G$ is 4-colorable.
 \begin{figure}[h!]
\centering
\begin{minipage}{0.36\textwidth}
\centering
\renewcommand{\arraystretch}{1.0}
\begin{tabular}{|c|>{\centering\arraybackslash}m{1cm}|>{\centering\arraybackslash}m{1cm}|>{\centering\arraybackslash}m{1cm}|}
\hline
\textbf{Color} & \multicolumn{3}{c|}{\textbf{Divisions of} \textbf{$X,Y,Z$}} \\ \hline
$c_0$ & $X_0$ & $Y_1$ & $Z_2$ \\ \hline
$c_1$ & $X_1$ & $Y_2$ & $Z_0$ \\ \hline
$c_2$ & $X_2$ & $Y_0$ & $Z_1$ \\ \hline
\end{tabular}
\captionof{table}{Color assignments for the divisions of $X, Y, Z$ except the vertices $n, 2n-x$ and
$2n-1$.}
\label{color1}
\end{minipage}
\hfill
\begin{minipage}{0.45\textwidth}
\centering
\includegraphics[width=0.9\textwidth]{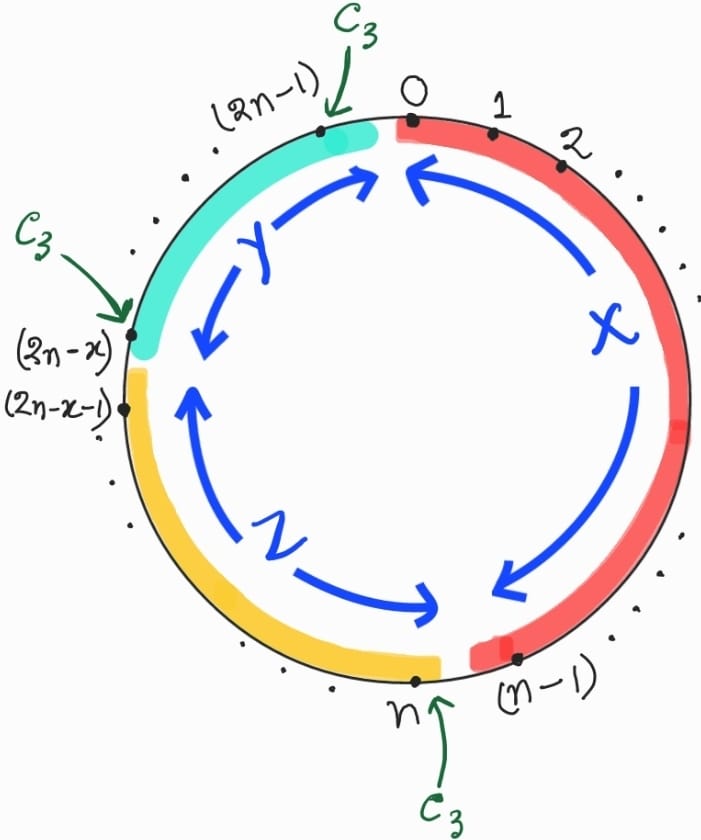}
\captionof{figure}{Schematic structure of the circulant graph with partitions $X,Y,Z$ and vertices $n, 2n-x, 2n-1$ with color $c_3$.}
\label{c3}
\end{minipage}
\end{figure}
 Now, we embed this graph into $K_4$ as we did in Theorem \ref{nx2} using Figure \ref{k4}. We next verify that, the only shortcutting path: $c_0 \rightarrow c_3 \rightarrow c_2\rightarrow c_1$ with the shortcutting edge: $c_0 \rightarrow c_1$, does not induce any shortcut in $G$. The verification is carried out in three cases.

 \medskip
\noindent\textbf{Case 1.}
Assume that the vertex with color \(c_3\) is \(n\).
The corresponding shortcutting paths in \(G\) that follow the color sequence are
$
0 \rightarrow n \rightarrow n-1 \rightarrow n-2
\quad\text{and}\quad
n+1 \rightarrow n \rightarrow n-1 \rightarrow n-2.
$
But, there is no edge between \(0\) and \(n-2\), nor between \(n+1\) and \(n-2\) as their absolute differences  $\notin\{1,2n-1,x,2n-x,n\}$. 

\noindent\textbf{Case 2.}
Assume that the vertex with color \(c_3\) is \(2n-x\).
The corresponding shortcutting paths in \(G\) that follow the color sequence are
$
0 \rightarrow 2n-x \rightarrow 2n-x-1 \rightarrow 2n-x-2
\quad\text{and}\quad
2n-x+1 \rightarrow 2n-x \rightarrow 2n-x-1 \rightarrow 2n-x-2.
$
But, there is no edge between \(0\) and \(2n-x-2\), nor between \(2n-x+1\) and \(2n-x-2\) as their absolute differences  $\notin\{1,2n-1,x,2n-x,n\}$.

\noindent\textbf{Case 3.}
Assume that the vertex with color \(c_3\) is \(2n-1\).
The corresponding shortcutting paths in \(G\) that follow the color sequence are
$
0 \rightarrow 2n-1 \rightarrow 2n-x-1 \rightarrow 2n-x-2, 0 \rightarrow 2n-1 \rightarrow n-1 \rightarrow n-2, 0 \rightarrow 2n-1 \rightarrow 2n-2 \rightarrow 2n-3, 0 \rightarrow 2n-1 \rightarrow 2n-2 \rightarrow 2n-2-x
\quad\text{and}\quad
0 \rightarrow 2n-1 \rightarrow 2n-2 \rightarrow n-2.
$
But, there is no edge between \(0\) and \(2n-x-2\), \(0\) and \(n-2\), \(0\) and \(2n-3\) as their absolute differences  $\notin\{1,2n-1,x,2n-x,n\}$.

\noindent Hence, $G$ is word-representable.
\end{proof}
\begin{theorem}\label{n1x2}
Let $G=C_{2n}(x,1,n)$ be a 5-regular circulant graph. If $n \equiv 1\pmod{3},  x \equiv 2 \pmod{3}$ and $2<x< \tfrac{n}{2}$, then $G$ is word-representable.
\end{theorem}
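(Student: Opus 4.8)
The plan is to imitate the method of Theorem~\ref{n0x2}. I keep the partition $X=\{0,\dots,n-1\}$, $Y=\{n,\dots,2n-x-1\}$, $Z=\{2n-x,\dots,2n-1\}$ together with the interval inclusions established in Theorem~\ref{n0x1}; those inclusions use only the inequality $x<\tfrac n2$ and the definitions of the three arcs, hence remain valid here, so for every vertex $j$ the long neighbours $(j+n)_{2n}$, $(j+x)_{2n}$ and $(j+2n-x)_{2n}$ lie in the same host partitions as before. What differs is purely arithmetic: since $n\equiv1$ and $x\equiv2\pmod3$ we have $2n\equiv2\pmod3$, so in Table~\ref{rule} the residues of these neighbours are shifted differently than in the $n\equiv0$ cases.

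First I would isolate the obstruction. Colouring each $j\in X$ and each $j\in Y$ by $c_{\,j\bmod 3}$ and each $j\in Z$ by $c_{\,(j\bmod 3)+1}$ (indices read mod $3$), in the spirit of Tables~\ref{color}--\ref{color1}, a direct check against Table~\ref{rule} shows this is proper on every edge except the jump-$x$ edges running from the top of $Y$ into $Z$, namely the matching $M=\{\{j,(j+x)_{2n}\}:2n-2x\le j\le 2n-x-1\}$ (equivalently, the jump-$(2n-x)$ edges from $Z$ back into $Y$); on each edge of $M$ both endpoints acquire the same colour. Conceptually this is forced: because $n\equiv1$, the jumps $n$ and $2n-x$ act with the same residue shift on the bulk of $G$, so no uniform residue colouring of the three arcs can separate all long neighbours, and $M$ is exactly the set of monochromatic edges. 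This is the feature distinguishing the present case from the $3$-colourable Theorem~\ref{n0x1}.

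To repair the colouring I would spend one extra colour $c_3$. Writing the edges of $M$ as $\{2n-2x+p,\,2n-x+p\}$ for $p=0,\dots,x-1$, I select the lower endpoint when $p$ is even and the upper endpoint when $p$ is odd; the selected vertices then lie in two arithmetic progressions of common difference $2$, and a short check (the only lower--upper pair that could be adjacent under jump $1$ or jump $x$ occurs at the junction and is excluded by the parity rule, while no jump-$n$ edge lies inside this arc) shows the resulting transversal $T$ is an independent set of $G$. Recolouring every vertex of $T$ by $c_3$ breaks all edges of $M$, and because $T$ is independent and $c_3$ is fresh, no new monochromatic edge is created; hence $G$ is properly $4$-coloured.

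Finally, as in Theorems~\ref{nx2} and~\ref{n0x2}, I would let $f\colon G\to K_4$ send each vertex to its colour, orient $K_4$ by Figure~\ref{k4}, and use that the pulled-back orientation of $G$ is acyclic with the single shortcutting path $c_0\to c_3\to c_2\to c_1$ and shortcutting edge $c_0\to c_1$. It then suffices to show that for every $c_3$-vertex $v\in T$ the lifts of this colour-path through $v$, i.e.\ the directed paths $a\to v\to b\to d$ in $G$ carrying the colours $c_0,c_3,c_2,c_1$, never satisfy $a\sim d$, so that the shortcutting edge is absent and no shortcut is induced. This last verification is the main obstacle: since $\lvert T\rvert=x$, rather than the three exceptional vertices of Theorem~\ref{n0x2}, it must be done uniformly over the whole transversal, splitting into cases according to whether $v$ is the lower ($Y$-side) or upper ($Z$-side) endpoint of its edge of $M$ and whether $v$ sits in the interior or at an end of the arc; in each case one checks that $\lvert a-d\rvert\notin\{1,\,2n-1,\,x,\,2n-x,\,n\}$. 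Once these cases are cleared the induced orientation is semi-transitive, and Theorem~\ref{semi} shows that $G$ is word-representable.
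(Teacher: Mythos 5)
Your approach is genuinely different from the paper's (which uses the two-arc partition $X=\{0,\dots,2n-x-1\}$, $Y=\{2n-x,\dots,2n-1\}$, a $6$-colouring, and an embedding into $K_6$ with a $22$-case shortcut analysis), and the parts you actually carry out are correct: with $n\equiv1$, $x\equiv2\pmod 3$ the colouring $X_i,Y_i\mapsto c_i$, $Z_i\mapsto c_{i+1}$ is indeed proper except on the jump-$x$ matching $M$ between $\{2n-2x,\dots,2n-x-1\}$ and $Z$, and your parity transversal $T$ is an independent set, so the $4$-colouring is valid. However, the step you defer --- that every lift $a\to v\to b\to d$ of $c_0\to c_3\to c_2\to c_1$ has $a\not\sim d$ --- is false, and this is not a routine verification but the point where the construction breaks. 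Take $n=13$, $x=5$, so $G=C_{26}(5,1,13)$ and $T=\{16,18,20,22,24\}$. Then $3$ has colour $c_0$, $16\in T$ has colour $c_3$, $17$ has colour $c_2$, and $4$ has colour $c_1$; the directed path $3\to16\to17\to4$ (edges of jumps $13$, $1$, $13$) is completed by the shortcutting edge $3\to4$, while $3\not\sim17$ and $16\not\sim4$. This is exactly a shortcut, so the pulled-back orientation is not semi-transitive.

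The failure is structural rather than an unlucky choice of example. For any $j$ the quadruple $\{j,\,j+1,\,(j+n)_{2n},\,(j+n+1)_{2n}\}$ induces a chordless $4$-cycle in $C_{2n}(x,1,n)$ (since $2<x<n/2$ excludes the chords), and whenever its four vertices receive the four distinct colours in the order $c_0,c_1,c_3,c_2$ the transitive-tournament orientation of $K_4$ pulls back to a semi-cycle on it, i.e.\ a shortcut. With your colouring this is forced already at $v=2n-2x$ (the $p=0$ lower endpoint, always in $T$): its jump-$n$ neighbour gets $c_0$, $v+1$ gets $c_2$, and $v+1-n$ gets $c_1$. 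So no reassignment of the single shortcutting path of $K_4$ can rescue the argument; any $4$-colouring used this way must in addition avoid rainbow-colouring these $C_4$'s in the bad cyclic order, and your transversal does not. This is precisely the difficulty the paper sidesteps by keeping six colour classes so that the exceptional analysis can be localized; to salvage your route you would need either a transversal chosen to respect the $\{j,j+1,j+n,j+n+1\}$ squares or a different target orientation, neither of which is supplied.
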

\begin{proof}
We define a partition of \( V(G) \) as follows, 
\(
X = \{0, 1, \dots, 2n-x-1\}, \quad 
Y = \{2n-x, 2n-x+1, \dots, 2n-1\}.
\)
For \( i = 0, 1, 2 \), let 
\(
X_i = \{\, j \in X \mid j \equiv i \pmod{3} \,\}.
\)
Analogously, we define \( Y_i \).
Thus, each of \( X \) and \( Y \) is further divided into divisions, namely 
\( X_0, X_1, X_2 \) and \( Y_0, Y_1, Y_2 \), respectively.  We assign color $c_i$ to the vertices of $X_i$ and  $d_i$ to the vertices of $Y_i.$ Since, $|Y|=x$ for $j\in Y$, $ (j+2n-x)_{2n}, (j+x)_{2n}$ and $(j+n)_{2n}$ $\in X$. Hence, they will get colors  different from that of $j$. Without loss of generality, let $j\in Y_0$. Hence, it has color $d_0$. Now, $(j+1)_{2n}\equiv 1\pmod{3}$  implies, $(j+1)_{2n}$ either belongs to $Y_1$ or $X$. Hence, it will get a color different from $d_0$. Similarly, $(j+2n-1)_{2n}\equiv 2\pmod{3}$ implies, $(j+2n-1)_{2n}$ either belongs to $Y_2$ or $X$. Hence, it receives a color distinct from $d_0$. The same argument applies to any vertex $j \in X$ with respect to its neighbors $(j+1)_{2n}$ and $(j+2n-1)_{2n}$, when $j\in X_0$. Hence, it suffices to prove that the same holds for any $j\in X$ with respect to its neighbours $(j+x)_{2n},(j+2n-x)_{2n}$ and $(j+n)_{2n}$. Without loss of generality, let $j\in X_0$. Hence, it has color $c_0$.It follows from the definition of $X,Y$ and Table \ref{rule}, that for any $j\in X_{0},$ its neighbours $(j+1)_{2n}$ and $(j+2n-1)_{2n}$ $\in X_{1}\cup X_{2}\cup Y$. Hence, Their colors are different from $c_0$. Hence, $G$ is $6$-colorable.

\begin{figure}[h!]
\centering
\begin{minipage}{0.43\textwidth}
\centering
\includegraphics[width=0.9\textwidth]{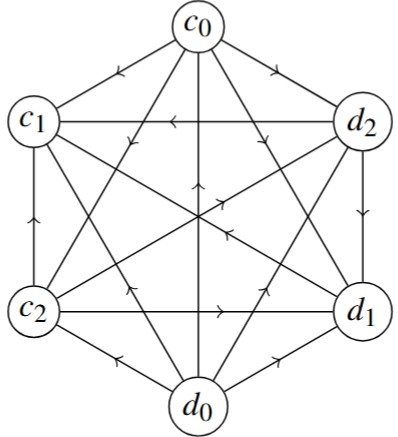}
\captionof{figure}{A semi-transitive orientation of $K_6$.}
\label{k6}
\end{minipage}
\hfill
\begin{minipage}{0.49\textwidth}
\centering
\includegraphics[width=0.9\textwidth]{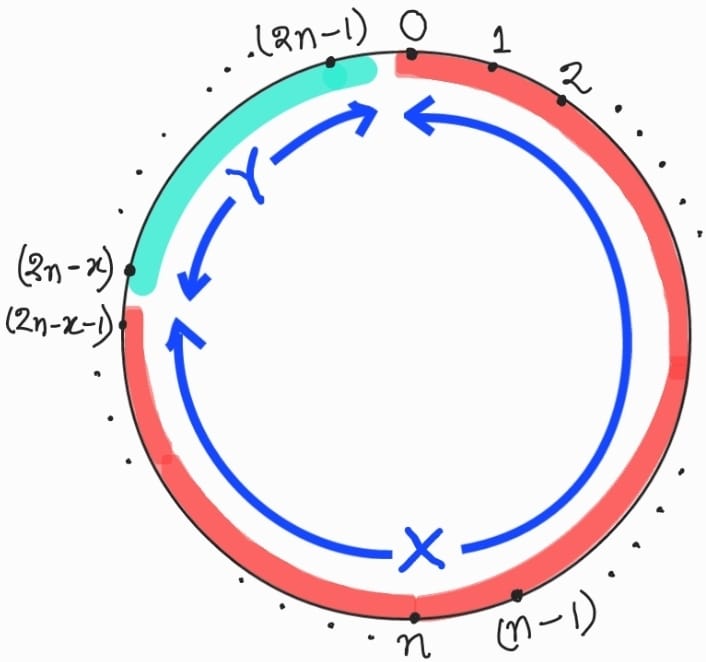}
\captionof{figure}{Schematic structure of the circulant graph with partitions $X$ and $Y$.}
\end{minipage}
\end{figure}

We now embed this graph $G$ into $K_6$, complete graph on 6 vertices using homomorphism $f: G \rightarrow K_6$ such that $f(i)=$ color of the vertex $i$, $\forall i\in \{0,1,\dots,2n-1\}$. We consider an acyclic orientation of \(K_6\) as shown in Figure~\ref{k6}. Clearly, the induced orientation of $G$ through $f$ is acyclic. We claim that this induced orientation is also semi-transitive.
Note that, topological sorting of the orientation in Figure \ref{k6} is $d_0 \rightarrow c_0 \rightarrow d_2\rightarrow c_2\rightarrow d_1\rightarrow c_1$. Since length of any shortcut is $\geq 4$, the number of shortcutting paths $=\binom{6}{4} + \binom{6}{5} + \binom{6}{6}
=22.$ Next, we state a claim that allows us to eliminate \(13\) possible cases of shortcutting paths in \(G\).
\begin{claim}
   \textbf{1.} For any path in \(G\) which is induced by 
\begin{itemize}
    \item  \(d_{2} \rightarrow c_{2} \rightarrow  d_{1}\) , the \(c_{2}\)-colored vertex in this path has no \(d_{1}\)-colored neighbor.
   \item \(d_{0} \rightarrow c_{0} \rightarrow  d_{2}\) , the \(c_{0}\)-colored vertex in this path has no \(d_{2}\)-colored neighbor.
    \item  \(d_{0} \rightarrow c_{2} \rightarrow  d_{1}\) , the \(c_{2}\)-colored vertex in this path has no \(d_{1}\)-colored neighbor.
\end{itemize}
\textbf{2.} For any path in \(G\) which is induced by  \(d_{0} \rightarrow c_{0} \rightarrow  c_{2}\rightarrow  d_{1}\) , the \(c_{2}\)-colored vertex in this path has no \(d_{1}\)-colored neighbor.\\
\smallskip
\textbf{3.} The path \(d_{0} \rightarrow c_{0} \rightarrow  c_{2}\rightarrow  d_{1}\) does not induce any shortcut in $G$.\\
\end{claim}
\vspace{-.5cm}
\textit{Proof of the Claim:}
\textbf{1.} The proofs for the three subcases of the claim follow by similar reasoning. Therefore, without loss of generality, we provide the proof for the first subcase. Let \(j \in Y\) be any \(d_{2}\)-colored vertex. By Table~\ref{rule}, the unique 
\(c_{2}\)-colored neighbor of a \(d_{2}\)-colored vertex is \((j+x)_{2n}\). Since 
\(|Y| = x\), we have \((j+x)_{2n} \in X\), and moreover \((j+x)_{2n} < x\). 

Since \((j+x)_{2n} < x\) and \(|X| = 2n - x > 2n - \tfrac{n}{2} = \tfrac{3n}{2}\), all 
neighbors of \((j+x)_{2n}\)—namely \((j+2x)_{2n}\), \((j+x+2n-1)_{2n}\), \((j+x+1)_{2n}\), and \((j+n)_{2n}\)—also lie 
in \(X\). However, every \(d_{1}\)-colored vertex belongs to \(Y\). Therefore, 
\((j+x)_{2n}\) has no neighbor with color \(d_{1}\).

Hence no further extension of these paths consistent with the topological sorting  
\(d_{0} \rightarrow c_{0} \rightarrow d_{2} \rightarrow c_{2} \rightarrow d_{1} \rightarrow c_{1}\)  
can induce a shortcut in \(G\). Therefore using this, we can  discard a total of 
\(7 + 5 + 1 = 13\) potential shortcutting paths.

\textbf{2.} According to Table \ref{rule}, there are two possible induced paths in $G$ given below.
\vspace{-.1cm}
\begin{itemize}
    \item \(j \rightarrow (j+x)_{2n} \rightarrow  (j+x-1)_{2n}\),
    \item \(j \rightarrow (j+x)_{2n} \rightarrow  (j+2x)_{2n}\).
    
\end{itemize}
Now, the first case can be proved exactly as before. For the second case, since 
\(|X|>\tfrac{3n}{2}\), all neighbors of \((j+2x)_{2n}\) except \((j+2x+n)_{2n}\) lie in \(X\). 
Moreover, \((j+2x+n)_{2n} \in X_{0} \cup Y_{0}\). Therefore, 
\((j+2x)_{2n}\) has no neighbor with color \(d_{1}\).
Using this, we discard two additional potential shortcutting paths—namely \(d_{0} \rightarrow c_{0} \rightarrow  c_{2}\rightarrow  d_{1}\) and \(d_{0} \rightarrow c_{0} \rightarrow  c_{2}\rightarrow  d_{1}\rightarrow  c_{1}\).

\textbf{3.} According to Table \ref{rule}, there are two possible cases.
\vspace{-.1cm}
\begin{itemize}
   \item If \( j \neq (2n - x) \), then the \(c_{0}\)-colored vertex in this path has no \(d_{1}\)-colored neighbor, and the argument proceeds exactly as in \textbf{1}.

\item If \( j = (2n - x) \), then the induced path in \(G\) is 
\(
(2n - x) \;\rightarrow\; 0 \;\rightarrow\; (2n - 1) \;\rightarrow\; (x - 1).
\)
However, there is no edge between \(2n - x\) and \(x - 1\), since 
\(
|(2n - x) - (x - 1)| = |2n - 2x + 1|
\)
 \(\in \{1,\, 2n-1,\, x,\, 2n-x,\, n\}\). Thus, no shortcut arises in this case.

\end{itemize}

Up to this point, we have eliminated a total of \(13 + 2 + 1 = 16\) potential shortcutting paths.  
The remaining possible $6$ cases are 
(i) $d_{0} \rightarrow c_{0} \rightarrow  c_{2}\rightarrow  c_{1}$, (ii) $d_{0} \rightarrow d_{2} \rightarrow  c_{2}\rightarrow  c_{1}$, (iii) $d_{0} \rightarrow d_{2} \rightarrow  d_{1}\rightarrow  c_{1}$, (iv) $c_{0} \rightarrow c_{2} \rightarrow  d_{1}\rightarrow  c_{1}$, (v) $c_{0} \rightarrow d_{2} \rightarrow  c_{2}\rightarrow  c_{1}$ and (vi) $c_{0} \rightarrow d_{2} \rightarrow  d_{1}\rightarrow  c_{1}$.Since the arguments for all remaining cases are analogous, we prove (i) without loss of generality.

In this case, the induced path in \(G\) is either 
\(
j \rightarrow (j+x)_{2n} \rightarrow (j+x-1)_{2n} \rightarrow (j+2x-1)_{2n}
\)
or 
\(
j \rightarrow (j+x)_{2n} \rightarrow (j+2x)_{2n} \rightarrow (j+3x)_{2n}.
\)
In both paths, the absolute difference between the first and the last vertices   
\(\notin \{1,\, 2n-1,\, x,\, 2n-x,\, n\}\). Consequently, all \(22\) potential shortcutting paths fail to create a shortcut, and hence \(G\) is word-representable.
\end{proof}
\begin{theorem}\label{n2x1}
Let $G=C_{2n}(x,1,n)$ be a 5-regular circulant graph. If $n \equiv 2\pmod{3},  x \equiv 1 \pmod{3}$ and $x< \tfrac{n}{2}$, then $G$ is word-representable.
\end{theorem}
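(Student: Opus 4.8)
The plan is to follow the template of Theorem~\ref{n1x2}: split $V(G)$ into two consecutive blocks and colour each block by residues modulo $3$, but with two \emph{disjoint} colour families so that the troublesome ``wrap-around'' edges become bichromatic for free. Concretely, I would set $X=\{0,1,\dots,2n-x-1\}$ and $Y=\{2n-x,\dots,2n-1\}$, give each $j\in X$ the colour $c_{\,j\bmod 3}$ and each $j\in Y$ the colour $d_{\,j\bmod 3}$. The reason a plain residue colouring is doomed here is exactly the hypothesis $n\equiv 2,\ x\equiv 1\pmod 3$: since then $2n\equiv 1\pmod 3$, we get both $2n-1\equiv 0$ and $2n-x\equiv 0\pmod 3$, so the jump-$1$ edge $\{2n-1,0\}$ and every wrapped jump-$x$ edge $\{j,(j+x)_{2n}\}$ with $j\ge 2n-x$ would join two vertices of the same residue. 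Peeling the top block $Y$ off into a second colour family repairs all of these obstructions at once; note that $x\equiv 1\pmod 3$ together with $x\neq 1$ forces $x\ge 4$, so each block meets all three residues and all six colours are used.

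First I would check that this is a proper $6$-colouring. Because $|Y|=x<\tfrac n2<n$, for every $j\in Y$ the neighbours $(j\pm x)_{2n}$ and $(j+n)_{2n}$ leave $Y$ and land in $X$, hence carry $c$-colours and differ from the $d$-colour of $j$; the only surviving edges inside $Y$ are jump-$1$ edges between consecutive residues. Inside $X$ one uses $1\equiv x\equiv 1$ and $n\equiv 2\pmod 3$, so each jump $1,x,n$ shifts the residue and therefore the colour, while any boundary or wrapped edge instead crosses into $Y$ and so into the other family. Thus $G$ is $6$-colourable. Since $K_6$ is word-representable by Remark~\ref{bip}, Theorem~\ref{semi} gives it a semi-transitive orientation; I would fix an acyclic orientation of $K_6$ analogous to Figure~\ref{k6} and pull it back along the colouring homomorphism $f(j)=\text{colour}(j)$, obtaining an acyclic orientation of $G$.

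It then remains to prove this induced orientation is shortcut-free. As in Theorem~\ref{n1x2} there are $\binom{6}{4}+\binom{6}{5}+\binom{6}{6}=22$ colour-patterns that could carry a shortcut, and for each I would use Table~\ref{rule} to write out the actual vertex sequences in $G$ realising that pattern. The engine is a short list of pruning claims in the spirit of the Claim in Theorem~\ref{n1x2}: because $Y$ is short, a vertex $w\in X$ reached from $Y$ by a jump-$x$ step satisfies $w<x$, and then $|X|=2n-x>\tfrac{3n}{2}$ forces every neighbour of $w$ except its predecessor (and, in the single boundary case $w=0$, one further vertex) to lie in $X$; consequently $w$ has no neighbour of the $d$-colour needed to continue the pattern. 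This kills the bulk of the $22$ cases outright, and for the few survivors I would verify directly that the source $v_1$ and sink $v_k$ of the induced path satisfy $|v_1-v_k|\notin\{1,\,2n-1,\,x,\,2n-x,\,n\}$, so that $v_1\nsim v_k$ and no shortcut arises. Eliminating all $22$ cases makes the orientation semi-transitive, whence $G$ is word-representable by Theorem~\ref{semi}.

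The hard part will be this shortcut verification, and inside it the boundary vertices $0$, $2n-x$ and $2n-1$, where the uniform adjacency rules of Table~\ref{rule} break down so that the jump-$1$ point-clash and the two block boundaries must be handled by hand. The delicate step is to choose the topological order on the six colours so that as many transitions as possible are annihilated by the pruning claims, and then to confirm that every induced path of length at least four passing through these three exceptional vertices fails to close into a shortcut. This is the $(n,x)\equiv(2,1)$ analogue of the three-part Claim used in Theorem~\ref{n1x2}, and re-deriving the blocked colour-transitions from the present residue pattern is where the real work lies.
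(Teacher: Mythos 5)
Your proposal follows essentially the same route as the paper: the paper's proof of this theorem literally says to repeat the argument of Theorem~\ref{n1x2} (same two-block partition $X=\{0,\dots,2n-x-1\}$, $Y=\{2n-x,\dots,2n-1\}$, same $c_i/d_i$ residue colouring, same embedding into $K_6$ and the same $22$-case shortcut check), the only new content being the choice of topological sorting $d_0\rightarrow c_0\rightarrow d_1\rightarrow c_1\rightarrow d_2\rightarrow c_2$ on the six colours. You correctly identify that selecting this order is the one genuinely new step, so your plan matches the paper's; supplying that explicit ordering is all that remains.
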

\begin{proof}
    The proof proceeds exactly as in Theorem~\ref{n1x2}, provided we use the orientation of \(K_6\) that follows the topological sorting
\(
d_{0} \;\rightarrow\; c_{0} \;\rightarrow\; d_{1} \;\rightarrow\; c_{1} \;\rightarrow\; d_{2} \;\rightarrow\; c_{2}.
\)

\end{proof}
\subsection{Word-Representability and the Representation Number of 5-Regular Circulant Graph Using Morphisms}\label{mor}
This section addresses the word-representability and the representation number of certain subclasses of \(5\)-regular circulant graphs. The proofs primarily follow the approach via morphisms as presented in \cite{srinivasan2025semi}. For any two words \(w\) and \(u\), we write
\(
\underbrace{w}_{u}
\)
to indicate that \(u\) is a subword of \(w\).  
Moreover, for two letters \(y, z\) in a word \(w\), we write \(y \prec_w z\) if \(y\) occurs to the left of \(z\) in \(w\).

\begin{theorem}\label{20}
    Let $G=C_{2n}(x,1,n)$ be a 5-regular circulant graph. Then $G$ is word-representable for $\frac{n}{2} < x \leq \frac{2n}{3}$ and $R(G) \leq 5.$
    \begin{proof}
        
Consider the connected $5$-regular circulant graph $G \cong C(x,1,n)$ with 
\(
\frac{n}{2} < x \leq \frac{2n}{3}.
\)  Since $G$ is a 5-regular circulant graph, then $1<x<n$ and hence $n\geq3.$
The vertex set of $G$ is 
\(
V(G) = \{0,1,2,\ldots,n-1\}.
\) 
By the definition of a circulant graph, each vertex $i \in V(G)$ is adjacent to each of
\(
(i+1)_{2n}, (i-1)_{2n}, (i+x)_{2n},  (i-x)_{2n}\) and \((i+n)_{2n}.
\) Define a morphism $f : V(G)^{\ast} \to V(G)^{\ast}$ by
\[
f(i) = (i)_{2n}(i-1)_{2n}(i-x)_{2n}(i+n)_{2n}(i+x)_{2n},
\forall i \in V(G).
\]
We show that the word \(f(w)\) represents \(G\), where
\(
w = 0\,1\,2\,\ldots\,(n-1).
\)
For each \(0 \leq i \leq n-1\), let
\(
w_i = i\,(i+1)\,\ldots\,(n-1)\,0\,1\,\ldots\,(i-1).
\)
By Proposition~\ref{pr1}, \(f(w_i)\) represents \(G\) for all \(0 \leq i \leq n-1\). Consequently, two vertices \(i\) and \(j\) alternate in \(f(w)\) if and only if they alternate in \(f(w_i)\) for every \(i\).
From the definition of the morphism \(f\), each vertex \(i\) occurs exactly once in each of the words \(f(i)\), \(f((i+1)_{2n})\), \(f((i+x)_{2n})\), \(f((i+n)_{2n})\), and \(f((i-x)_{2n})\). Hence, \(f(w)\) is a 5-uniform word. Moreover,
\[
f(w_i) = (i)\,(i-1)_{2n}\,(i-x)_{2n}\,(i+n)_{2n}\,(i+x)_{2n}\,(i+1)_{2n}\,(i)\,\ldots\,(i-1)_{2n}\,(i-2)_{2n}\,(i-1-x)_{2n}\]\[\hspace{-6,1cm}
\,(i-1+n)_{2n}\,(i-1+x)_{2n}.
\]

For any vertex \(i \in V(G)\), the occurrence of the factor \(f(i)\,f(i+1)\) in \(f(w_i)\) guarantees that \(i\) does not alternate with any vertex
\(
j \in V(G) \setminus \{(i+1)_{2n},\; (i-1)_{2n},\; (i+x)_{2n},\; (i-x)_{2n},\; (i+n)_{2n}\}.
\)
Therefore, it is sufficient to verify that \(i\) alternates with each vertex
\(
j \in \{(i+1)_{2n},\; (i-1)_{2n},\; (i+x)_{2n},\; (i-x)_{2n},\; (i+n)_{2n}\}
\)
within \(f(w_i)\), for all \(0 \leq i \leq n-1\).

\textbf{Case(1):} Here we will show the alternation between $i$ and $(i+1)_{2n}$ in $f(w_i)$.
Since $\tfrac{n}{2} < x \leq \tfrac{2n}{3}$ and $1<x<n$, we have
\(
(i+x+1)_{2n} \;\preceq_{w_i}\; (i+n)_{2n}\) and
\( 
(i+n+1)_{2n} \;\preceq_{w_i}\; (i-x)_{2n}.
\)

If possible let, $(i+n)_{2n} \;\prec_{w_i}\; (i+x+1)_{2n}$ and $(i-x)_{2n} \;\prec_{w_i}\; (i+n+1)_{2n}$, i.e, $i+n<i+x+1$ and $2n+i-x<i+n+1$. Both of these lead to $n<x+1$, which is a contradiction as $x<n$.  
Hence, for every $0 \leq i \leq 2n-1$, the following ordering holds in $w_i$:  
\[
i \prec_{w_i}(i+1)_{2n}\prec_{w_i}(i+2)_{2n}\prec_{w_i}(i+x)_{2n}\prec_{w_i}(i+x+1)_{2n}\preceq_{w_i}(i+n)_{2n}\prec_{w_i}(i+n+1)_{2n}\preceq_{w_i}\]\[\hspace{-8.5cm}(i-x)_{2n}\prec_{w_i}(i-x+1)_{2n}.\]

By the definition of the morphism $f$, we have four subcases,
\begin{itemize}
    \item $(i+x+1)_{2n}=(i+n)_{2n}$ and $(i+n+1)_{2n}=(i-x)_{2n}$
    \[
f(w_i) 
= \underbrace{f(i)}_{i}\;\;
   \underbrace{f(i+1)}_{(i+1)_{2n},i}\;\;
   \underbrace{f(i+2)}_{(i+1)_{2n}}\cdots
   \underbrace{f(i+x)}_{i}\;\;
   \underbrace{f(i+x+1)}_{(i+1)_{2n},i}\;\;
   \underbrace{f(i+n+1)}_{(i+1)_{2n},i}\cdots
   \underbrace{f(i-x+1)}_{(i+1)_{2n}}.
\]
    \item $(i+x+1)_{2n}\neq(i+n)_{2n}$ and $(i+n+1)_{2n}=(i-x)_{2n}$
    \[
f(w_i) 
= \underbrace{f(i)}_{i}\;\;
   \underbrace{f(i+1)}_{(i+1)_{2n},i}\;\;
   \underbrace{f(i+2)}_{(i+1)_{2n}}\cdots
   \underbrace{f(i+x)}_{i}\;\;
   \underbrace{f(i+x+1)}_{(i+1)_{2n}}\cdots
   \underbrace{f(i+n)}_{i}
   \underbrace{f(i+n+1)}_{(i+1)_{2n},i}\cdots\]
   \[\hspace{-7cm}\underbrace{f(i-x+1)}_{(i+1)_{2n}}.
\]

    \item $(i+x+1)_{2n}=(i+n)_{2n}$ and $(i+n+1)_{2n}\neq(i-x)_{2n}$
    \[
f(w_i) 
= \underbrace{f(i)}_{i}\;\;
   \underbrace{f(i+1)}_{(i+1)_{2n},i}\;\;
   \underbrace{f(i+2)}_{(i+1)_{2n}}\cdots
   \underbrace{f(i+x)}_{i}\;\;
   \underbrace{f(i+x+1)}_{(i+1)_{2n},i}\;\;
   \underbrace{f(i+n+1)}_{(i+1)_{2n}}\cdots\cdots
   \underbrace{f(i-x)}_{i}\]\[\hspace{-7cm}
   \underbrace{f(i-x+1)}_{(i+1)_{2n}}.
\]
    \item $(i+x+1)_{2n}\neq(i+n)_{2n}$ and $(i+n+1)_{2n}\neq(i-x)_{2n}$
     \[
f(w_i) 
= \underbrace{f(i)}_{i}\;\;
   \underbrace{f(i+1)}_{(i+1)_{2n},i}\;\;
   \underbrace{f(i+2)}_{(i+1)_{2n}}\cdots
   \underbrace{f(i+x)}_{i}\;\;
   \underbrace{f(i+x+1)}_{(i+1)_{2n}}\cdots
   \underbrace{f(i+n)}_{i}\;\;
   \underbrace{f(i+n+1)}_{(i+1)_{2n}}\cdots\]
   \[\hspace{-6cm}
   \underbrace{f(i-x)}_{i}
   \underbrace{f(i-x+1)}_{(i+1)_{2n}}.\]
\end{itemize}

Hence, $i$ and $(i+1)_{2n}$ alternates in $f(w_i)$ for all $0\leq i\leq 2n-1$

The alternation between $i$ and $(i-1)_{2n}$, $i$ and $(i+x)_{2n}$, $i$ and $(i-x)_{2n}$, as well as $i$ and $(i+n)_{2n}$ can be proved in a similar manner. Hence, in all cases, the required alterations hold. Thus, the word $f(w)$ represents the graph $G$ and consequently $R(G)\leq 5.$ For a detailed proof, we refer the reader to the appendix.

\end{proof}
\end{theorem}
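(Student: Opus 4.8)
The plan is to construct an explicit $5$-uniform word representing $G$ by means of a morphism, following the strategy of Srinivasan and Hariharasubramanian (\cite{srinivasan2025semi}). Recall that in $G=C_{2n}(x,1,n)$ every vertex $i$ is adjacent exactly to $(i\pm 1)_{2n}$, $(i\pm x)_{2n}$ and $(i+n)_{2n}$, so the vertex set is $\{0,1,\dots,2n-1\}$. First I would define a morphism $f:V(G)^\ast\to V(G)^\ast$ by $f(i)=(i)_{2n}(i-1)_{2n}(i-x)_{2n}(i+n)_{2n}(i+x)_{2n}$, so that each block begins with $i$ itself and then records four of its five neighbours; the fifth neighbour $(i+1)_{2n}$ is supplied automatically by the leading letter of the following block. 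Applying $f$ to the identity word $w=0\,1\,2\cdots(2n-1)$ yields the candidate word $f(w)$, and the entire task reduces to proving that $f(w)$ represents $G$.

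Two routine steps come first. To see that $f(w)$ is $5$-uniform, fix a vertex $v$: it occurs once as the leading letter of $f(v)$ and once inside each of $f((v+1)_{2n})$, $f((v+x)_{2n})$, $f((v+n)_{2n})$ and $f((v-x)_{2n})$ (using $2n\equiv 0$ for the $(v+n)$ block), giving exactly five occurrences. Next, by Proposition~\ref{pr1}, uniformity implies that every cyclic shift $f(w_i)$ with $w_i=i\,(i+1)\cdots(2n-1)\,0\cdots(i-1)$ represents the same graph; hence $i$ and $j$ alternate in $f(w)$ if and only if they alternate in $f(w_i)$. For non-adjacency I would note that the factor $f(i)\,f((i+1)_{2n})$ produces two occurrences of $i$ inside a short window with no letter forcing alternation across it, so this double occurrence breaks alternation between $i$ and every vertex outside $\{(i\pm1)_{2n},(i\pm x)_{2n},(i+n)_{2n}\}$. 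It then remains to establish the five positive alternations, one per neighbour type.

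The core of the argument is the positive direction, which I would handle by fixing the shift $w_i$ and tracking the positions of the relevant neighbours inside $f(w_i)$. The hypothesis $\tfrac{n}{2}<x\le\tfrac{2n}{3}$ is what pins down the linear order of the critical indices in $w_i$; in particular it gives $(i+x+1)_{2n}\preceq_{w_i}(i+n)_{2n}$ and $(i+n+1)_{2n}\preceq_{w_i}(i-x)_{2n}$, since violating either ordering would force $n<x+1$, contradicting $x<n$. With this order fixed, the alternation of $i$ with $(i+1)_{2n}$ is read off directly from the interleaving of the two letters across the blocks $f(i),f((i+1)_{2n}),\ldots,f((i-x+1)_{2n})$, and the remaining alternations of $i$ with $(i-1)_{2n}$, $(i\pm x)_{2n}$ and $(i+n)_{2n}$ follow by the same bookkeeping under the ordering supplied by the range hypothesis. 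The hard part, and the main obstacle I anticipate, is the boundary case analysis: when two neighbour blocks coincide (for instance $(i+x+1)_{2n}=(i+n)_{2n}$, which happens precisely when $x+1=n$) or when an index wraps around modulo $2n$, the word $f(w_i)$ splits into several subcases, and one must confirm that the alternation pattern survives in each. Carrying out this split carefully for all five neighbour types, while checking that the double-occurrence windows never accidentally create alternation with a non-neighbour, is the technical bulk of the proof; once it is completed, $f(w)$ is a $5$-uniform word representing $G$, so $G$ is word-representable and $R(G)\le 5$.
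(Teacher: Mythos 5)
Your proposal is correct and follows essentially the same route as the paper: the identical morphism $f(i)=(i)_{2n}(i-1)_{2n}(i-x)_{2n}(i+n)_{2n}(i+x)_{2n}$ applied to $0\,1\cdots(2n-1)$, $5$-uniformity plus cyclic shifts via Proposition~\ref{pr1}, the factor $f(i)\,f((i+1)_{2n})$ to kill alternation with non-neighbours, and the ordering forced by $\tfrac{n}{2}<x\le\tfrac{2n}{3}$ to verify the five positive alternations with the same boundary subcases. (You also correctly read the vertex set as $\{0,\dots,2n-1\}$, fixing a typo in the paper's statement of $V(G)$.)
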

 \begin{theorem}\label{21}
    Let $G=C_{2n}(x,1,n)$ be a 5-regular circulant graph and $x=\frac{n}{2}$. Then $G$ is word-representable and $R(G) \leq 5.$
    \begin{proof}     
Consider the connected $5$-regular circulant graph $G \cong C(x,1,n)$ with 
$x=\frac{n}{2}$. Since $G$ is a 5-regular circulant graph, then $1<x<n$, i.e, $x\geq2$ and $n\geq4$.
The vertex set of $G$ is 
\(
V(G) = \{0,1,2,\ldots,n-1\}.
\) 
By the definition of a circulant graph, each vertex $i \in V(G)$ is adjacent to each of
\(
(i+1)_{2n}, (i-1)_{2n}, (i+x)_{2n},  (i-x)_{2n}\) and \((i+n)_{2n}.
\) Define a morphism $f : V(G)^{\ast} \to V(G)^{\ast}$ by
\[
f(i) = (i)_{2n}(i-1)_{2n}(i+x)_{2n}(i+n)_{2n}(i-x)_{2n},
\forall i \in V(G).
\]
We shall show that the word \(f(w)\) represents \(G\), where
\(
w = 0\,1\,2\,\ldots\,(n-1).
\)
For each \(0 \leq i \leq n-1\), define
\(
w_i = i\,(i+1)\,\ldots\,(n-1)\,0\,1\,\ldots\,(i-1).
\)
By Proposition~2.6, the word \(f(w_i)\) represents \(G\) for all \(0 \leq i \leq n-1\). Consequently, two vertices \(i\) and \(j\) alternate in \(f(w)\) if and only if they alternate in \(f(w_i)\) for every \(i\).
From the definition of the morphism \(f\), each vertex \(i\) occurs exactly once in each of the words
\(
f(i),\; f((i+1)_{2n}),\; f((i+x)_{2n}),\; f((i+n)_{2n}),\; \text{and } f((i-x)_{2n}).
\)
Hence, \(f(w)\) is a 5-uniform word. Moreover,
\[
f(w_i) = (i)\,(i-1)_{2n}\,(i+x)_{2n}\,(i+n)_{2n}\,(i-x)_{2n}\,(i+1)_{2n}\,(i)\,\ldots\,(i-1)_{2n}\,(i-2)_{2n}\,(i-1+x)_{2n}\]\[\hspace{-6.2cm}\,(i-1+n)_{2n}\,(i-1-x)_{2n}.
\]

For any vertex \(i \in V(G)\), the presence of the factor \(f(i)\,f(i+1)\) in \(f(w_i)\) guarantees that \(i\) does not alternate with any vertex
\(
j \in V(G) \setminus \{(i+1)_{2n},\; (i-1)_{2n},\; (i+x)_{2n},\; (i-x)_{2n},\; (i+n)_{2n},\;(i+n+1)_{2n} \}.
\)
Since $i \nsim (i+n+1)_{2n}$, we now show that $i$ does not alternate with $(i+n+1)_{2n}$ in $f(w_i)$. Observe that, for every $0 \le i \le 2n-1$, the following relative ordering appears in $w_i$:
\[
i\prec_{w_i}(i+1)_{2n}\prec_{w_i}(i+2)_{2n}\prec_{w_i}(i+n)_{2n}\prec_{w_i}(i+n+1)_{2n}\prec_{w_i}(i+n+3)_{2n} \]
By the definition of the morphism $f$, we have
\[
f(w_i) 
= \underbrace{f(i)}_{i}\;\;
   \underbrace{f(i+1)}_{(i+n+1)_{2n},i}\;\;
   \underbrace{f(i+2)}_{i}\cdots
   \underbrace{f(i+n)}_{i}\;\;
   \underbrace{f(i+n+1)}_{(i+n+1)_{2n}}\cdots
   \underbrace{f(i+n+3}_{(i+n+1)_{2n}}
\]
Hence, $i$ does not alternate with $(i+n+1)_{2n}$ in $f(w_i)$.
Therefore, it suffices to verify that \(i\) alternates with each vertex
\(
j \in \{(i+1)_{2n},\; (i-1)_{2n},\; (i+x)_{2n},\; (i-x)_{2n},\; (i+n)_{2n}\}
\)
within \(f(w_i)\), for all \(0 \leq i \leq n-1\).

\textbf{Case(1):} Here we will show the alternation between $i$ and $(i+1)_{2n}$ in $f(w_i)$.
Since $x=\frac{n}{2}$ and $x\geq2$ and $n\geq4$, we have
\(
(i+x+1)_{2n} \;\prec_ {w_i}\; (i+n)_{2n}\) and \((i+n+1)_{2n} \;\prec_ {w_i}\; (i-x)_{2n}.
\)
If possible let, $(i+n)_{2n} \;\preceq_{w_i}\; (i+x+1)_{2n}$ and $(i-x)_{2n} \;\preceq_{w_i}\; (i+n+1)_{2n}$, i.e, $i+n\leq i+x+1$ and $2n+i-x\leq i+n+1$. Both of these leads to $n\leq x+1$, i.e, $n\leq 2$, which is a contradiction.
Hence, for every $0 \leq i \leq 2n-1$, the following ordering holds in $w_i$:  
\[
i\prec_{w_i}(i+1)_{2n}\prec_{w_i}(i+2)_{2n}\preceq_{w_i}(i+x)_{2n}\prec_{w_i}(i+x+1)_{2n}\prec_{w_i}(i+n)_{2n}\prec_{w_i}(i+n+1)_{2n}\]\[\hspace{-8cm}\prec_{w_i}(i-x)_{2n}\prec_{w_i}(i-x+1)_{2n} \]
By the definition of the morphism $f$, we have two subcases,
\begin{itemize}
    \item $(i+2)_{2n}=(i+x)_{2n}$
    \[
f(w_i) 
= \underbrace{f(i)}_{i}\;\;
   \underbrace{f(i+1)}_{(i+1)_{2n},i}\;\;
   \underbrace{f(i+2)}_{(i+1)_{2n},i}\;\;
   \underbrace{f(i+x+1)}_{(i+1)_{2n}}\cdots
   \underbrace{f(i+n)}_{i}\;\;
   \underbrace{f(i+n+1)}_{(i+1)_{2n}}\cdots
   \underbrace{f(i-x)}_{i}\;\;\]\[\hspace{-7.3cm}
   \underbrace{f(i-x+1)}_{(i+1)_{2n}}
\]
    \item $(i+2)_{2n}\neq (i+x)_{2n}$    \[
f(w_i) 
= \underbrace{f(i)}_{i}\;\;
   \underbrace{f(i+1)}_{(i+1)_{2n},i}\;\;
   \underbrace{f(i+2)}_{(i+1)_{2n}}\cdots
  \underbrace{f(i+x)}_{i}\;\;
  \underbrace{f(i+x+1)}_{(i+1)_{2n}}\cdots
   \underbrace{f(i+n)}_{i}\;\;
   \underbrace{f(i+n+1)}_{(i+1)_{2n}}\cdots\]\[\hspace{-6.3cm}
   \underbrace{f(i-x)}_{i}\;\;
   \underbrace{f(i-x+1)}_{(i+1)_{2n}}
\]
\end{itemize}
Hence, $i$ and $(i+1)_{2n}$ alternates in $f(w_i)$ for all $0\leq i\leq 2n-1$.

The alternation between $i$ and $(i-1)_{2n}$, $i$ and $(i+x)_{2n}$, $i$ and $(i-x)_{2n}$, as well as $i$ and $(i+n)_{2n}$ can be proved in a similar manner. Hence, in all cases, the required alterations hold. Thus, the word $f(w)$ represents the graph $G$ and consequently $R(G)\leq 5.$ For a detailed proof, we refer the reader to the appendix.

\end{proof}
\end{theorem}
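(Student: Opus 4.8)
The plan is to adapt the morphism technique of Theorem~\ref{20}, but with a block ordering tailored to the symmetric situation $x=\tfrac{n}{2}$ (which forces $n$ even and $n\ge 4$). I would define $f:V(G)^{\ast}\to V(G)^{\ast}$ by $f(i)=(i)_{2n}(i-1)_{2n}(i+x)_{2n}(i+n)_{2n}(i-x)_{2n}$ and take $w=0\,1\,\cdots(2n-1)$, the aim being to show that the $5$-uniform word $f(w)$ represents $G$; this at once yields $R(G)\le 5$. First I would verify $5$-uniformity: a fixed vertex $j$ occurs in exactly the blocks $f(j),f(j+1),f(j+x),f(j-x),f(j+n)$, once in each, and these five are genuinely distinct because, for $n\ge 4$ with $x=\tfrac n2$, the values $j,\,j+1,\,j+\tfrac n2,\,j+\tfrac{3n}{2},\,j+n$ are pairwise incongruent modulo $2n$.

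Next, by Proposition~\ref{pr1} it suffices to analyse each rotation $f(w_i)$, with $w_i=i\,(i+1)\cdots(i-1)$, since $i$ and $j$ alternate in $f(w)$ iff they do so in $f(w_i)$. The non-adjacency direction is handled by the factor $f(i)\,f(i+1)$: reading it left to right, $i$ appears at the start of $f(i)$ and again as the second letter of $f(i+1)$, and the letters strictly between these two occurrences are precisely $(i-1)_{2n},(i+x)_{2n},(i+n)_{2n},(i-x)_{2n},(i+1)_{2n}$, i.e.\ the five neighbours of $i$. Hence any non-neighbour $j$ is missing from this gap, the two copies of $i$ become consecutive in the projection onto $\{i,j\}$, and $i$ and $j$ do not alternate. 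I would single out $(i+n+1)_{2n}$ for explicit attention: although it is a non-neighbour, the degeneracy $x=\tfrac n2$ makes it surface \emph{inside} $f(i+1)$ itself (as $(i+1)+n$), so I would check that it still lies after the second copy of $i$ and therefore cannot break the consecutivity argument.

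The positive direction---that $i$ alternates with each of its five neighbours $(i\pm 1)_{2n},(i\pm x)_{2n},(i+n)_{2n}$---is where the real work lies. For each neighbour type I would first pin down, using $x=\tfrac n2$ and $n\ge 4$, the relative order in $w_i$ of the blocks carrying the two tracked letters (for the pair $\{i,i+1\}$ this is the chain $i\prec(i+1)\prec(i+2)\preceq(i+x)\prec(i+x+1)\prec(i+n)\prec(i+n+1)\prec(i-x)\prec(i-x+1)$, indices mod $2n$), and then read off the block-by-block interleaving to confirm the pattern $iji j\cdots$. The main obstacle is exactly this degeneracy: because $x=\tfrac n2$ the points $i,i+x,i+n,i-x$ are equally spaced, so several boundary blocks nearly coincide, and one must branch into sub-cases according to whether equalities such as $(i+2)_{2n}=(i+x)_{2n}$ (which forces $n=4$) hold, verifying alternation in each branch. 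Once the five neighbour cases, each with its few sub-cases, are settled, $f(w)$ represents $G$, and its $5$-uniformity gives $R(G)\le 5$.
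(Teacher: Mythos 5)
Your proposal follows the paper's proof essentially verbatim: the same morphism $f(i)=(i)_{2n}(i-1)_{2n}(i+x)_{2n}(i+n)_{2n}(i-x)_{2n}$ applied to $w=0\,1\cdots(2n-1)$, the same rotation-plus-factor argument for non-neighbours (including the separate check that the non-neighbour $(i+n+1)_{2n}$, which surfaces inside $f(i+1)$, lies after the second copy of $i$), and the same neighbour-by-neighbour block-ordering analysis with the degenerate sub-case $(i+2)_{2n}=(i+x)_{2n}$. The plan is sound; completing it amounts to writing out the remaining four neighbour cases exactly as the paper does in its appendix.
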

\begin{corollary}\label{com}
    
    Let $G=C_{2n}(x,1,n)$ be a 5-regular circulant graph. Then $G$ is word-representable for $\frac{n}{2} \leq x \leq \frac{2n}{3}$ and $R(G) \leq 5.$
    \end{corollary}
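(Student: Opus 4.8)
The plan is to recognize that Corollary~\ref{com} is nothing more than the amalgamation of the two immediately preceding theorems, so the entire argument reduces to observing that their hypotheses jointly cover the stated range of $x$. First I would write the closed interval as a disjoint union
\[
\Big[\tfrac{n}{2},\, \tfrac{2n}{3}\Big] \;=\; \{\,x = \tfrac{n}{2}\,\} \;\cup\; \Big(\tfrac{n}{2},\, \tfrac{2n}{3}\Big],
\]
and then dispatch each piece to the theorem that handles it. The single point $x = \tfrac{n}{2}$ (which is an admissible jump element only when $n$ is even, so that $x$ is a positive integer with $1 < x < n$) is precisely the hypothesis of Theorem~\ref{21}, which yields that $G$ is word-representable with $R(G) \le 5$. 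The remaining half-open interval $\tfrac{n}{2} < x \le \tfrac{2n}{3}$ is exactly the hypothesis of Theorem~\ref{20}, giving the same conclusion $R(G) \le 5$.

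Next I would simply combine the two conclusions: in either sub-case the graph is word-representable and admits a $5$-uniform representing word, so the representation number is at most $5$ throughout the full range. Since the two theorems already construct explicit morphisms $f$ together with words $f(w)$ witnessing the $5$-representability, no new word or orientation needs to be built here; the corollary only packages the two endpoints and the interior into one statement.

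The honest assessment is that there is no genuine obstacle in this corollary — all of the real work (the morphism construction, the ordering arguments in $w_i$, and the verification of the required alternations) has been carried out in Theorems~\ref{20} and~\ref{21}. The only point that warrants a sentence of care is checking that the boundary value $x = \tfrac{n}{2}$ is not double-counted or omitted: Theorem~\ref{20} uses the strict inequality $\tfrac{n}{2} < x$, while Theorem~\ref{21} supplies exactly the equality case, so the union of their domains is the closed interval $[\tfrac{n}{2}, \tfrac{2n}{3}]$ with no gap and no overlap. Hence the corollary follows at once.
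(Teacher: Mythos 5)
Your proof is correct and matches the paper's intent exactly: Corollary~\ref{com} is simply the union of Theorem~\ref{21} (the boundary case $x=\tfrac{n}{2}$) and Theorem~\ref{20} (the range $\tfrac{n}{2}<x\le\tfrac{2n}{3}$), which together cover the closed interval with no gap. Nothing further is needed.
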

 \begin{corollary}\label{cnxo}
 Let $G=C_{2n}(x,1,n)$ be a 5-regular circulant graph. If $n \equiv x \equiv 0 \pmod{3}$ and $\tfrac{n}{2}\leq x \leq n$, then $G$ is word-representable.      
\end{corollary}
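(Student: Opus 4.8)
The plan is to derive this corollary by combining two results already established for the graph $C_{2n}(x,1,n)$, namely Corollary~\ref{com} and Theorem~\ref{nx0}, which between them cover complementary ranges of the jump element $x$. Under the standing hypothesis $n \equiv x \equiv 0 \pmod{3}$ and $\tfrac{n}{2} \le x \le n$ (with $x < n$, since $x$ must be a proper jump element distinct from $n$ for $G$ to be $5$-regular), I would split the argument into two cases according to the position of $x$ relative to $\tfrac{2n}{3}$.

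First, if $\tfrac{n}{2} \le x \le \tfrac{2n}{3}$, then Corollary~\ref{com} applies directly and yields that $G$ is word-representable, indeed with $R(G) \le 5$. Note that this subrange requires no congruence condition on $x$ or $n$, so the conclusion holds automatically for the relevant values of $x$ here. Second, if $\tfrac{2n}{3} < x < n$, then the hypotheses $n \equiv x \equiv 0 \pmod{3}$ together with $x > \tfrac{2n}{3}$ are precisely those of Theorem~\ref{nx0}, so $G$ is again word-representable. Since every $x$ with $\tfrac{n}{2} \le x < n$ lies in exactly one of these two subranges, the two cases together establish word-representability throughout the stated interval.

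I do not expect a genuine obstacle here, as the corollary is essentially a repackaging of two earlier theorems. The only point requiring a small amount of care is the seam at the boundary value $x = \tfrac{2n}{3}$: since $n \equiv 0 \pmod{3}$, this value is an integer, and as it is \emph{included} in the closed range of Corollary~\ref{com} while Theorem~\ref{nx0} covers the strictly larger values, there is neither a gap nor a conflict between the two cases. I would also remark that the congruence hypothesis $n \equiv x \equiv 0 \pmod{3}$ is genuinely invoked only in the second case, where Theorem~\ref{nx0} relies on it; the first case holds for this congruence class as a special instance of the more general Corollary~\ref{com}.
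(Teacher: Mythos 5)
Your proposal is correct and is exactly the paper's argument: the paper proves this corollary by citing Corollary~\ref{com} (covering $\tfrac{n}{2}\le x\le\tfrac{2n}{3}$) together with Theorem~\ref{nx0} (covering $x>\tfrac{2n}{3}$ under the congruence hypothesis). You have simply made explicit the case split and the check at the seam $x=\tfrac{2n}{3}$ that the paper leaves implicit.
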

\begin{proof}
    The result follows from Corollary \ref{com} and Theorem \ref{nx0}.
\end{proof}
 \begin{corollary}\label{cnox1}
 Let $G=C_{2n}(x,1,n)$ be a 5-regular circulant graph. If $n \equiv 0\pmod{3},  x \equiv 1 \pmod{3}$ and $1<x\leq\tfrac{2n}{3}$, then $G$ is word-representable.   
\end{corollary}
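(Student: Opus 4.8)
The plan is to observe that this corollary is a direct stitching together of two results already established in the excerpt, with the splitting point at $x=\tfrac{n}{2}$. The hypotheses $n\equiv 0\pmod 3$, $x\equiv 1\pmod 3$, and $1<x\leq\tfrac{2n}{3}$ describe a range of admissible jump values $x$; I would partition this range into the two subintervals $1<x<\tfrac{n}{2}$ and $\tfrac{n}{2}\leq x\leq\tfrac{2n}{3}$ and handle each by invoking an existing theorem.

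First I would treat the case $1<x<\tfrac{n}{2}$. Here the congruence conditions $n\equiv 0\pmod 3$ and $x\equiv 1\pmod 3$ together with $x<\tfrac{n}{2}$ are exactly the hypotheses of Theorem~\ref{n0x1}, so word-representability follows immediately. Next I would treat $\tfrac{n}{2}\leq x\leq\tfrac{2n}{3}$; in this regime Corollary~\ref{com} applies directly and yields word-representability (indeed with $R(G)\leq 5$), and since Corollary~\ref{com} imposes no congruence restriction, the parity hypotheses are not even needed there. Since every $x$ with $1<x\leq\tfrac{2n}{3}$ lies in one of these two subintervals, the two cases exhaust all possibilities and the conclusion holds throughout the stated range.

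The only point requiring a moment of care, rather than a genuine obstacle, is the behaviour at the common endpoint $x=\tfrac{n}{2}$. I would note that Corollary~\ref{com} already covers the closed endpoint $x=\tfrac{n}{2}$, so the two subintervals overlap rather than leave a gap, guaranteeing complete coverage. (One may further remark that under the given congruences the value $x=\tfrac{n}{2}$ cannot actually occur: if $n$ were even with $n\equiv 0\pmod 3$ then $n\equiv 0\pmod 6$ and hence $\tfrac{n}{2}\equiv 0\pmod 3$, contradicting $x\equiv 1\pmod 3$; but this observation is not needed for the argument.) Thus no case is lost at the boundary, and the result follows by combining Theorem~\ref{n0x1} and Corollary~\ref{com}.
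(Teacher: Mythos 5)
Your proposal is correct and matches the paper's own proof, which simply cites Theorem~\ref{n0x1} for $1<x<\tfrac{n}{2}$ and Corollary~\ref{com} for $\tfrac{n}{2}\leq x\leq\tfrac{2n}{3}$. Your extra observation that $x=\tfrac{n}{2}$ cannot occur under the given congruences is a pleasant bonus but, as you note, not needed.
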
 
\begin{proof}
    The result follows from Corollary \ref{com} and Theorem \ref{n0x1}.
\end{proof}

\begin{corollary}\label{cn2x1}
Let $G=C_{2n}(x,1,n)$ be a 5-regular circulant graph. If $n \equiv 2\pmod{3},  x \equiv 1 \pmod{3}$ and $1<x\leq \tfrac{2n}{3}$, then $G$ is word-representable.
\end{corollary}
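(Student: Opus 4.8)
The plan is to reduce the corollary to two results already established in the paper by splitting the allowed range of $x$ at the point $\tfrac{n}{2}$. First I would observe that, since $G = C_{2n}(x,1,n)$ is a genuine $5$-regular circulant graph, the three jump elements $1, x, n$ must be distinct with $1 < x < n$; in particular $x \geq 2$, so the hypothesis $1 < x$ holds automatically and the effective range to cover is $\bigl(1, \tfrac{2n}{3}\bigr]$.

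Next, I would partition this range as $\bigl(1, \tfrac{n}{2}\bigr) \cup \bigl[\tfrac{n}{2}, \tfrac{2n}{3}\bigr]$ and treat each subinterval separately. On the left subinterval $1 < x < \tfrac{n}{2}$, the three conditions $n \equiv 2 \pmod{3}$, $x \equiv 1 \pmod{3}$, and $x < \tfrac{n}{2}$ are precisely the hypotheses of Theorem~\ref{n2x1}; invoking it immediately yields that $G$ is word-representable. On the right subinterval $\tfrac{n}{2} \leq x \leq \tfrac{2n}{3}$, word-representability follows directly from Corollary~\ref{com}, which in fact delivers the stronger conclusion $R(G) \leq 5$ and imposes no congruence conditions on $n$ or $x$ whatsoever. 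This mirrors exactly the structure of the proofs of Corollaries~\ref{cnxo} and~\ref{cnox1}, where a congruence-specific theorem is glued to Corollary~\ref{com} along the boundary $x = \tfrac{n}{2}$.

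Since the two subintervals together exhaust $\bigl(1, \tfrac{2n}{3}\bigr]$, every $x$ satisfying the hypotheses of the corollary falls into at least one of the two cases, and in each case $G$ is word-representable. The only point requiring a moment's care is confirming that the split is genuinely exhaustive --- that the half-open interval $\bigl(1,\tfrac{n}{2}\bigr)$ together with the closed interval $\bigl[\tfrac{n}{2},\tfrac{2n}{3}\bigr]$ leaves no gap at the boundary $x = \tfrac{n}{2}$; this is immediate because the boundary value is contained in the second interval. I do not anticipate any substantive obstacle here: the genuine work has already been carried out in Theorem~\ref{n2x1} (the $K_6$-embedding and shortcut analysis) and in Corollary~\ref{com} (the morphism construction underlying Theorems~\ref{20} and~\ref{21}), so the present corollary is purely a bookkeeping combination of the two.
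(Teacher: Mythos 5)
Your proposal is correct and matches the paper's proof, which simply cites Corollary~\ref{com} and Theorem~\ref{n2x1}; your case split at $x = \tfrac{n}{2}$ is exactly the implicit decomposition behind that one-line argument. No further comment is needed.
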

\begin{proof}
    The result follows from Corollary \ref{com} and Theorem \ref{n2x1}.
\end{proof}
\begin{theorem}\label{12n}
    Let $G=C_{2n}(2,1,n)$ be a 5-regular circulant graph and $n>3$. Then $G$ is word-representable and $R(G) \leq 3.$ Furthermore, for $n=3$, $C_{6}(2,1,3)$ is word-representable and $R(G)=1.$
    \begin{proof}
    First, we consider the graph $G=C_{6}(2,1,3)$. Note that, $G\cong K_6 $. Hence, $G$ is word-representable and $R(G)=1.$ 
next, we consider the connected $5$-regular circulant graph $G = C(2,1,n)$, where $n>3$.
The vertex set of $G$ is 
\(
V(G) = \{0,1,2,\ldots,n-1\}.
\) 
By the definition of a circulant graph, each vertex $i \in V(G)$ is adjacent to each of
\(
(i+1)_{2n}, (i-1)_{2n}, (i+x)_{2n},  (i-x)_{2n}\) and \((i+n)_{2n}.
\) Define a morphism $f : V(G)^{\ast} \to V(G)^{\ast}$ by
\[
f(i) = (i)_{2n}(i-2)_{2n}(i+n)_{2n},
\forall i \in V(G).
\]
We shall show that the word \(f(w)\) represents \(G\), where
\(
w = 0\,1\,2\,\ldots\,(n-1).
\)
For each \(0 \leq i \leq n-1\), define
\(
w_i = i\,(i+1)\,\ldots\,(n-1)\,0\,1\,\ldots\,(i-1).
\)
By Proposition~2.6, the word \(f(w_i)\) represents \(G\) for all \(0 \leq i \leq n-1\). Consequently, two vertices \(i\) and \(j\) alternate in \(f(w)\) if and only if they alternate in \(f(w_i)\) for every \(i\).
From the definition of the morphism \(f\), each vertex \(i\) occurs exactly once in each of the words
$
f(i),\; f((i+2)_{2n})$ \text{and } $f((i+n)_{2n}).$
Hence, \(f(w)\) is a 3-uniform word. Moreover,
\[
f(w_i) = (i)\,(i-2)_{2n}\,(i+n)_{2n}\,(i+1)_{2n}\,(i-1)_{2n}\,(i+n+1)_{2n}\ldots\,(i-1)_{2n}\,(i-3)_{2n}\,(i+n-1)_{2n}.
\]

For any vertex \(i \in V(G)\), the presence of the factor \(f(i)\,f(i+1)\) in \(f(w_i)\) guarantees that \(i\) does not alternate with any vertex
\(
j \in V(G) \setminus \{(i+1)_{2n},\; (i-1)_{2n},\; (i+x)_{2n},\; (i-x)_{2n},\; (i+n)_{2n}\}.
\)

Therefore, it suffices to verify that \(i\) alternates with each vertex
\(
j \in \{(i+1)_{2n},\; (i-1)_{2n},\; (i+2)_{2n},\; (i-2)_{2n},\; (i+n)_{2n}\}
\)
within \(f(w_i)\), for all \(0 \leq i \leq n-1\).

\textbf{Case(1):} Here we will show the alternation between $i$ and $(i+1)_{2n}$ in $f(w_i)$.
Since $n\geq 3$, for every $0 \leq i \leq 2n-1$, the following ordering holds in $w_i$:  
\[
i \prec_{w_i}(i+1)_{2n}\prec_{w_i}(i+2)_{2n}\prec_{w_i}(i+3)_{2n}\prec_{w_i}(i+n)_{2n}\preceq_{w_i}(i+n+1)_{2n}.\]

By the definition of the morphism $f$, we have ,

    \[
f(w_i) 
= \underbrace{f(i)}_{i}\;\;
   \underbrace{f(i+1)}_{(i+1)_{2n}}\;\;
   \underbrace{f(i+2)}_{i}\;\;
   \underbrace{f(i+3)}_{(i+1)_{2n}}\cdots
   \underbrace{f(i+n)}_{i}
   \underbrace{f(i+n+1)}_{(i+1)_{2n}}.
\]
Hence, $i$ and $(i+1)_{2n}$ alternates in $f(w_i)$ for all $0\leq i\leq 2n-1$.

The alternation between $i$ and $(i-1)_{2n}$, $i$ and $(i+2)_{2n}$, $i$ and $(i-2)_{2n}$, as well as $i$ and $(i+n)_{2n}$ can be proved in a similar manner. Hence, in all cases, the required alterations hold. Thus, the word $f(w)$ represents the graph $G$ and consequently $R(G)\leq 3.$ For a detailed proof, we refer the reader to the appendix.

\end{proof}
\end{theorem}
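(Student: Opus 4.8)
The plan is to treat the boundary case $n=3$ separately and then attack $n>3$ with the same morphism machinery used in Theorems~\ref{20} and~\ref{21}, but with a shorter, three-letter image that exploits the small jumps $1$ and $2$. For $n=3$ the graph $C_{6}(2,1,3)$ is $5$-regular on $6$ vertices, hence every pair of distinct vertices is adjacent and $G\cong K_{6}$; since any permutation of the vertices is a $1$-uniform word representing a complete graph, $R(G)=1$ follows immediately.

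For $n>3$ I would fix the vertex set $\{0,1,\dots,2n-1\}$, record that each vertex $i$ has exactly the five neighbours $(i\pm1)_{2n}$, $(i\pm2)_{2n}$, $(i+n)_{2n}$, and define the morphism $f(i)=(i)_{2n}(i-2)_{2n}(i+n)_{2n}$. Applying $f$ to $w=0\,1\,\cdots\,(2n-1)$, the first step is to verify $3$-uniformity: the letter $i$ is produced exactly once as the first letter of $f(i)$, once as the middle letter of $f((i+2)_{2n})$, and once as the last letter of $f((i+n)_{2n})$, so it occurs precisely three times. This pins down that the three blocks containing $i$ are $f(i)$, $f((i+2)_{2n})$ and $f((i+n)_{2n})$, and $R(G)\le 3$ will follow once $f(w)$ is shown to represent $G$.

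The core of the argument is to check, using Proposition~\ref{pr1} to rotate to the cyclic word $w_i=i\,(i+1)\cdots(i-1)$ so that $f(w_i)$ begins with $f(i)$, that $i$ alternates with each of its five neighbours and fails to alternate with every non-neighbour. For the positive side I would, for each neighbour $j\in\{(i\pm1)_{2n},(i\pm2)_{2n},(i+n)_{2n}\}$, locate the three blocks in which $j$ appears and the position of $j$ inside each block, then read off the projection of $f(w_i)$ onto $\{i,j\}$ and confirm it has the form $ijij\cdots$. The delicate point is that the within-block orders are exactly what makes each projection alternate; for instance $(i+2)_{2n}$ precedes $i$ inside $f((i+2)_{2n})$, while $i$ precedes $(i+n)_{2n}$ inside $f(i)$, so these orderings must be tracked carefully rather than glossed over. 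For the negative side, the factor $f(i)f(i+1)$ confines the letters appearing strictly between the first two occurrences of $i$ to a short window, so any non-neighbour lying outside this window immediately yields an $ii$ factor in the projection and hence fails to alternate.

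I expect the main obstacle to be the non-neighbour that \emph{does} fall inside that first window, namely $(i+n+1)_{2n}$, which appears in $f(i+1)$; for it the local factor $f(i)f(i+1)$ is not enough. Here I would invoke the full three-occurrence structure: between the second occurrence of $i$ (in $f((i+2)_{2n})$) and the third (in $f((i+n)_{2n})$) the vertex $(i+n+1)_{2n}$ does not reappear, so the projection again contains an $ii$ factor and alternation is destroyed. Once the positive and negative checks are complete for all $i$, the $3$-uniform word $f(w)$ represents $G$, giving word-representability and $R(G)\le 3$; the neighbour cases symmetric to $(i+1)_{2n}$ I would relegate to the appendix, as in the preceding theorems.
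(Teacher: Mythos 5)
Your proposal is correct and follows essentially the same route as the paper: the same morphism $f(i)=(i)_{2n}(i-2)_{2n}(i+n)_{2n}$, the same $3$-uniformity count, the same rotation to $f(w_i)$ via Proposition~\ref{pr1}, and the same neighbour-by-neighbour alternation check. In fact you go slightly further than the paper's written proof by explicitly noticing that the non-neighbour $(i+n+1)_{2n}$ lies inside the window between the first two occurrences of $i$ (it is the last letter of $f(i+1)$), so the factor $f(i)f(i+1)$ alone does not rule it out; your three-occurrence argument correctly disposes of it, whereas the paper handles the analogous point explicitly only in Theorem~\ref{21} and passes over it here.
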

\begin{corollary}\label{cnox2}
  Let $G=C_{2n}(x,1,n)$ be a 5-regular circulant graph. If $n \equiv 0\pmod{3},  x \equiv 2 \pmod{3}$ and $1<x\leq\tfrac{2n}{3}$, then $G$ is word-representable.    
\end{corollary}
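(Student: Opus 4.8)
The plan is to cover the admissible range of the jump $x$ by a three-way case split and to assemble the conclusion entirely from results already proved earlier in this section. Since the hypotheses force $x \equiv 2 \pmod 3$ together with $x > 1$, the smallest admissible value is $x = 2$, so the permitted range $1 < x \le \tfrac{2n}{3}$ coincides with $2 \le x \le \tfrac{2n}{3}$. I would partition this into the three subintervals $x = 2$, then $2 < x < \tfrac{n}{2}$, and finally $\tfrac{n}{2} \le x \le \tfrac{2n}{3}$, matching each to an existing word-representability statement.

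The two upper pieces are routine. For $2 < x < \tfrac{n}{2}$, the hypotheses $n \equiv 0$, $x \equiv 2 \pmod 3$, and $2 < x < \tfrac{n}{2}$ exactly match Theorem~\ref{n0x2}, which gives word-representability directly. For $\tfrac{n}{2} \le x \le \tfrac{2n}{3}$, Corollary~\ref{com} applies and in fact yields the stronger bound $R(G) \le 5$. Since Theorem~\ref{n0x2} handles everything strictly below $\tfrac{n}{2}$ and Corollary~\ref{com} handles the closed interval from $\tfrac{n}{2}$ upward, there is no gap at the junction $x = \tfrac{n}{2}$, and together they dispatch every value $x > 2$ in range.

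The step requiring care---and the reason this corollary needs three ingredients rather than the two used by its siblings, Corollaries~\ref{cnox1} and~\ref{cn2x1}---is the boundary value $x = 2$. Theorem~\ref{n0x2} explicitly excludes it, its hypothesis reading $2 < x$ rather than $2 \le x$, because the partition-based coloring argument there degenerates when $x = 2$. In the sibling corollaries this difficulty never surfaces, since $x \equiv 1 \pmod 3$ with $x > 1$ forces $x \ge 4$; here, by contrast, $x = 2$ is a genuine case. It is supplied precisely by Theorem~\ref{12n}, which treats $G \cong C_{2n}(2,1,n)$ directly, with $R(G) \le 3$ for $n > 3$ and $R(G) = 1$ for $n = 3$. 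Combining these three results over the three subintervals then establishes word-representability of $G$ throughout the claimed range $1 < x \le \tfrac{2n}{3}$.
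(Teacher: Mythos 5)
Your proposal is correct and follows exactly the same route as the paper, which simply cites Corollary~\ref{com}, Theorem~\ref{12n}, and Theorem~\ref{n0x2} without elaboration; your three-way split ($x=2$, $2<x<\tfrac{n}{2}$, $\tfrac{n}{2}\le x\le\tfrac{2n}{3}$) is precisely the intended decomposition, and your observation about why $x=2$ needs separate treatment here but not in the sibling corollaries is accurate.
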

\begin{proof}
    The result follows from Corollary \ref{com}, Theorem \ref{12n} and Theorem \ref{n0x2}.
\end{proof} 
\begin{corollary}\label{cn1x2}
Let $G=C_{2n}(x,1,n)$ be a 5-regular circulant graph. If $n \equiv 1\pmod{3},  x \equiv 2 \pmod{3}$ and $1<x\leq \tfrac{2n}{3}$, then $G$ is word-representable.
\end{corollary}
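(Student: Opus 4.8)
The plan is to prove this corollary exactly as its siblings (Corollaries~\ref{cnxo}, \ref{cnox1}, \ref{cn2x1}, \ref{cnox2}) are proved: by a case analysis on the value of $x$, partitioning the admissible range $1 < x \leq \tfrac{2n}{3}$ into sub-intervals, each of which is already settled by an earlier result. Since $x \equiv 2 \pmod 3$ and $x > 1$, the smallest admissible value is $x = 2$, and the remaining integer values of $x$ fall into the ranges $2 < x < \tfrac{n}{2}$ and $\tfrac{n}{2} \leq x \leq \tfrac{2n}{3}$.

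First I would dispose of the boundary case $x = 2$: here $G = C_{2n}(2,1,n)$, and Theorem~\ref{12n} directly guarantees that $G$ is word-representable (indeed with $R(G) \leq 3$ for $n > 3$, and $R(G)=1$ for $n=3$). Next, for $2 < x < \tfrac{n}{2}$, the hypotheses $n \equiv 1 \pmod 3$ and $x \equiv 2 \pmod 3$ are precisely the conditions of Theorem~\ref{n1x2}, so word-representability is immediate from that theorem. Finally, for $\tfrac{n}{2} \leq x \leq \tfrac{2n}{3}$ the congruence conditions become irrelevant and Corollary~\ref{com} applies directly, giving word-representability (with $R(G) \leq 5$). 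This is the same three-way split used in Corollary~\ref{cnox2}, with Theorem~\ref{n1x2} now playing the role that Theorem~\ref{n0x2} played there.

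Since the substantive content has already been established in the three cited results, I expect no genuine difficulty; the only point requiring care is the bookkeeping needed to confirm that the three pieces tile the whole interval $1 < x \leq \tfrac{2n}{3}$ without leaving a gap. In particular one must check that the boundary $x = \tfrac{n}{2}$ is absorbed by Corollary~\ref{com} (whose range is closed at $\tfrac{n}{2}$) rather than by Theorem~\ref{n1x2} (whose range $2 < x < \tfrac{n}{2}$ is open there), and that when $n$ is odd the non-integrality of $\tfrac{n}{2}$ causes no omission. Because $\{2\} \cup \left(2, \tfrac{n}{2}\right) \cup \left[\tfrac{n}{2}, \tfrac{2n}{3}\right] = \left[2, \tfrac{2n}{3}\right]$ and every admissible integer $x$ with $1 < x \leq \tfrac{2n}{3}$ lies in $\left[2, \tfrac{2n}{3}\right]$, the three results jointly cover all cases and the claim follows.
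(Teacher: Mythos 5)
Your proposal is correct and matches the paper's own proof, which simply cites the same three results (Theorem~\ref{12n} for $x=2$, Theorem~\ref{n1x2} for $2<x<\tfrac{n}{2}$, and Corollary~\ref{com} for $\tfrac{n}{2}\leq x\leq\tfrac{2n}{3}$). Your extra care about the boundary at $x=\tfrac{n}{2}$ and the tiling of the interval is a welcome bit of bookkeeping that the paper leaves implicit.
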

\begin{proof}
    The result follows from Corollary \ref{com}, Theorem \ref{12n} and Theorem \ref{n1x2}.
\end{proof}

\subsection{Word-representability of 5-Regular Circulant Graph Through Factorization}\label{factor}
In this section, we factorize \(5\)-regular circulant graphs into smaller regular circulant graphs. 
Using this factorization, we study their word-representability (Theorem \ref{6} and Theorem \ref{7}) and the representation number (Remark\ref{rem}). 
The factorization step primarily relies on Theorem~\ref{fac}.

\begin{theorem}\label{6}

Let $G=C_{2n}(a,b,n)$ be a 5-regular connected circulant graph, where $n,b$ are odd and $a$ is even. Then $C_{2n}(a,b,n)$ can be factorized as a Cartesian product of a 3-regular and a 2-regular circulant graph using Theorem \ref{fac} iff there exists positive integers $p(>2),q(1<q<n)$ such that $p\mid a$, $ q\mid b $, $pq=2n$ and $ gcd(p,q)=1$. 
\begin{proof}
$(\Rightarrow)$ Let $G=C_{2n}(a,b,n)$ be a 5-regular connected circulant graph, where $n,b$ are odd and $a$ is even. We assume that it can be factorized as a Cartesian product of a 3-regular and a 2-regular circulant graph using Theorem \ref{fac}. Then by Theorem \ref{fac} there exists $p(>1), q(>1)$ with $gcd(p,q)=1$ and $pq=2n$ such that $C_{pq}(T)= C_{p}(R) \square C_{q}(S) $. Here, $R \subseteq [1,\frac{p}{2}]$, $S \subseteq [1,\frac{q}{2}]$ and $T= qR \cup pS=\{a,b,n\}$.

If possible, let \(p = 2\) and \(q = n\). Then \(R = \{1\}\), i.e., \(qR = \{n\}\). 
Since \(p = 2\), all the elements of \(pS\) are even, which contradicts the fact that \(b \in pS\). 
Hence, we must have \(p > 2\) and \(q < n\).  

Now, since \(n\) is odd, it follows that \(p = 2n_{1}\) and \(q = n_{2}\), 
where both \(n_{1}\) and \(n_{2}\) are odd integers strictly greater than 1.  

Next, we claim that \(R = \{\tfrac{p}{2}, \tfrac{b}{q}\}\) and \(S = \{\tfrac{a}{p}\}\). 
As \(p\) is even, neither \(b\) nor \(n\) can belong to \(pS\). 
Thus, \(pS = \{a\}\) and \(qR = \{b, n\}\) and our claim is true.  

Since every element of \(R\) and \(S\) is an integer, it follows that \(p\) divides \(a\) 
and \(q\) divides \(b\).

$(\Leftarrow)$ Let us assume there exists positive integers $p(>2), q(1<q<n)$ such that $p\mid a$, $ q\mid b $, $pq=2n$ and $ gcd(p,q)=1$. We take $R=\{\frac{p}{2},\frac{b}{q}\}$ and $S=\{\frac{a}{p}\}$. Then, $T=qR \cup pS=\{a,b,n\}$. Also, we claim that, $R \subseteq [1,\frac{p}{2}]$, $S \subseteq [1,\frac{q}{2}]$. If possible, suppose \(\tfrac{b}{q} \geq \tfrac{p}{2}\) and \(\tfrac{a}{p} \geq \tfrac{q}{2}\), 
which is equivalent to \(b \geq n\) and \(a \geq n\). 
This contradicts the fact that \(a < n\) and \(b < n\). 
Hence, our claim holds. By Theorem \ref{fac}, \(G\) can be factorized 
as the Cartesian product of \(C_{p}(R)\) and \(C_{q}(S)\).

\end{proof}

\end{theorem}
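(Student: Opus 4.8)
The plan is to prove the two implications separately, letting the degree constraints imposed by the words ``$3$-regular'' and ``$2$-regular'' do most of the structural work. The guiding observation is that, by the degree formula for circulant graphs, a $3$-regular circulant graph $C_p(R)$ must satisfy $|R|=2$ with its largest jump equal to $p/2$ (since $2\cdot 2-1=3$), which in particular forces $p$ to be even; likewise a $2$-regular circulant graph $C_q(S)$ must satisfy $|S|=1$ with its single jump distinct from $q/2$. Combined with Theorem~\ref{fac} (applied with $d=1$, so that $\{a,b,n\}=T=qR\cup pS$), these facts will determine $R$ and $S$ essentially uniquely.

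For the forward direction, I would assume such a factorization exists and first eliminate the degenerate split $p=2,\ q=n$: there $R=\{1\}$ gives $qR=\{n\}$ while every element of $pS$ is even, contradicting that $b\in T$ is odd. Hence $p>2$ and $q=2n/p<n$, and since $n$ is odd with $p$ even, $q$ is odd. The decisive step is a parity count: all elements of $pS$ are even, so the two odd members $b,n$ of $T$ are forced into $qR$, giving $qR=\{b,n\}$ and $pS=\{a\}$. The $p/2$-jump of $R$ accounts for $q\cdot(p/2)=n$, and integrality of the remaining jumps $b/q$ and $a/p$ yields $q\mid b$ and $p\mid a$, which is precisely the desired conclusion.

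For the reverse direction, given $p,q$ as in the hypothesis I would exhibit the factors explicitly, setting $R=\{p/2,\,b/q\}$ and $S=\{a/p\}$; both are well-defined integer sets by the divisibility assumptions, and a direct computation gives $qR\cup pS=\{n,b,a\}=T$. The only nonroutine verification is the range requirement of Theorem~\ref{fac}: one checks $b/q\le p/2\iff b\le n$ and $a/p\le q/2\iff a\le n$, both valid since $a,b<n$ (and, as $q$ is odd, $a/p\ne q/2$, so $C_q(S)$ is genuinely $2$-regular, while $b/q<p/2$ makes $p/2$ the strict maximum of $R$, so $C_p(R)$ is $3$-regular). Together with $\gcd(p,q)=1$ and $pq=2n$, Theorem~\ref{fac} then yields $C_{2n}(T)\cong C_p(R)\,\square\,C_q(S)$.

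The step I expect to be the main obstacle is the parity/assignment argument in the forward direction: showing that $p$ must be even and strictly larger than $2$, and that the odd jumps $b,n$ are compelled to lie in $qR$ rather than $pS$. Once this assignment is fixed, the passage from integrality to divisibility and the range bounds are routine. A minor point worth noting is that Theorem~\ref{fac} allows a scaling parameter $d$ with $\gcd(d,pq)=1$; any such $d$ is odd and satisfies $dn\equiv n\pmod{2n}$, so the jump $n$ is preserved and the same divisibility conclusions persist, justifying the reduction to $d=1$.
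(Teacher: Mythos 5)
Your proposal is correct and follows essentially the same route as the paper's own proof: the same elimination of the degenerate split $p=2$, $q=n$, the same parity argument forcing $qR=\{b,n\}$ and $pS=\{a\}$, and the same explicit choice $R=\{p/2,\,b/q\}$, $S=\{a/p\}$ in the converse. You make explicit a few points the paper leaves implicit (the degree formula forcing $p/2\in R$ and hence $p$ even, the check that the factors really are $3$- and $2$-regular, and the reduction to $d=1$ in Theorem~\ref{fac}), but these are refinements of the identical argument rather than a different approach.
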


\begin{theorem}\label{7}

Let $G=C_{2n}(a,b,n)$ be a 5-regular circulant graph, where $n,a$ are even and $b$ is odd. Then $C_{2n}(a,b,n)$ can be factorized as a Cartesian product of a 3-regular and a 2-regular circulant graph using Theorem \ref{fac} iff there exists positive integers $p(>2),q(1<q<n)$ such that $p\mid a$, $ q\mid b $, $pq=2n$ and $ gcd(p,q)=1$. 
\begin{proof}
$(\Rightarrow)$ Let $G=C_{2n}(a,b,n)$ be a 5-regular connected circulant graph, where $n,b$ are odd and $a$ is even. We assume that it can be factorized as a Cartesian product of a 3-regular and a 2-regular circulant graph using Theorem \ref{fac}. Then by Theorem \ref{fac} there exists $p(>1), q(>1)$ with $gcd(p,q)=1$ and $pq=2n$ such that $C_{pq}(T)= C_{p}(R) \square C_{q}(S) $. Here, $R \subseteq [1,\frac{p}{2}]$, $S \subseteq [1,\frac{q}{2}]$ and $T= qR \cup pS=\{a,b,n\}$.

If possible, let \(p = 2\) and \(q = n\). Then \(R = \{1\}\), i.e., \(qR = \{n\}\). 
Since \(p = 2\), all the elements of \(pS\) are even, which contradicts the fact that \(b \in pS\). 
Hence, we must have \(p > 2\) and \(q < n\).  

Now, since \(\gcd(p,q) = 1\), it follows that \(p = 2^{k}n_{1}\) and \(q = n_{2}\), 
where both \(n_{1}\) and \(n_{2} (>1)\) are odd integers.  

Next, we claim that 
$
R = \left\{\tfrac{p}{2}, \tfrac{b}{q}\right\},  S = \left\{\tfrac{a}{p}\right\}.
$

Since \(p\) is even, we have \(b \notin pS\). Now, if  \(n \in pS\) then \(\frac{q}{2}\) must be in $S$. But \(q=n_2\) is odd, hence \(\frac{q}{2}\) is not an integer. So, $n$ can not be in $pS.$ Therefore our claim is true.

Since every element of \(R\) and \(S\) is an integer, it follows that \(p\) divides \(a\) 
and \(q\) divides \(b\).

$(\Leftarrow)$ Let us assume there exists positive integers $p(>2), q(1<q<n)$ such that $p\mid a$, $ q\mid b $, $pq=2n$ and $ gcd(p,q)=1$. We take $R=\{\frac{p}{2},\frac{b}{q}\}$ and $S=\{\frac{a}{p}\}$. Then, $T=qR \cup pS=\{a,b,n\}$. Also, we claim that, $R \subseteq [1,\frac{p}{2}]$, $S \subseteq [1,\frac{q}{2}]$. If possible, suppose \(\tfrac{b}{q} \geq \tfrac{p}{2}\) and \(\tfrac{a}{p} \geq \tfrac{q}{2}\), 
which is equivalent to \(b \geq n\) and \(a \geq n\). 
This contradicts the fact that \(a < n\) and \(b < n\). 
Hence, our claim holds. By Theorem \ref{fac}, \(G\) can be factorized 
as the Cartesian product of \(C_{p}(R)\) and \(C_{q}(S)\).

\end{proof}

\end{theorem}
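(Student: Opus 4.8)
The plan is to follow the same two-part strategy as in Theorem~\ref{6}, since the two statements differ only in the parity of $n$ (even here, odd there). The routine bookkeeping with the Factorization Theorem carries over almost verbatim, so I would concentrate the argument on the $2$-adic analysis, which is the only place where the even value of $n$ genuinely changes the picture. Throughout I apply Theorem~\ref{fac} with $d=1$, so that a factorization of the required type amounts to a coprime splitting $pq=2n$ together with sets $R\subseteq[1,p/2]$ and $S\subseteq[1,q/2]$ satisfying $qR\cup pS=\{a,b,n\}$, where $C_{p}(R)$ is $3$-regular and $C_{q}(S)$ is $2$-regular.

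For the forward direction I would first exclude the degenerate split $p=2$, $q=n$: there $R=\{1\}$ forces $qR=\{n\}$ and hence $b\in pS$, but every element of $pS$ is even while $b$ is odd, a contradiction; thus $p>2$ and $q<n$. Next, since $\gcd(p,q)=1$ and $n$ even forces $4\mid 2n$, all powers of $2$ dividing $2n$ must land in a single modulus; I would label the even one $p=2^{k}n_{1}$ and the odd one $q=n_{2}$, with $n_{1},n_{2}$ odd and $n_{2}>1$. The key placements then follow: because $b$ is odd and $pS$ consists of even numbers, $b\in qR$; and because $n=p\cdot\frac{q}{2}$ with $\frac{q}{2}\notin\mathbb{Z}$ (as $q$ is odd), we get $n\notin pS$, forcing $n\in qR$. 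Hence $qR=\{b,n\}$ and $pS=\{a\}$, i.e. $R=\{\frac{p}{2},\frac{b}{q}\}$ and $S=\{\frac{a}{p}\}$, and integrality of the entries of $R$ and $S$ yields $q\mid b$ and $p\mid a$.

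For the converse I would simply reverse this construction: set $R=\{\frac{p}{2},\frac{b}{q}\}$ and $S=\{\frac{a}{p}\}$, verify $qR\cup pS=\{a,b,n\}$ directly, and check the range constraints $R\subseteq[1,p/2]$, $S\subseteq[1,q/2]$ by observing that $\frac{b}{q}>\frac{p}{2}$ or $\frac{a}{p}>\frac{q}{2}$ would force $b>n$ or $a>n$, contradicting $a,b<n$. Theorem~\ref{fac} then yields $G\cong C_{p}(R)\,\square\,C_{q}(S)$; here the jump $\frac{p}{2}\in R$ is antipodal and contributes degree one, so $C_{p}(R)$ is $3$-regular, while $C_{q}(S)$ with its single non-antipodal jump $\frac{a}{p}<\frac{q}{2}$ is $2$-regular, giving exactly the claimed factorization.

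I expect the main obstacle to be the $2$-adic step in the forward direction: one must argue cleanly that coprimality pushes every factor of $2$ of $2n$ into one modulus, and then rule out $n\in pS$ through the non-integrality of $\frac{q}{2}$. This is precisely where the hypothesis ``$n$ even'' (rather than ``$n$ odd'' as in Theorem~\ref{6}) must be invoked, and correctly pinning down the placement of the odd jump $b$ and of $n$ inside $qR$ is the crux of the whole argument.
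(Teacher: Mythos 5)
Your proposal is correct and follows essentially the same route as the paper's proof: excluding the degenerate split $p=2$, $q=n$ via the parity of $b$, placing all factors of $2$ into the modulus $p$ of the $3$-regular factor, ruling out $n\in pS$ through the non-integrality of $\tfrac{q}{2}$ to pin down $R=\{\tfrac{p}{2},\tfrac{b}{q}\}$ and $S=\{\tfrac{a}{p}\}$, and reversing the construction for the converse. The only (minor, beneficial) additions are your explicit check that $C_{p}(R)$ is $3$-regular and $C_{q}(S)$ is $2$-regular, and your handling of the two range constraints disjunctively rather than jointly.
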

\begin{remark}\label{rem}
  It is well established that all \(3\)-regular (Theorem \ref{3reg}) and \(2\)-regular (Remark \ref{2reg}) circulant graphs are word-representable.  
Therefore, using Theorem \ref{carts}, we conclude that all the 5-regular circulant graphs in Theorems \ref{6} and \ref{7} are word-representable.  

Therefore, combining the above results, we deduce from Theorem \ref{rcarts} that
$
R(G) \leq 5 + \min\{p,q\}
$ for all graphs $G$ mentioned in Theorems \ref{6} and \ref{7}.

\end{remark}

\section{Conclusion and Future Work}\label{con}
We have studied 5-regular circulant graphs with respect to word-representability and established upper bounds for certain subclasses. We now present a conjecture and pose several open problems that provide possible ways to address it.
\begin{conjecture}
    All 5-regular circulant graphs are word-representable.
\end{conjecture}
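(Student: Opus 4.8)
The plan is to reduce the conjecture to a finite family of residue-and-range cases and to settle each by one of the three techniques already developed here: three-colouring (Theorem~\ref{3-col}), morphisms, and factorisation (Theorem~\ref{fac}). First I would apply Theorem~\ref{conn} together with Remark~\ref{even} to restrict attention to connected graphs $G=C_{2n}(a,b,n)$ with $0<a<b<n$ and $\gcd(a,b,n)=1$, and then split on the parities of $a,b,n$. Theorem~\ref{odd} already disposes of $n$ odd with $a\equiv b\pmod 2$, so the remaining parity classes are $n$ odd with $a\not\equiv b\pmod 2$, and $n$ even (the case $a,b,n$ all even being disconnected).

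Next I would exploit the normalisation of Theorem~\ref{15}: whenever $\gcd(a,2n)=1$ or $\gcd(b,2n)=1$, $G$ is isomorphic to some $C_{2n}(x,1,n)$, and the results of Subsections~\ref{col} and~\ref{mor} cover much of the $(n\bmod 3,\,x\bmod 3)$ table, together with the ranges $\tfrac n2\le x\le\tfrac{2n}{3}$ (Corollary~\ref{com}). The task here is bookkeeping: tabulate the nine residue pairs against the admissible values $2\le x\le n-1$, mark those handled by Theorems~\ref{19}--\ref{n2x1} and Corollaries~\ref{cnx1}--\ref{cn1x2}, and close the surviving gaps (for example $n\equiv x\equiv 0\pmod 3$ with $x<\tfrac n2$, or $x>\tfrac{2n}{3}$ in several residue classes) by the same recipe---partition $\mathbb{Z}_{2n}$ into arcs, colour modulo $3$, and, when a small number of vertices are forced into extra colour classes, embed into $K_4$ (Figure~\ref{k4}) or $K_6$ (Figure~\ref{k6}) and check that the unique shortcutting path induces no shortcut in $G$, exactly as in Theorems~\ref{nx2} and~\ref{n1x2}.

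The genuinely new work lies in the \emph{irreducible} graphs, those with $\gcd(a,2n)>1$ and $\gcd(b,2n)>1$, which cannot be put in the form $C_{2n}(x,1,n)$. For these I would first attempt factorisation through Theorems~\ref{6} and~\ref{7}: if there exist coprime $p>2$ and $q$ with $pq=2n$, $p\mid a$, and $q\mid b$, then $G$ splits as the Cartesian product of a $3$-regular and a $2$-regular circulant graph and is word-representable by Remark~\ref{rem}. When no such factorisation exists, the fallback is a direct construction: partition the vertices into intervals whose lengths are commensurate with $a$, $b$, and $n$ so that each of the five jump directions leaves its own class, yielding either a proper $3$-colouring (hence Theorem~\ref{3-col}) or a bounded-chromatic embedding into a small complete graph equipped with a fixed semi-transitive orientation.

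I expect the main obstacle to be exactly this last family---irreducible, non-$3$-colourable, and non-factorable $G$---where one must exhibit an explicit acyclic orientation and certify shortcut-freeness uniformly in $n$. As the proof of Theorem~\ref{n1x2} shows, even a $K_6$ embedding already produces $\binom{6}{4}+\binom{6}{5}+\binom{6}{6}=22$ candidate shortcutting paths, and eliminating each rests on delicate arc-length inequalities; for general $a,b$ the number of colour classes, and hence of candidate shortcuts, may grow, so ad hoc enumeration alone will not close the conjecture. The decisive step will be to replace this enumeration by a structural invariant---an ordering of $\mathbb{Z}_{2n}$ in which every jump edge is consistently oriented and every potential shortcutting edge is forced to join two genuinely adjacent vertices---thereby guaranteeing semi-transitivity in one stroke and word-representability via Theorem~\ref{semi}.
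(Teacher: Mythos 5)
The statement you are asked to prove is stated in the paper as a \emph{conjecture}: the paper itself offers no proof of it, only partial results (Theorems~\ref{odd}--\ref{n2x1}, the morphism results in Subsection~\ref{mor}, and the factorization results in Subsection~\ref{factor}) together with a list of explicitly open cases. Your proposal is, correspondingly, not a proof but a research programme, and it contains a genuine gap at exactly the point where the paper's authors also stop. Your first two paragraphs accurately recapitulate what is already established (reduction to connected graphs, the normalisation $C_{2n}(a,b,n)\cong C_{2n}(x,1,n)$ when $\gcd(b,2n)=1$, the residue-class table modulo $3$, and the range $\tfrac n2\le x\le\tfrac{2n}{3}$ from Corollary~\ref{com}), but the ``bookkeeping'' you describe does not close the table: the cases $n\equiv 1$, $x\equiv 0\pmod 3$ and $n\equiv 2$, $x\equiv 0\pmod 3$, as well as the ranges $1<x<\tfrac n2$ and $\tfrac{2n}{3}<x<n$ in several residue classes, are precisely the configurations the paper lists as open, and saying one would ``close the surviving gaps by the same recipe'' is not an argument --- the recipe in Theorems~\ref{nx2} and~\ref{n1x2} depends on arc-length inequalities such as $x<\tfrac n2$ or $x>\tfrac{2n}{3}$ that fail in the missing ranges, so there is no reason to expect the same partitions and the same $K_4$ or $K_6$ embeddings to remain shortcut-free there.

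The second, larger gap is the family you yourself flag: connected graphs with $\gcd(a,2n)>1$ and $\gcd(b,2n)>1$ that do not satisfy the divisibility hypotheses of Theorems~\ref{6} and~\ref{7}. For these, neither the normalisation of Theorem~\ref{15} nor the Cartesian factorization applies, and your fallback (``partition the vertices into intervals \dots{} yielding either a proper $3$-colouring or a bounded-chromatic embedding'') is a hope, not a construction; indeed Theorem~\ref{thm:new} shows that closely related circulant graphs of higher degree are \emph{not} word-representable, so any argument must use $5$-regularity in an essential way that your outline does not identify. The ``structural invariant'' you invoke in the final paragraph --- an ordering of $\mathbb{Z}_{2n}$ in which every jump edge is consistently oriented and no shortcut arises --- is exactly the missing content of the conjecture, so the proposal as written assumes what it needs to prove. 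In short: the approach is the same as the paper's, but neither you nor the paper completes it, and the conjecture remains open.
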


\begin{problem}
    In Section~\ref{col}, we analyzed the word-representability of $C_{2n}(x,1,n)$ across several cases. A natural continuation of this work is to investigate the remaining configurations, such as $n \equiv 1 \pmod{3}$ with $x \equiv 0 \pmod{3}$, and $n \equiv 2 \pmod{3}$ with $x \equiv 0 \pmod{3}$, among others. Resolving these cases would yield a complete characterization of the class $C_{2n}(x,1,n)$. Furthermore, Upper bounds on the representation number can be obtained using morphisms, particularly for the intervals \(1 < x <\frac{n}{2}\) and \(\frac{2n}{3} < x < n\).

 \end{problem}
 \begin{problem}
     We have studied word-representability of 5-regular circulant graphs through Cartesian products in Section \ref{factor}. An alternative direction can be the factorization of this class via rooted products. For a detailed study of rooted products and their connection with word-representability, we refer the reader to \cite{srinivasan2024minimum}, \cite{kitaev2015words}, and \cite{broere2018word}.
 \end{problem}
\begin{problem}
    As observed from Theorem~\ref{thm:new}, setting \( n = 14 \) yields \( r = 3 \), which in turn 
provides an example of an \(8\)-regular circulant graph, \( C_{14}(3,4,5,6) \), that is not 
word-representable. A natural problem is to determine whether all $6$-regular and 
$7$-regular circulant graphs are word-representable, or whether these classes also contain counterexamples. 

\end{problem}
	\bibliographystyle{splncs04}
 \bibliography{mybibliography}
\section{Appendix}
\subsection{Proof of Theorem 25.}
  \textbf{Case(2):} Here we will show the alternation between $i$ and $(i-1)_{2n}$ in $f(w_i)$.  

Since $\tfrac{n}{2} < x \leq \tfrac{2n}{3}$ and $1<x<n$, we have
\(
(i+x)_{2n} \;\preceq_{w_i}\; (i+n-1)_{2n}\) and
\( 
(i+n)_{2n} \;\preceq_{w_i}\; (i-x-1)_{2n}.
\)
If possible let, $(i+n-1)_{2n} \;\prec_{w_i}\; (i+x)_{2n}$ and $(i-x-1)_{2n} \;\prec_{w_i}\; (i+n)_{2n}$, i.e, $i+n-1<i+x$ and $2n+i-x-1<i+n$. Both of these lead to $n<x+1$, which is a contradiction as $x<n$.  
Hence, for every $0 \leq i \leq 2n-1$, the following ordering holds in $w_i$:  
\[
i \prec_{w_i}(i+1)_{2n}\prec_{w_i}(i+x-1)_{2n}\prec_{w_i}(i+x)_{2n}\prec_{w_i}(i+n-1)_{2n}\preceq_{w_i}(i+n)_{2n}\prec_{w_i}(i-x-1)_{2n}\]\[\hspace{-8.5cm}\preceq_{w_i}(i-x)_{2n}\prec_{w_i}(i-1)_{2n}.\]

By the definition of the morphism $f$, we have four subcases,
\begin{itemize}
    \item $(i+x)_{2n}=(i+n-1)_{2n}$ and $(i+n)_{2n}=(i-x-1)_{2n}$
    \[
f(w_i) 
= \underbrace{f(i)}_{{i},(i-1)_{2n}}\;\;
   \underbrace{f(i+1)}_{i}\cdots
   \underbrace{f(i+x-1)}_{(i-1)_{2n}}\;\;
   \underbrace{f(i+x)}_{i,(i-1)_{2n}}\;\;
   \underbrace{f(i+n)}_{i,(i-1)_{2n},}\;\;
   \underbrace{f(i-x)}_{i}\cdots
   \underbrace{f(i-1)}_{(i+1)_{2n}}.
\]
    \item $(i+x)_{2n}\neq(i+n-1)_{2n}$ and $(i+n)_{2n}=(i-x-1)_{2n}$
    \[
f(w_i) 
= \underbrace{f(i)}_{i,(i-1)_{2n}}\;\;
   \underbrace{f(i+1)}_{i}\cdots
   \underbrace{f(i+x-1)}_{(i-1)_{2n}}\;\;
   \underbrace{f(i+x)}_{i}\cdots
   \underbrace{f(i+n-1)}_{(i-1)_{2n}}\;\;
   \underbrace{f(i+n)}_{i,(i-1)_{2n}}\;\;
   \underbrace{f(i-x)}_{i}\cdots\]\[\hspace{-7.8cm}
   \underbrace{f(i-1)}_{(i-1)_{2n}}.
\]
    \item $(i+x)_{2n}=(i+n-1)_{2n}$ and $(i+n)_{2n}\neq(i-x-1)_{2n}$
    \[
f(w_i) 
= \underbrace{f(i)}_{i,(i-1)_{2n}}
  \underbrace{f(i+1)}_{i}\cdots
   \underbrace{f(i+x-1)}_{(i-1)_{2n}}\;\;
   \underbrace{f(i+x)}_{i,(i-1)_{2n}}\;\;
   \underbrace{f(i+n)}_{i}\cdots
   \underbrace{f(i-x-1)}_{(i-1)_{2n}}\;\;
   \underbrace{f(i-x)}_{i}\cdots\]\[\hspace{-7.4cm}
   \underbrace{f(i-1)}_{(i-1)_{2n}}.
\]
     \item $(i+x)_{2n}\neq(i+n-1)_{2n}$ and $(i+n)_{2n}\neq(i-x-1)_{2n}$
     \[
f(w_i) 
= \underbrace{f(i)}_{i,(i-1)_{2n}}\;\;
   \underbrace{f(i+1)}_{i}\cdots
   \underbrace{f(i+x-1)}_{(i-1)_{2n}}\;\;
   \underbrace{f(i+x)}_{i}\cdots
   \underbrace{f(i+n-1)}_{(i-1)_{2n}}\;\;
   \underbrace{f(i+n)}_{i}\cdots
   \underbrace{f(i-x-1)}_{(i-1)_{2n}}\;\;\]
   \[\hspace{-6.3cm}\underbrace{f(i-x)}_{i}\cdots
   \underbrace{f(i-1)}_{(i-1)_{2n}}.
\]
\end{itemize}

Hence, $i$ and $(i-1)_{2n}$ alternates in $f(w_i)$ for all $0\leq i\leq 2n-1$.  

\textbf{Case(3):} Here we will show the alternation between $i$ and $(i-x)_{2n}$ in $f(w_i)$.    
Since $\tfrac{n}{2} < x \leq \tfrac{2n}{3}$ and $1<x<n$, we have
\( 
(i+1)_{2n} \;\preceq_{w_i}\; (i+n-x)_{2n} 
\),
\(
(i+n-x)_{2n} \;\prec_{w_i}\; (i+x)_{2n}\),
\(
(i+x)_{2n} \;\preceq_{w_i}\; (i-2x)_{2n}\),
\(
(i-2x)_{2n} \;\prec_{w_i}\; (i+n)_{2n}\)
and \(
(i+n)_{2n} \;\prec_{w_i}\; (i-x)_{2n}\). If possible let, $(i+x)_{2n} \;\preceq_{w_i}\; (i+n-x)_{2n}$ and $(i+n)_{2n} \;\preceq_{w_i}\; (i-2x)_{2n}$, i.e, $i+x\leq i+n-x$ and $i+n\leq 2n+i-2x$. Both of these lead to $2x\leq n$, which is a contradiction as $x>\tfrac{n}{2}$. If $(i+n-x)_{2n} \;\prec_{w_i}\; (i+1)_{2n}$ and $(i-x)_{2n} \;\preceq_{w_i}\; (i+n)_{2n}$, i.e, $i+n-x<i+1$ and $2n+i-x\leq i+n$. Both of these lead to a contradiction as $x<n$. Now, if $(i-2x)_{2n} \;\prec_{w_i}\; (i+x)_{2n}$ then $2n+i-2x<i+x$, i.e, $2n<3x$ which is clearly a contradiction as $x\leq \tfrac{2n}{3}$.
Hence, for every $0 \leq i \leq 2n-1$, the following ordering holds in $w_i$:  
\[
i \prec_{w_i}(i+1)_{2n}\preceq_{w_i}(i+n-x)_{2n}\prec_{w_i}(i+x)_{2n}\preceq_{w_i}(i-2x)_{2n}\prec_{w_i}(i+n)_{2n}\prec_{w_i}(i-x)_{2n}\prec_{w_i}(i\]\[\hspace{-11cm}-x+1)_{2n}.
\]

By the definition of the morphism $f$, we have four subcases,
\begin{itemize}
\item $(i+1)_{2n}=(i+n-x)_{2n}$ and $(i+x)_{2n}=(i-2x)_{2n}$
   \[f(w_i) 
= \underbrace{f(i)}_{{i},(i-x)_{2n}}\;\;
   \underbrace{f(i+1)}_{i,(i-x)_{2n}}\cdots
   \underbrace{f(i+x)}_{i,(i-x)_{2n}}\cdots
   \underbrace{f(i+n)}_{i}\cdots
   \underbrace{f(i-x)}_{(i-x)_{2n},i}\;\;
   \underbrace{f(i-x+1)}_{(i-x)_{2n}}.
\]
\item $(i+1)_{2n}\neq(i+n-x)_{2n}$ and $(i+x)_{2n}=(i-2x)_{2n}$
    \[
f(w_i) 
= \underbrace{f(i)}_{{i},(i-x)_{2n}}\;\;
   \underbrace{f(i+1)}_{i}\cdots
   \underbrace{f(i+n-x)}_{(i-x)_{2n}}\cdots
   \underbrace{f(i+x)}_{i,(i-x)_{2n}}\cdots
   \underbrace{f(i+n)}_{i}\cdots
   \underbrace{f(i-x)}_{(i-x)_{2n},i}\;\;
   \underbrace{f(i-x+1)}_{(i-x)_{2n}}
\]
    \item $(i+1)_{2n}=(i+n-x)_{2n}$ and $(i+x)_{2n}\neq(i-2x)_{2n}$
    \[
f(w_i) 
= \underbrace{f(i)}_{{i},(i-x)_{2n}}\;\;
   \underbrace{f(i+1)}_{i,(i-x)_{2n}}\cdots
   \underbrace{f(i+x)}_{i}\cdots
   \underbrace{f(i-2x)}_{(i-x)_{2n}}\cdots
   \underbrace{f(i+n)}_{i}\cdots
   \underbrace{f(i-x)}_{(i-x)_{2n},i}\;\;
   \underbrace{f(i-x+1)}_{(i-x)_{2n}}
\]
    \item $(i+1)_{2n}\neq(i+n-x)_{2n}$ and $(i+x)_{2n}\neq(i-2x)_{2n}$
     \[
f(w_i) 
= \underbrace{f(i)}_{{i},(i-x)_{2n}}\;\;
   \underbrace{f(i+1)}_{i}\cdots
   \underbrace{f(i+n-x)}_{(i-x)_{2n}}\cdots
   \underbrace{f(i+x)}_{i}\cdots
   \underbrace{f(i-2x)}_{(i-x)_{2n}}\cdots
   \underbrace{f(i+n)}_{i}\cdots
   \underbrace{f(i-x)}_{(i-x)_{2n},i}\;\;\]\[\hspace{-7.5cm}
   \underbrace{f(i-x+1)}_{(i-x)_{2n}}
\]
\end{itemize}
Hence, $i$ and $(i-x)_{2n}$ alternates in $f(w_i)$ for all $0\leq i\leq 2n-1$.  

\textbf{Case(4):}
Here we will show the alternation between $i$ and $(i+x)_{2n}$ in $f(w_i)$.    
Since $\tfrac{n}{2} < x \leq \tfrac{2n}{3}$ and $1<x<n$, we have
\( 
(i+x+1)_{2n} \;\preceq_{w_i}\; (i+n)_{2n} 
\),
\(
(i+n)_{2n} \;\prec_{w_i}\; (i+2x)_{2n}\),
\(
(i+2x)_{2n} \;\preceq_{w_i}\; (i-x)_{2n}\) and
\(
(i-x)_{2n} \;\prec_{w_i}\; (i+n+x)_{2n}\). If possible let, $(i+2x)_{2n} \;\preceq_{w_i}\; (i+n)_{2n}$ and $(i+n+x)_{2n} \;\preceq_{w_i}\; (i-x)_{2n}$,i.e, $i+2x\leq i+n$ and $i+n+x\leq 2n+i-x$. Both of these lead to $2x\leq n$, which is a contradiction as $x>\tfrac{n}{2}$. If $(i+n)_{2n} \;\prec_{w_i}\; (i+x+1)_{2n}$, i.e, $i+n<i+x+1$.This leads to a contradiction as $x<n$. Now, if $(i-x)_{2n} \;\prec_{w_i}\; (i+2x)_{2n}$ then $2n+i-x<i+2x$, i.e, $2n<3x$ which is clearly a contradiction as $x\leq \tfrac{2n}{3}$.
Hence, for every $0 \leq i \leq 2n-1$, the following ordering holds in $w_i$:  
\[
i \prec_{w_i}(i+1)_{2n}\prec_{w_i}(i+x)_{2n}\prec_{w_i}(i+x+1)_{2n}\preceq_{w_i}(i+n)_{2n}\prec_{w_i}(i+2x)_{2n}\preceq_{w_i}(i-x)_{2n}\prec_{w_i}(i\]\[\hspace{-11cm}+n+x)_{2n}.
\]

By the definition of the morphism $f$, we have four subcases,
\begin{itemize}
  \item $(i+x+1)_{2n}=(i+n)_{2n}$ and $(i+2x)_{2n}=(i-x)_{2n}$
    \[
f(w_i) 
= \underbrace{f(i)}_{{i},(i+x)_{2n}}\;\;
   \underbrace{f(i+1)}_{i}\cdots
   \underbrace{f(i+x)}_{(i+x)_{2n},i}\;\;
   \underbrace{f(i+x+1)}_{(i+x)_{2n},i}\cdots
   \underbrace{f(i+2x)}_{(i+x)_{2n},i}\cdots
   \underbrace{f(i+n+x)}_{(i+x)_{2n}}.
\]
    \item $(i+x+1)_{2n}\neq(i+n)_{2n}$ and $(i+2x)_{2n}=(i-x)_{2n}$
    \[
f(w_i) 
= \underbrace{f(i)}_{{i},(i+x)_{2n}}\;\;
   \underbrace{f(i+1)}_{i}\cdots
   \underbrace{f(i+x)}_{(i+x)_{2n},i}\;\;
   \underbrace{f(i+x+1)}_{(i+x)_{2n}}\cdots
   \underbrace{f(i+n)}_{i}\cdots
   \underbrace{f(i+2x)}_{(i+x)_{2n},i}\cdots
   \underbrace{f(i+n+x)}_{(i+x)_{2n}}.
\]
    \item $(i+x+1)_{2n}=(i+n)_{2n}$ and $(i+2x)_{2n}\neq(i-x)_{2n}$
    \[
f(w_i) 
= \underbrace{f(i)}_{{i},(i+x)_{2n}}\;\;
   \underbrace{f(i+1)}_{i}\cdots
   \underbrace{f(i+x)}_{(i+x)_{2n},i}\;\;
   \underbrace{f(i+x+1)}_{(i+x)_{2n},i}\cdots
   \underbrace{f(i+2x)}_{(i+x)_{2n}}\cdots
   \underbrace{f(i-x)}_{i}\cdots
   \underbrace{f(i+n+x)}_{(i+x)_{2n}}.
\]
     \item $(i+x+1)_{2n}\neq(i+n)_{2n}$ and $(i+2x)_{2n}\neq(i-x)_{2n}$
     \[
f(w_i) 
= \underbrace{f(i)}_{{i},(i+x)_{2n}}\;\;
   \underbrace{f(i+1)}_{i}\cdots
   \underbrace{f(i+x)}_{(i+x)_{2n},i}\;\;
   \underbrace{f(i+x+1)}_{(i+x)_{2n}}\cdots
   \underbrace{f(i+n)}_{i}\cdots
   \underbrace{f(i+2x)}_{(i+x)_{2n}}\cdots
   \underbrace{f(i-x)}_{i}\cdots\]
   \[\hspace{-7.2cm}\underbrace{f(i+n+x)}_{(i+x)_{2n}}.
\]
\end{itemize}
Hence, $i$ and $(i+x)_{2n}$ alternates in $f(w_i)$ for all $0\leq i\leq 2n-1$.  

\textbf{Case(5):} Here we will show the alternation between $i$ and $(i+n)_{2n}$ in $f(w_i)$.    
Since $\tfrac{n}{2} < x \leq \tfrac{2n}{3}$ and $1<x<n$, we have
\( 
(i+1)_{2n} \;\preceq_{w_i}\; (i+n-x)_{2n} 
\),
\(
(i+n-x)_{2n} \;\prec_{w_i}\; (i+x)_{2n}\),
\(
(i+n+1)_{2n} \;\preceq_{w_i}\; (i-x)_{2n}\) and
\(
(i-x)_{2n} \;\prec_{w_i}\; (i+n+x)_{2n}\). If possible let, $(i+x)_{2n} \;\preceq_{w_i}\; (i+n-x)_{2n}$ and $(i+n+x)_{2n} \;\preceq_{w_i}\; (i-x)_{2n}$, i.e, $i+x\leq i+n-x$ and $i+n+x\leq 2n+i-x$. Both of these lead to $2x\leq n$, which is a contradiction as $x>\tfrac{n}{2}$. If $(i+n-x)_{2n} \;\prec_{w_i}\; (i+1)_{2n}$ and $(i-x)_{2n} \;\prec_{w_i}\; (i+n+1)_{2n}$, i.e, $i+n-x<i+1$ and $2n+i-x< i+n+1$. Both of these lead to a contradiction as $x<n$.
Hence, for every $0 \leq i \leq 2n-1$, the following ordering holds in $w_i$:  
\[
i \prec_{w_i}(i+1)_{2n}\preceq_{w_i}(i+n-x)_{2n}\prec_{w_i}(i+x)_{2n}\prec_{w_i}(i+n)_{2n}\prec_{w_i}(i+n+1)_{2n}\preceq_{w_i}(i-x)_{2n}\]\[\hspace{-10cm}\prec_{w_i}(i+n+x)_{2n}.
\]

By the definition of the morphism $f$, we have four subcases,
\begin{itemize}
  \item $(i+1)_{2n}=(i+n-x)_{2n}$ and $(i+n+1)_{2n}=(i-x)_{2n}$
    \[
f(w_i) 
= \underbrace{f(i)}_{{i},(i+n)_{2n}}\;\;
   \underbrace{f(i+1)}_{i,(i+n)_{2n}}\cdots
   \underbrace{f(i+x)}_{i}\cdots
   \underbrace{f(i+n)}_{(i+n)_{2n},i}\;\;
   \underbrace{f(i+n+1)}_{(i+n)_{2n},i}\cdots
   \underbrace{f(i+n+x)}_{(i+n)_{2n}}.
\]
    \item $(i+1)_{2n}\neq(i+n-x)_{2n}$ and $(i+n+1)_{2n}=(i-x)_{2n}$
    \[
f(w_i) 
= \underbrace{f(i)}_{{i},(i+n)_{2n}}\;\;
   \underbrace{f(i+1)}_{i}\cdots
   \underbrace{f(i+n-x)}_{(i+n)_{2n}}\cdots
   \underbrace{f(i+x)}_{i}\cdots
   \underbrace{f(i+n)}_{(i+n)_{2n},i}\;\;
   \underbrace{f(i+n+1)}_{(i+n)_{2n},i}\cdots\cdots\]\[\hspace{-6.7cm}
   \underbrace{f(i+n+x)}_{(i+n)_{2n}}.
\]
    \item $(i+1)_{2n}=(i+n-x)_{2n}$ and $(i+n+1)_{2n}\neq(i-x)_{2n}$
    \[
f(w_i) 
= \underbrace{f(i)}_{{i},(i+n)_{2n}}\;\;
   \underbrace{f(i+1)}_{i,(i+n)_{2n}}\cdots
   \underbrace{f(i+x)}_{i}\cdots
   \underbrace{f(i+n)}_{(i+n)_{2n},i}\;\;
   \underbrace{f(i+n+1)}_{(i+n)_{2n}}\cdots
   \underbrace{f(i-x)}_{i}\cdots
   \underbrace{f(i+n+x)}_{(i+n)_{2n}}.
\]
    \item $(i+1)_{2n}\neq(i+n-x)_{2n}$ and $(i+n+1)_{2n}\neq(i-x)_{2n}$
     \[
f(w_i) 
= \underbrace{f(i)}_{{i},(i+n)_{2n}}\;\;
   \underbrace{f(i+1)}_{i}\cdots
   \underbrace{f(i+n-x)}_{(i+n)_{2n}}\cdots
   \underbrace{f(i+x)}_{i}\cdots
   \underbrace{f(i+n)}_{(i+n)_{2n},i}\;\;
   \underbrace{f(i+n+1)}_{(i+n)_{2n}}\cdots
   \underbrace{f(i-x)}_{i}\]
   \[\hspace{-6.9cm}\cdots\underbrace{f(i+n+x)}_{(i+n)_{2n}}.
\]
\end{itemize}
Hence, $i$ and $(i+n)_{2n}$ alternates in $f(w_i)$ for all $0\leq i\leq 2n-1$.
\subsection{Proof of Theorem 26.}

\textbf{Case(2):}  Here we will show the alternation between $i$ and $(i-1)_{2n}$ in $f(w_i)$.  
Since $x=\frac{n}{2}$ and $x\geq2$ and $n\geq4$, we have
\(
(i+x)_{2n} \;\prec_ {w_i}\; (i+n-1)_{2n}\) and \((i+n)_{2n} \;\prec_ {w_i}\; (i-x-1)_{2n}.
\)
If possible let, $(i+n-1)_{2n} \;\preceq_{w_i}\; (i+x)_{2n}$ and $(i-x-1)_{2n} \;\preceq_{w_i}\; (i+n)_{2n}$, i.e, $i+n-1\leq i+x$ and $2n+i-x-1\leq i+n$. Both of these lead to $n\leq x+1$, i.e, $n\leq 2$, which is a contradiction.
Hence, for every $0 \leq i \leq 2n-1$, the following ordering holds in $w_i$:  
\[ i \prec_{w_i}(i+1) \prec_{w_i}(i+x-1)_{2n}\prec_{w_i}(i+x)_{2n}\prec_{w_i}(i+n-1)_{2n}\prec_{w_i}(i+n)_{2n}\prec_{w_i}(i-x-1)_{2n}\]\[\hspace{-10.5cm}\prec_{w_i}(i-x)_{2n}.\]
By the definition of the morphism $f$, we have
    \[
f(w_i) 
= \underbrace{f(i)}_{i,( i-1)_{2n}}\;\;
   \underbrace{f(i+1)}_{i}\cdots
   \underbrace{f(i+x-1)}_{(i-1)_{2n}}\;\;
   \underbrace{f(i+x)}_{i}\cdots
   \underbrace{f(i+n-1)}_{ (i-1)_{2n }}\;\;
   \underbrace{f(i+n)}_{i}\cdots
   \underbrace{f(i-x-1)}_{ (i-1)_{2n} }\;\;\]\[\hspace{-7cm}
   \underbrace{f(i-x)}_{i}\cdots
   \underbrace{f(i-1)}_{(i-1)_{2n}}
\]
Hence, $i$ and $(i-1)_{2n}$ alternates in $f(w_i)$ for all $0\leq i\leq 2n-1$.

\textbf{Case(3):} Here we will show the alternation between $i$ and $(i-x)_{2n}$ in $f(w_i)$.
Since $x=\frac{n}{2}$ and $x\geq2$ and $n\geq4$, we have
\(
(i+n)_{2n} \;\prec_ {w_i}\; (i-x)_{2n}\) 
If possible let, $(i-x)_{2n} \;\preceq_{w_i}\; (i+n)_{2n}$, i.e, $2n+i-x\leq i+n$. This leads to $n\leq x$, which is a contradiction.
Hence, for every $0 \leq i \leq 2n-1$, the following ordering holds in $w_i$:  
\[ i \prec_{w_i}(i+1) \prec_{w_i}(i+x)_{2n}\prec_{w_i}(i+n)_{2n}\prec_{w_i}(i-x)_{2n}\prec_{w_i}(i-x+1)_{2n}.\]
By the definition of the morphism $f$, we have
    \[
f(w_i) 
= \underbrace{f(i)}_{i,( i-x)_{2n}}\;\;
   \underbrace{f(i+1)}_{i}\cdots
   \underbrace{f(i+x)}_{(i-x)_{2n},i}\cdots
   \underbrace{f(i+n)}_{(i-x)_{2n},i }\cdots
   \underbrace{f(i-x)}_{ (i-x)_{2n},i}\;\;
   \underbrace{f(i-x+1)}_{ (i-x)_{2n} }
\]
Hence, $i$ and $(i-x)_{2n}$ alternates in $f(w_i)$ for all $0\leq i\leq 2n-1$.  

\textbf{Case(4):} Here we will show the alternation between $i$ and $(i+x)_{2n}$ in $f(w_i)$.
Since $x=\frac{n}{2}$ and $x\geq2$ and $n\geq4$, we have
\(
(i+x+1)_{2n} \;\prec_ {w_i}\; (i+n)_{2n}\) and \(
(i+n)_{2n} \;\prec_ {w_i}\; (i-x)_{2n}\) 
If possible let, $(i+n)_{2n} \;\preceq_{w_i}\; (i+x+1)_{2n}$ and $(i-x)_{2n} \;\preceq_{w_i}\; (i+n)_{2n}$, i.e, $i+n \leq i+x+1$ and $2n+i-x\leq i+n$. The first inequality leads to $n\leq x+1$, i.e, $n\leq 2$, which is a contradiction. Similarly, the second one leads to $n\leq x$, which is also a contradiction.
Hence, for every $0 \leq i \leq 2n-1$, the following ordering holds in $w_i$:  
\[ i \prec_{w_i}(i+1) \prec_{w_i}(i+x)_{2n}\prec_{w_i}(i+x+1)_{2n}\prec_{w_i}(i+n)_{2n}\prec_{w_i}(i-x)_{2n}.\]
By the definition of the morphism $f$, we have
    \[
f(w_i) 
= \underbrace{f(i)}_{i,( i+x)_{2n}}\;\;
   \underbrace{f(i+1)}_{i}\cdots
   \underbrace{f(i+x)}_{(i+x)_{2n},i}\;\;
   \underbrace{f(i+x+1)}_{(i+x)_{2n}}\cdots
   \underbrace{f(i+n)}_{ i,(i+x)_{2n}}\cdots
   \underbrace{f(i-x)}_{ i,(i+x)_{2n} }
\]
Hence, $i$ and $(i+x)_{2n}$ alternates in $f(w_i)$ for all $0\leq i\leq 2n-1$.

\textbf{Case(5):} Here we will show the alternation between $i$ and $(i+n)_{2n}$ in $f(w_i)$.    
Since $ x=\frac{n}{2}$ and $x\geq2$ and $n\geq4$, we have
\(
(i+n+1)_{2n} \;\prec_ {w_i}\; (i-x)_{2n}\) 

If possible let, $(i-x)_{2n} \;\preceq_{w_i}\; (i+n+1)_{2n}$, i.e, $2n+i-x\leq i+n+1$. This leads to $n\leq x+1$, i.e, $n\leq 2$, which is a contradiction.
Hence, for every $0 \leq i \leq 2n-1$, the following ordering holds in $w_i$:  
\[ i \prec_{w_i}(i+1) \prec_{w_i}(i+x)_{2n}\prec_{w_i}(i+n)_{2n}\prec_{w_i}(i+n+1)_{2n}\prec_{w_i}(i-x)_{2n}.\]
By the definition of the morphism $f$, we have
    \[
f(w_i) 
= \underbrace{f(i)}_{i,( i+n)_{2n}}\;\;
   \underbrace{f(i+1)}_{i}\cdots
   \underbrace{f(i+x)}_{(i+n)_{2n},i}\cdots
   \underbrace{f(i+n)}_{ (i+n)_{2n},i}\;\;
\underbrace{f(i+n+1)}_{(i+n)_{2n}}\cdots
   \underbrace{f(i-x)}_{ i,(i+n)_{2n} }
\]
Hence, $i$ and $(i+n)_{2n}$ alternates in $f(w_i)$ for all $0\leq i\leq 2n-1$.

\subsection{Proof of Theorem 27.}
\textbf{Case(2):} Here we will show the alternation between $i$ and $(i-1)_{2n}$ in $f(w_i)$.
Since $n> 3$, for every $0 \leq i \leq 2n-1$, the following ordering holds in $w_i$:  
\[
i \prec_{w_i}(i+1)_{2n}\prec_{w_i}(i+2)_{2n}\prec_{w_i}(i+n-1)_{2n}\prec_{w_i}(i+n)_{2n}\preceq_{w_i}(i-1)_{2n}.\]

By the definition of the morphism $f$, we have,

    \[
f(w_i) 
= \underbrace{f(i)}_{i}\;\;
   \underbrace{f(i+1)}_{(i-1)_{2n}}\;\;
   \underbrace{f(i+2)}_{i}\cdots
   \underbrace{f(i+n-1)}_{(i-1)_{2n}}\;\;
   \underbrace{f(i+n)}_{i}
   \underbrace{f(i-1)}_{(i-1)_{2n}}.
\]
Hence, $i$ and $(i-1)_{2n}$ alternates in $f(w_i)$ for all $0\leq i\leq 2n-1$.

\textbf{Case(3):} Here we will show the alternation between $i$ and $(i+2)_{2n}$ in $f(w_i)$.
Since $n> 3$, for every $0 \leq i \leq 2n-1$, the following ordering holds in $w_i$:  
\[
i \prec_{w_i}(i+2)_{2n}\prec_{w_i}(i+4)_{2n}\preceq_{w_i}(i+n)_{2n}\prec_{w_i}(i+n+2)_{2n}.\]

By the definition of the morphism $f$, we have two subcases,
\begin{itemize}
    \item $(i+4)_{2n}=(i+n)_{2n}$
    \[
f(w_i) 
= \underbrace{f(i)}_{i}\cdots
   \underbrace{f(i+2)}_{(i+2)_{2n},i}\cdots
   \underbrace{f(i+4)}_{(i+2)_{2n},i}\cdots
   \underbrace{f(i+n+2)}_{(i+2)_{2n}}.
\]
     \item $(i+4)_{2n}\neq(i+n)_{2n}$
    \[
f(w_i) 
= \underbrace{f(i)}_{i}\cdots
   \underbrace{f(i+2)}_{(i+2)_{2n},i}\cdots
   \underbrace{f(i+4)}_{(i+2)_{2n}}\cdots
   \underbrace{f(i+n)}_{i}\cdots
   \underbrace{f(i+n+2)}_{(i+2)_{2n}}.
\]
\end{itemize}
Hence, $i$ and $(i+2)_{2n}$ alternates in $f(w_i)$ for all $0\leq i\leq 2n-1$.

\textbf{Case(4):} Here we will show the alternation between $i$ and $(i-2)_{2n}$ in $f(w_i)$.
Since $n>3$, for every $0 \leq i \leq 2n-1$, the following ordering holds in $w_i$:  
\[
i \prec_{w_i}(i+2)_{2n}\preceq_{w_i}(i+n-2)_{2n}\prec_{w_i}(i+n)_{2n}\prec_{w_i}(i-2)_{2n}.\]

By the definition of the morphism $f$, we have two subcases,
\begin{itemize}
    \item $(i+2)_{2n}=(i+n-2)_{2n}$
    \[
f(w_i) 
= \underbrace{f(i)}_{i,(i-2)_{2n}}\cdots
   \underbrace{f(i+2)}_{i,(i-2)_{2n}}\cdots
   \underbrace{f(i+n)}_{i}\cdots
   \underbrace{f(i-2)}_{(i-2)_{2n}}.
\]
     \item $(i+2)_{2n}\neq(i+n-2)_{2n}$
    \[
f(w_i) 
= \underbrace{f(i)}_{i,(i-2)_{2n}}\cdots
   \underbrace{f(i+2)}_{i}\cdots
   \underbrace{f(i+n-2)}_{(i-2)_{2n}}\cdots
   \underbrace{f(i+n)}_{i}\cdots
   \underbrace{f(i-2)}_{(i-2)_{2n}}.
\]
\end{itemize}
Hence, $i$ and $(i-2)_{2n}$ alternates in $f(w_i)$ for all $0\leq i\leq 2n-1$.

\textbf{Case(5):} Here we will show the alternation between $i$ and $(i+n)_{2n}$ in $f(w_i)$.
Since $n> 3$, for every $0 \leq i \leq 2n-1$, the following ordering holds in $w_i$:  
\[
i \prec_{w_i}(i+2)_{2n}\prec_{w_i}(i+n)_{2n}\prec_{w_i}(i+n+2)_{2n}.\]

By the definition of the morphism $f$, we have,

    \[
f(w_i) 
= \underbrace{f(i)}_{i,(i+n)_{2n}}\;\;
   \underbrace{f(i+2)}_{i}\cdots
   \underbrace{f(i+n)}_{(i+n)_{2n},i}\cdots
   \underbrace{f(i+n+2)}_{(i+n)_{2n}}.
\]
Hence, $i$ and $(i+n)_{2n}$ alternates in $f(w_i)$ for all $0\leq i\leq 2n-1$.
\end{document}